\newtheorem{lemma}{Lemma}[section]
\newtheorem{definition}[lemma]{Definition}
\newtheorem{theorem}[lemma]{Theorem}
\newtheorem{corollary}[lemma]{Corollary}
\newtheorem{proposition}[lemma]{Proposition}
\newtheorem{remark}[lemma]{Remark}
\def\RV{{\rm RV}}
\def\N{\mathbb{N}}
\def\F{{\rm Fix}}
\def\E{\mathcal{E}}
\def\RVz{{\rm RV}^0}
\def\B{{\mathbb{B}}}
\def\V{{\mathcal{V}}}
\def\hefun{{\gamma}}
\def\R{{\rm I\!R}}
\def\dist{{\rm dist}}
\def\inf{{\rm inf}}
\def\dom{{\rm dom}}
\newcommand{\RVzp}[1]{{\rm RV}^0_{#1}}
\newcommand{\RVp}[1]{{\rm RV}_{#1}}
\newcommand{\KRV}{{\rm KRV}_{\infty}}
\newcommand{\expCone}{\ensuremath{K_{\exp}}}
\newcommand{\inProd}[2]{\langle #1 , \, #2 \rangle }
\newcommand{\norm}[1]{\|#1\|}
\DeclareMathOperator{\interior}{int}
\DeclarePairedDelimiter\abs{\lvert}{\rvert}%
\title{\sf   Concrete convergence rates for common fixed point problems under Karamata regularity}
\author{Tianxiang Liu\thanks{Institute of Systems and Information Engineering, University of Tsukuba,  Japan. This author was supported in part by ACT-X, Japan Science and Technology Agency (Grant No. JPMJAX210Q) and the JSPS KAKENHI (Grant Number:25K15000). (\href{liutx@sk.tsukuba.ac.jp}{liutx@sk.tsukuba.ac.jp})} \and Bruno F. Louren\c{c}o\thanks{Department of Fundamental Statistical Mathematics, Institute of Statistical Mathematics, Japan. This author was supported partly by the JSPS Grant-in-Aid for Early-Career Scientists  23K16844. (\href{bruno@ism.ac.jp}{bruno@ism.ac.jp})} }
\numberwithin{equation}{section}
\begin{document}
\maketitle

\begin{abstract}
We introduce the notion of Karamata regular operators, which is a notion of regularity 
that is suitable for obtaining concrete convergence rates for common fixed point problems.
This provides a broad framework that includes, but goes beyond, H\"olderian error bounds and H\"older regular operators.
By \emph{concrete}, we mean that the rates we obtain are explicitly expressed in terms of a function of the iteration 
number $k$ instead, of say, a function of the iterate $x^k$. 
While it is well-known that under H\"olderian-like assumptions many algorithms converge 
linearly/sublinearly (depending on the exponent), little it is known when the underlying 
problem data does not satisfy H\"olderian assumptions, which may happen if a problem 
involves exponentials and logarithms. Our main innovation is the usage of the theory 
of regularly varying functions which we showcase by obtaining concrete convergence rates for quasi-cylic algorithms in non-H\"olderian 
settings.  This includes certain rates that are neither sublinear nor linear but sit somewhere in-between, including a case where the rate is expressed via the Lambert W function. 
Finally, we connect our discussion to o-minimal geometry and show that, under mild assumptions, definable operators in any o-minimal structure are always Karamata regular.
\end{abstract}
{\small{\em Keywords:} common fixed point problem; concrete rates; Karamata regularity; quasi-cyclic algorithm;  regular variation; Karamata theory; o-minimal structure.}

\section{Introduction}
In this paper, we consider the following common fixed point problem:
\begin{equation}\label{fcp_prob}
{\rm find}\ \ x\in F\coloneqq \bigcap_{i=1}^m\F\,T_i,
\end{equation}
where each $T_i$ ($i=1,\ldots,m$) is an \emph{$\alpha$-averaged} ($\alpha \in (0,1)$) operator (see definition in Section~\ref{sec:notation}) on a finite dimensional real vector space $\E$. We assume that $F$ is nonempty and $F\neq\E$. 
Many interesting problems can be reformulated as in \eqref{fcp_prob} and two notable examples are convex feasibility problems and certain variational inequality problems.  There are many algorithms for solving \eqref{fcp_prob} and one particularly broad class of methods corresponds to the family of \emph{quasi-cyclic algorithm}, considered in \cite[Theorem~6.1]{BNP15} and further analyzed in \cite{BLT17}.

Our main goal in this paper is to obtain \emph{concrete} convergence rates for quasi-cyclic algorithms for \eqref{fcp_prob}. Here, we emphasize that, by \emph{concrete} we mean that the convergence rate should be given in terms of an explicit function \emph{depending only on the iterate number}. 
More precisely, if we denote the $k$-th iterate of an algorithm by $x^k$ our desired convergence rates should have the form
\[
\dist(x^k,\, F) \leq R(k),
\]
where $R$ is some function of $k$.
We recall that if $R(k) = Mc^{-k}$ for some $c > 1$ and $M > 0$ the convergence rate is often said to be \emph{linear}. If $R(k) = Mk^{-r}$ for some $r > 0$ and $M > 0$ the rate is said to be \emph{sublinear}. 

In order to obtain a concrete converge rate (i.e., to obtain $R(k)$) typically some assumptions on the operators $T_i$ and their fixed point sets are required. 
As far as we know, the only way to obtain $R(k)$ so far is to make use of certain \emph{H\"olderian assumptions} such as assuming that the fixed point sets have a \emph{H\"olderian error bound} and the operators are \emph{H\"older regular} (see \cite{BLT17} or Remark~\ref{rmk_def} below).

A H\"olderian assumption for an operator typically takes the form of asking that over a bounded set, 
some \emph{power} of the residual $\norm{x-Tx}$ should be an upper bound to the true distance between $x$ and the fixed point set of $T$. That is, given a bounded set $B$, there should exist $\rho \in (0,\,1]$ and a constant $\kappa > 0$, such that $\dist(x,\, \F\,T) \leq \kappa \norm{x-Tx}^\rho$ holds, for $x \in B$.
Or this assumption can be taken jointly, by requiring that $\dist(x,\, F) \leq \kappa ({\max_{i=1}^m\norm{x-T_ix}})^\rho$ holds over $B$. In particular, this recovers the notion of H\"olderian error bound when each $T_i$ is a projection onto a given convex set $C_i$.

Under certain H\"olderian assumptions, it was shown in \cite[Theorem~3.1]{BLT17} that the quasi-cyclic algorithm converges at least linearly or sublinearly with a rate whose asymptotic behavior is controlled by  the powers appearing in the assumed H\"olderian conditions.

Here, however, we will consider the problem of obtaining concrete convergence rates when the underlying problem \emph{does not} necessarily satisfy H\"olderian assumptions.
This is motivated by the fact that there are certain problems for which their regularity properties are better expressed under more general conditions. For example, in \cite{LLP20}, it was shown that certain intersections of the exponential cone never admit H\"olderian error bounds \cite[Example~4.20]{LLP20}. Or, even when a H\"olderian error bound holds it may be the case that a tighter error bound can be obtained by making use of a non-H\"olderian error bound as in \cite[Remark~4.14 (a)]{LLP20}.
Other examples were found in \cite[Section~5.1]{LL20} and in the study of error bounds for log-determinant cones \cite{LLLP24}.

A situation where one \emph{can} actually expect H\"olderian assumptions to hold is when the problem data is semialgebraic, thanks to results such as the \L{}ojasiewicz inequality as used in \cite[Proposition~4.1]{BLT17}. 
Unfortunately,  whenever the problem data is related to exponentials and logarithms\footnote{As it is the case if the description of the graph of some operator in \eqref{fcp_prob}  requires exponentials and logarithms, for example. 
	This may happen if an operator is only definable in $o$-minimal structures containing the graph of the exponential function, see Section~\ref{sec: o-minimal} for more details. 
} (as it is in \cite{LLP20} and \cite{LLLP24}) we cannot typically ensure that the underlying operators  satisfy H\"olderian assumptions. 
This boils down to the fact that functions involving exponentials and logarithms are typically not semialgebraic.

As we move away from H\"olderian assumptions, obtaining concrete convergence rates becomes quite challenging. 
Here we should remark that, as discussed extensively in \cite[Section~7.1]{LL20}, certain  important previous works based on the Kurdya-\L{}ojasiewicz property have results concerning convergence rates for certain algorithms under general desingularizing functions (e.g., \cite[Theorem~24]{bdlm10} and \cite[Theorem~14]{BNPS17}), but these results do not lead to concrete rates since they are expressed in terms of the iterates $x^k$ instead of just as functions of $k$. Under KL theory, the cases where one obtains concrete convergence rates are typically restricted to the situation where the desingularizing function is a power function which implies the existence of the so-called \emph{KL exponents}, see more details in \cite{LP18,YLP21}.  
As such, the case of KL exponents can be seen as another kind of H\"older-type assumption.
This is particularly more pronounced in the convex case, in view of results such as \cite[Theorem~5]{BNPS17} as used, say, in \cite[Proposition~4.21]{LLP20} to connect a H\"olderian error bound to the  KL exponent of a certain function and vice-versa. 

Similarly, in the analysis of set-valued mappings and fixed points of operators, several general notions of regularity have been proposed. For example, Ioffe suggested the usage of gauge functions\footnote{One must be careful that gauge functions here are not the same gauge functions considered in, say, \cite[Section~15]{rockafellar}.} to express generalized versions of metric subregularity and other notions, see \cite[Section~2]{Io13}.
This was also used in subsequent works, e.g., \cite{LTT18,LTT18_2}. 
But, again, a challenge that seems to remain is getting concrete convergence rates for algorithms when the considered regularity notion is no-longer H\"olderian.

Part of the difficulty is that no matter how one frames a certain generalized notion of regularity, say through some function $\mu$ satisfying some key inequality, if we wish to obtain a convergence rate for some algorithm, we typically need to sum the effect of $\mu$ over the course of the algorithm and solve a recurrence inequality in $k$ (the iteration number) in order to get a rate. 
Solving general recurrence relations is a notoriously ad-hoc endeavour and this also adds to the difficulty of reasoning about rates beyond the H\"olderian case.

To the best of our knowledge, the first work to show examples of concrete rates under more general regularity conditions in a systematic way was \cite{LL20}, under the framework of \emph{consistent error bound functions}.
In particular, it was shown in \cite[Proposition~6.9]{LL20} that when the alternating projections algorithm is applied to the exponential cone and a certain subspace, it may happen that the rate is given by a function $R(k)$ that is proportional to $\frac{1}{\ln(k)}$. Also, for another choice of subspace the rate is ``almost linear'' in the sense that is faster than any sublinear rate but it may be slower than any linear rate. A key point in \cite{LL20} was the notion of \emph{regularly varying functions} \cite{Se76,BGT87}, a relatively known tool in probability theory but virtually unused in optimization theory. 
Again, to the best of our knowledge, \cite{LL20} was the first work in optimization to make systematic use of regular variation to study convergence rates.

The analysis done in \cite{LL20} concerns only convex feasibility problems and one of our goals here is to extend it to the more general problem class expressed in \eqref{fcp_prob}.
For that, we will introduce the notion of \emph{Karamata regular operator}, which is suitable for working with regular variation techniques. 
We will also deepen our usage of regular variation toolbox and address certain inelegancies and superfluous assumptions in \cite{LL20}. Our results will also  lead to sharper rates than the ones obtained in \cite{LL20}. In particular, for certain convex feasibility problems our rates here will be better than the ones that could be obtained by invoking the results in \cite{LL20}.

The basic idea here is to obtain rates via a series of functional transformations performed onto the regularity function that appears in the definition of Karamata regularity. Then, regular variation will be  useful because it will help us to analyze the asymptotic behaviour as we perform those functional transformations without the need of actually computing some of them. In this way, we will avoid, for instance, the need to evaluate certain hard integrals.

Under Karamata regularity, we will show how concrete convergence rates can be obtained through two different techniques. The first only requires knowledge of the index of regular variation, see Theorem~\ref{theo:rv}. Here we remark that the index of regular variation is a quantity that can be easily obtained from a simple limit computation, provided that we have the underlying regularity function, see Definition~\ref{def:rv}. The second technique requires more work to be applied but leads to tighter rates as we will see in Theorem~\ref{theorem_int} and Section~\ref{sec:examples}.

At the very end, we will connect the notion of Karamata regularity to definable sets in $o$-minimal structures and will show that, under mild assumptions, operators that correspond to definable functions can always be taken to be Karamata regular and are, therefore, under the scope of the techniques described in this paper.

Besides the specific goal of obtaining convergence rates for algorithms for \eqref{fcp_prob} and showing the applicability of Karamata regularity, we hope that this work will also inspire others to investigate other applications of regular variation in optimization, especially when the problem data is not semialgebraic and/or involves exponentials and logarithms.
With this goal in mind we tried to present a survey-like overview of regular variation in Section~\ref{sec:notation} with a view towards optimization applications. 
Also, with the exception of a single result in \cite{BGO06}, we confine all other references to results on regular variation to the classical textbook by Bingham, Goldie and Teugels \cite{BGT87}.

\subsection{Our contributions}

Our contributions are as follows.
\begin{itemize}
\item[-] We introduce the notion of (joint) Karamata regularity for operators in Definition~\ref{def_jcrv}, which is a regularity notion suitable for using tools from regular variation. We then discuss its connections with previous considered notions and prove a calculus rule in Proposition~\ref{prop_psi}.


\item[-] We prove an abstract convergence rate result for quasi-cyclic algorithms for common fixed point problems in Theorem~\ref{theo:rate}. Admittedly, applying directly Theorem~\ref{theo:rate} is hard because it requires \emph{inverting} an already relatively complicated integral. 
However, by making use of regular variation, we show how to bypass the evaluation of the complicated expression in Theorem~\ref{theo:rate}.
This is done through either the index of regular variation  (Theorem~\ref{theo:rv}) of the regularity function 
associated to Karamata regularity or through a sharper result that requires a bit more computation in 
Theorem~\ref{theorem_int}.
Several application examples are given in Section~\ref{sec:examples}, with a focus on cases having non-H\"olderian behavior.


\item[-] We explore the class of Karamata regular operators, and show that continuous quasi-nonexpansive operators defined on an $o$-minimal structure can always be taken to be jointly Karamata regular, see Theorem~\ref{theo:cvo}. As this includes the case of certain large $o$-minimal structures containing the graph of the exponential function, this shows that theory developed in this paper is applicable quite broadly. 
We also explore certain consequences of Theorem~\ref{theo:cvo} and show, for example, that the consistent error bound functions considered in \cite{LL20} can also be taken to be regularly varying, provided that the problem data is definable.
  At the end, we examine the particular case of polynomially bounded $o$-minimal structures and show that convergence rates are at least sublinear in those cases, see Corollary~\ref{col:poly_conv}.

\end{itemize}

This paper is organized as follows. In Section~\ref{sec:notation}, we introduce the notation and discuss the necessary notion from the theory of regular variation. In Section~\ref{sec:ca}, we  introduce the notion of joint Karamata regularity and establish an abstract convergence result for quasi-cyclic algorithms.  With the aid of regular variation, we further study the asymptotic properties of the convergence rates obtained. Later in Section~\ref{sec:examples}, we establish explicit convergence rates under a number of scenarios.
Finally, the class of Karamata regular operators is discussed in Section~\ref{sec: o-minimal} in the context of $o$-minimal structures.

\section{Preliminaries and basic notions from Karamata theory}
\label{sec:notation}
Let $\E$ be a finite-dimensional Euclidean space equipped with an inner product $\inProd{\cdot}{\cdot}$ and a corresponding norm $\norm{\cdot}$.
We denote the ball of radius $r$ centered at the origin  by $\B_r \coloneqq \{x\in\E\mid \|x\|\le r\}$.
Given a closed set $C \subseteq \E$ and $x \in \E$, we denote the projection of $x$ onto $C$  and the distance of $x$ to $C$ by $P_{C}(x)$ and $\dist(x,\,C)$, respectively.

We say an operator $T$ is \emph{$\alpha$-averaged} ($\alpha \in (0,1)$) if there exists a nonexpansive operator $R$ such that $T = (1 - \alpha)I + \alpha R$, where $I$ is the identity operator.
In particular, since the $T_i$'s  in \eqref{fcp_prob} are $\alpha$-averaged, each $T_i$ is nonexpansive and $\F\,T_i$ is convex, thanks to 
 \cite[Remark~4.24 and Proposition~4.13]{BC11}.  
 Moreover, the following property of $\alpha$-averaged operators follows from \cite[Proposition~4.25]{BC11}.

\begin{lemma}[$\alpha$-averaged operator]
\label{lm_alpha_av}
Let $T$ be an $\alpha$-averaged ($\alpha \in (0,1)$) operator on $\E$. Then it satisfies
\begin{equation*}
    \|T(x) - T(y)\|^2 + \frac{1 - \alpha}{\alpha}\big\|(I - T)(x) - (I - T)(y) \big\|^2 \le \|x - y\|^2, \ \ \ \forall\ x,\,y\in\E.
\end{equation*}
\end{lemma}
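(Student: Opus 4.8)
The plan is to unwind the definition of $\alpha$-averagedness and reduce the inequality to a single elementary identity for the squared norm of a convex combination. I would write $T = (1-\alpha)I + \alpha R$ for some nonexpansive operator $R$, fix $x,y \in \E$, and set $u \coloneqq x-y$ and $v \coloneqq R(x)-R(y)$; nonexpansiveness of $R$ then says exactly $\norm{v} \le \norm{u}$.

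First I would rewrite both terms on the left-hand side in terms of $u$ and $v$. From $T=(1-\alpha)I+\alpha R$ we have $T(x)-T(y) = (1-\alpha)u + \alpha v$, and therefore $(I-T)(x)-(I-T)(y) = u - \big((1-\alpha)u+\alpha v\big) = \alpha(u-v)$. Hence $\tfrac{1-\alpha}{\alpha}\norm{(I-T)(x)-(I-T)(y)}^2 = \alpha(1-\alpha)\norm{u-v}^2$, and the entire left-hand side equals $\norm{(1-\alpha)u+\alpha v}^2 + \alpha(1-\alpha)\norm{u-v}^2$.

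The crux is the identity $\norm{\lambda a + (1-\lambda) b}^2 = \lambda\norm{a}^2 + (1-\lambda)\norm{b}^2 - \lambda(1-\lambda)\norm{a-b}^2$, which holds for all $\lambda \in \R$ and $a,b \in \E$ and is checked by expanding both sides through $\inProd{\cdot}{\cdot}$. Taking $\lambda = 1-\alpha$, $a = u$, $b = v$ gives $\norm{(1-\alpha)u+\alpha v}^2 = (1-\alpha)\norm{u}^2 + \alpha\norm{v}^2 - \alpha(1-\alpha)\norm{u-v}^2$; adding back the term $\alpha(1-\alpha)\norm{u-v}^2$ cancels the last summand exactly, so the left-hand side collapses to $(1-\alpha)\norm{u}^2 + \alpha\norm{v}^2$. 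Since $\alpha,\,1-\alpha \ge 0$ and $\norm{v} \le \norm{u}$, this is at most $(1-\alpha)\norm{u}^2 + \alpha\norm{u}^2 = \norm{u}^2 = \norm{x-y}^2$, which is the claim.

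I do not anticipate any real obstacle: the computation is short, and the only thing requiring care is tracking which summand carries $\alpha$ versus $1-\alpha$ so that the $\norm{u-v}^2$ contributions cancel cleanly. Indeed, as the statement already indicates, this is essentially the content of \cite[Proposition~4.25]{BC11}, so an acceptable alternative is simply to invoke that reference directly.
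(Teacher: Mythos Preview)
Your argument is correct: the substitution $u=x-y$, $v=R(x)-R(y)$ together with the convex-combination identity $\norm{\lambda a+(1-\lambda)b}^2=\lambda\norm{a}^2+(1-\lambda)\norm{b}^2-\lambda(1-\lambda)\norm{a-b}^2$ collapses the left-hand side to $(1-\alpha)\norm{u}^2+\alpha\norm{v}^2$, and nonexpansiveness of $R$ finishes it. The paper itself gives no proof at all and simply cites \cite[Proposition~4.25]{BC11}; what you have written is precisely the standard argument behind that proposition, so you are in full agreement with the paper, just more explicit.
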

Next, we introduce some notation and preliminaries on the theory of regular variation, which we will use to conduct convergence analysis of algorithms for solving \eqref{fcp_prob}.  
More details on regular variation can be found in \cite{Se76,BGT87}.
We start with the notion of regularly varying functions. 
\begin{definition}[Regularly varying functions]\label{def:rv}
	A function $f: [a,\,\infty)\rightarrow (0,\,\infty)\, (a > 0)$
	is said to be \emph{regularly varying at infinity} if it is (Lebesgue) measurable and there exists a real number $\rho$ such that for all $\lambda > 0$ we have
	\begin{equation}\label{eq:rv}
	\lim_{x\rightarrow\infty}\frac{f(\lambda x)}{f(x)} = \lambda^{\rho}.
	\end{equation}
	In this case, we write  $f\in \RV_{\rho}$. 
	Similarly, a measurable function $f:(0,\,a]\rightarrow(0,\,\infty)$ is said to be regularly varying at $0$ if there exists a real number $\rho$ such that for all $\lambda > 0$ we have
	\begin{equation}\label{eq:rvz}
	\lim_{x\rightarrow 0_+}\frac{f(\lambda x)}{f(x)} = \lambda^{\rho},
	\end{equation}
	in which case we write $f \in \RVz_{\rho}$. The $\rho$ in \eqref{eq:rv} and \eqref{eq:rvz} is called the \emph{index} of regular variation. 
	
If the limit on the left hand side of \eqref{eq:rv} is $0$, $1$ and $+\infty$ for $\lambda$ in $(0,\,1)$, $\{1\}$ and $(1,\,\infty)$, respectively, then $f$ is said to be a function of \emph{rapid variation of index $\infty$} and 
we write $f \in \RV_{\infty}$. If $1/f \in \RV_{\infty}$, we say that $f$ is a function of \emph{rapid variation of index $-\infty$} and write $f \in \RV_{-\infty}$. $\RVzp{-\infty}$ and $\RVzp{\infty}$ are defined analogously.
\end{definition}
We denote by $\RVz$ the set of
of regularly varying functions at zero with index $\rho \in \R$, i.e., 
$\RVz \coloneqq \cup _{\rho \in \R} \RVz_{\rho}$. 
$\RV$ is defined analogously as $ \cup _{\rho \in \R} \RV_{\rho}$.
The functions in $\RVzp{0},\, \RVp{0}$ are said to be \emph{slowly varying}.
Regular variation at $0$ and at $\infty$ are naturally linked and we will use the following relation several times throughout this paper:
\begin{equation}\label{eq:rv_rvz}
f \in \RVzp{\rho} \quad \Longleftrightarrow \quad 
f(1/\cdot) \in \RVp{-\rho},
\end{equation}
where $\rho \in \R \cup \{-\infty,\,\infty\}$.

We also need some auxiliary definitions.
We say that a nonnegative function  $f$ defined  on a subset $C$ of the real line is \emph{locally bounded} if its restriction to each compact subset $K \subseteq C$ is bounded. If the restriction of $f$ to each compact $K \subseteq C$ satisfies $\inf_{t\in\, K}f(t) > 0$, then we say that $f$ is \emph{locally bounded away from zero}. Finally, we say that $f$ \emph{locally integrable}, if $\int _{K} f$ is finite for each compact $K \subseteq C$. 

An important fact is that we can always adjust the domain in order to ensure local boundedness. More precisely, if $f:[a,\,\infty) \to (0,\,\infty)$ belongs to $\RV$, then there exists $b \geq a$ such that the restriction of $f$ and $1/f$ to $[b,\,\infty)$ are both locally bounded, see \cite[Corollary~1.4.2]{BGT87}.

Analogously, if $f:(0,\,a] \to (0,\,\infty)$ belongs to $\RVz$, then $1/f(1/\cdot): [1/a,\,\infty)\to (0,\,\infty)$ belongs to $\RV$ by \eqref{eq:rv_rvz}. Then there exists some $b\in(0,\,a]$ (hence $1/b\ge 1/a$) such that the restriction of $1/f(1/\cdot)$ to $[1/b,\,\infty)$ is locally bounded. This implies that $f$ is locally bounded away from zero over  $(0,\,b]$. 

We note that if $f$ is a positive function on $C$ and it is monotone (either nondecreasing or nonincreasing) then it is both \emph{locally bounded} and \emph{locally bounded away from zero}.
For the sake of preciseness, we emphasize that $f$ is nondecreasing (resp.~increasing) if 
$f(t_1) \leq f(t_2)$ (resp.~$f(t_1) < f(t_2)$) holds when $t_1,\,t_2 \in C$ satisfies $t_1 < t_2$.
Nonincreasing/decreasing are defined analogously.

\paragraph{Calculus rules.}
For $f_1 \in \RVp{\rho_1},\, f_2 \in \RVp{\rho_2}$ with $\rho_1,\,\rho_2,\,\alpha \in \R$ we have the following calculus rules, see \cite[Proposition~1.5.7]{BGT87}:
\begin{align}\label{eq:rv_calc}
f_1f_2 \in \RVp{{\rho_1+\rho_2}},\quad f_1+f_2 \in \RVp{\max\{\rho_1,\,\rho_2\}},\quad f_1^\alpha \in \RVp{\alpha\rho_1}, \quad f_1\circ f_2 \in \RVp{\rho_1\rho_2},
\end{align}
where the last relation requires the additional hypothesis that 
$f_2(x) \to \infty$ as $x \to \infty$.
From \eqref{eq:rv_rvz} and \eqref{eq:rv_calc} we see that if $f_1 \in \RVzp{\rho_1},\, f_2 \in \RVzp{\rho_2}$, then:
\begin{align}\label{eq:rvz_calc}
f_1f_2 \in \RVzp{{\rho_1+\rho_2}},\quad f_1+f_2 \in \RVzp{\min\{\rho_1,\,\rho_2\}},\quad f_1^\alpha \in \RVzp{\alpha\rho_1}, \quad f_1\circ f_2 \in \RVzp{\rho_1\rho_2},
\end{align}
where the last relation requires the additional hypothesis that 
$f_2(x) \to 0$ as $x \to 0_+$. 

\begin{remark}[About the function domain and image]\label{rem:dom}
The literature on regular variation treats the domain of functions in a somewhat loose fashion. If $f:[a,\,\infty) \to (0,\,\infty)$ is in $\RV$, we can freely restrict $f$ to $[c,\,\infty)$ ($c > a$) or extend $f$ to $[b,\,\infty)$ ($b < a$) by letting $f$ take arbitrary positive values on $[b,\,a)$. More extremely, we can change the value of $f$ in a single bounded interval $[b,\,c]$ and none of these operations would change the asymptotic properties of $f$ at infinity nor the index of regular variation of $f$. So the calculus rules in \eqref{eq:rv_calc} (and much of this paper, in fact) should be seen under this light: while $f_1,\,f_2 \in \RV$ might have different domains of definition, we can restrict/extend their domains until $f_1+f_2$,\, $f_1f_2,\, f_1 \circ f_2$  are well-defined.
A similar comment applies to regular variation at $0$ so that if $f \in \RVz$ is defined over $(0,\,a]$, we can  arbitrarily change the value of $f$ in a single interval of 
the form $[b,\,c]$ with $b > 0$ and $c \in (b,\,\infty)\cup \{\infty\}$.

Due to aforementioned flexibility of restricting the function domain, we also treat the image of functions in a loose fashion. For a function $f$ whose image falls out of $(0,\,\infty)$, we still write $f\in\RV$ or $f\in\RVz$, if there exists some $a > 0$ such that the image of the restriction $f|_{[a,\,\infty)}$ or $f|_{(0,\,a]}$ is contained in $(0,\,\infty)$, respectively. Again, this is because only the asymptotic property of $f$ at infinity or $0$ matters.
\end{remark}

\begin{remark}[Examples of regularly varying functions]
	The function  mapping a nonnegative $x$ to $x^\rho$ belongs to both $\RV_{\rho}$ and $\RVz_{\rho}$.
With Remark~\ref{rem:dom} in mind, the natural logarithm function $\ln(\cdot)$ is in $\RV_{0}$ as it satisfies \eqref{eq:rv} and $\ln(x)$ is positive for $x > 1$, but it is not in $\RVz$, since $\ln(x)$ is negative for $x$ close to zero. 
	On the other hand, $-\ln(\cdot)$ is indeed a function in $\RVz_0$, as it satisfies \eqref{eq:rvz} and $-\ln(x)$ is positive for sufficiently small $x > 0$. 
	In view of \eqref{eq:rvz_calc}, 
	the function mapping a positive $x$ to $-x^\rho\ln(x)$ belongs to $\RVz_{\rho}$.
	Finally, the function mapping $x$ to $e^x$ is an example of a function in $\RV_{\infty}$.
	
\end{remark}

\paragraph{Potter's bounds.}
A great deal of information on the asymptotic behavior of a function can be extracted from its index of regular variation. A result known as \emph{Potter bounds} states that if $f \in \RVp{\rho}$, then for every $A > 1,\, \epsilon > 0$, there exists a constant $M$ such that 
$x \geq M,\, y \geq M$ implies
\begin{equation}\label{eq:potter_rv}
\frac{f(x)}{f(y)} \leq A\max\left \{\left(\frac{x}{y}\right)^{\rho -\epsilon}, \left(\frac{x}{y}\right)^{\rho +\epsilon}  \right\},
\end{equation}
see \cite[Theorem~1.5.6]{BGT87}.
Now, if $f \in \RVzp{\rho}$, then $\hat f$ such that $\hat f(t) \coloneqq 1/f(1/t)$ belongs to $\RVp{\rho}$, by \eqref{eq:rv_rvz} and \eqref{eq:rv_calc}. Applying \eqref{eq:potter_rv}, we see that
for any $A > 1,\, \epsilon > 0$, there exists a constant $M$ such that 
\begin{equation}\label{eq:potter}
\frac{f(t)}{f(s)} \leq A\max\left \{\left(\frac{t}{s}\right)^{\rho -\epsilon}, \left(\frac{t}{s}\right)^{\rho +\epsilon}  \right\},
\end{equation}
whenever $t \leq M,\, s\leq M$.
We note that taking $\epsilon = |\rho|/2$ in \eqref{eq:potter_rv}, fixing $x$ (if $\rho > 0$) or $y$ (if $\rho < 0)$ and taking limits, leads to the following conclusions:
\begin{align}
f\in \RVp{\rho} \text{ and } \rho > 0 \quad& \Rightarrow\quad   \lim _{x \to \infty} f(x) = +\infty, \label{eq:rv_inf} \\
f\in \RVp{\rho} \text{ and } \rho < 0 \quad& \Rightarrow\quad   \lim _{x \to \infty} f(x) = 0, \label{eq:rv_inf2} 
\end{align}
see also \cite[Proposition~1.3.6, item~($v$)]{BGT87}. Similarly, we have
\begin{align}
f\in \RVz_{\rho} \text{ and } \rho > 0 \quad& \Rightarrow\quad   \lim _{x \to 0_+} f(x) = 0, \label{lim_rv0_pos} \\
f\in \RVz_{\rho} \text{ and } \rho < 0 \quad& \Rightarrow\quad   \lim _{x \to 0_+} f(x) = +\infty. \label{lim_rv0_neg} 
\end{align}
There is also an analogous result for rapidly varying function. Bingham, Goldie and Omey proved that if $f \in \RVp{-\infty}$, then given any $r > 0$ there exists a constant $M > 0$ such that $x \geq M$ implies
\begin{equation}\label{eq:potter_rapid}
f(x) \leq x^{-r},
\end{equation}
see \cite[Lemma~2.2]{BGO06}, in particular $f(x) \to 0$ as $x \to \infty$. 
Therefore, if 
$f \in \RVp{\infty}$ (i.e., $1/f \in  \RVp{-\infty}$) then, there exists $M > 0$ such that $x \geq M$ implies
\begin{equation}\label{eq:potter_rapid2}
x^r \leq f(x),
\end{equation}
in particular $f(x) \to \infty$ as $x \to \infty$. 
\paragraph{Generalized inverses.}
Let $f:[a,\,\infty) \to \R$ be such that $f(x)$ tends to $\infty$ as $x \to \infty$. Then, we define its ``arrow'' generalized inverse as
\begin{equation}\label{eq:arrow}
f^{\leftarrow}(y) \coloneqq \inf \{ x \in [a,\,\infty)\mid  f(x) > y\},    
\end{equation}
see \cite[equation~(1.5.10)]{BGT87}. The function $f^{\leftarrow}$ is
nondecreasing and well-defined over $(0,\,\infty)$ even if the usual inverse of $f$ does not exist as it may happen if $f$ is nondecreasing but not necessarily increasing.
We will see shortly in Proposition~\ref{lb_proposition} that when $f^{-1}$ does in fact exist, it coincides with  $f^{\leftarrow}$ under suitable assumptions.

Similarly, for a function $f:(0,\,a]\to\R$ with $\lim_{x\to 0_+}f(x) = 0$, we define its ``minus'' generalized inverse as
\begin{equation}\label{inv_fun}
\begin{split}
    f^{-}(y)& : =  \sup \{ x \in (0,\,a] \mid   f(x) < y\},
\end{split}
\end{equation}
which is well-defined over $(0,\,\infty)$. Before we proceed, we note the following useful properties of the minus inverse.  
\begin{lemma}\label{inv_lemma}
	Let $f: (0,\,a]\to (0,\,\infty)$ satisfy $\lim _{x\to 0_+}f(x) = 0$. Then $f^{-}$ is  nondecreasing. If $f$ is  nondecreasing, then $0 < s\le f(t)$ implies that $f^{-}(s)\le t$.  
\end{lemma}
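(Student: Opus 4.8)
The statement has two parts: (1) $f^{-}$ is nondecreasing, and (2) if $f$ is nondecreasing, then $0 < s \le f(t)$ implies $f^{-}(s) \le t$. I would prove them directly from the definition $f^{-}(y) = \sup\{x \in (0,\,a] \mid f(x) < y\}$ in \eqref{inv_fun}.

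For part (1), the plan is to observe that if $s_1 \le s_2$, then the defining sets are nested: $\{x \in (0,\,a] \mid f(x) < s_1\} \subseteq \{x \in (0,\,a] \mid f(x) < s_2\}$, since $f(x) < s_1 \le s_2$. Taking suprema preserves the inequality, so $f^{-}(s_1) \le f^{-}(s_2)$. The only subtlety is the edge case where one of the sets is empty; but since $\lim_{x\to 0_+}f(x) = 0$, for any $y > 0$ there exists $x$ small enough with $f(x) < y$, so all the relevant sets are nonempty and the suprema are well-defined (possibly equal to $a$). This makes $f^{-}$ a genuine nondecreasing function on $(0,\,\infty)$, consistent with the remark after \eqref{inv_fun}.

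For part (2), suppose $f$ is nondecreasing and $0 < s \le f(t)$. The plan is to show that every element $x$ of the set $\{x \in (0,\,a] \mid f(x) < s\}$ satisfies $x \le t$, which then forces the supremum $f^{-}(s)$ to be at most $t$. Indeed, if $x > t$ held for some such $x$, then monotonicity of $f$ would give $f(x) \ge f(t) \ge s$, contradicting $f(x) < s$. Hence the set is contained in $(0,\,t]$, and therefore $f^{-}(s) = \sup\{x \mid f(x) < s\} \le t$.

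There is essentially no hard part here; the argument is a routine unpacking of the definition of the minus generalized inverse together with monotonicity. The only point requiring a small amount of care is handling the possibility of empty defining sets, which is ruled out by the hypothesis $\lim_{x\to 0_+}f(x) = 0$, guaranteeing that for each $y>0$ some $x$ near $0$ satisfies $f(x)<y$; this ensures all suprema in the proof are taken over nonempty subsets of $(0,\,a]$ and hence lie in $(0,\,a]$.
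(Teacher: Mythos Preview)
Your proposal is correct and follows essentially the same approach as the paper: both argue directly from the definition of $f^{-}$, using set inclusion for monotonicity and using the monotonicity of $f$ to bound elements of the defining set by $t$ in the second part. Your treatment is slightly more detailed (you spell out the nonemptiness of the defining sets via $\lim_{x\to 0_+}f(x)=0$, which the paper handles implicitly in the remark after \eqref{inv_fun}), but the substance is identical.
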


\begin{proof}
	The monotonicity of $f^{-}$ follows directly from its definition. 
	Next, let $0 < s\le f(t)$ and suppose that $f$ is nondecreasing. For all $y\in(0,\,a]$ satisfying $f(y) < s$, we have $f(y) < s \le f(t)$. 
	Then, the monotonicity of $f$ implies $y < t$. 
	Therefore, $f^{-}(s) =  \sup \{ y \in (0,\,a]\mid   f(y) < s\} \le t$. This completes the proof.
\end{proof}



The two generalized inverses are related as follows. Suppose that $f:(0,\,a]\to(0,\,\infty)$ satisfies $\lim_{x\to 0_+}f(x) = 0$. Let $g \coloneqq 1/f(1/\cdot)$. Then $g(x)$ tends to $\infty$ as $x\to\infty$. Moreover,
\begin{equation}\label{eq:rvz_inv_def}
\begin{split}
   f^{-}(y) & = \sup \{ x \in (0,\,a] \mid   f(x) < y\} = \frac{1}{\inf \{ u \in [1/a,\,\infty) \mid  f(1/u) < y\}}\\
  & =  \frac{1}{\inf \{ u \in [1/a,\,\infty) \mid   g(u) > 1/y\}} = \frac{1}{g^
{\leftarrow}(1/y)}.  
\end{split}
\end{equation}
In the spirit of Remark~\ref{rem:dom}, we will observe that under local boundedness,  the value of the generalized inverse $f^{\leftarrow}(y)$ does not depend on $a$ for sufficiently large $y$. Also, it may happen that  a non-monotone function $f(y)$ is increasing and continuous for sufficiently large $y$.
Nevertheless, this will still be enough to conclude that the generalized inverse will eventually coincide with the usual inverse.
For the sake of preciseness, in what follows, given a function $f: C \to \R$ and $S \subseteq \R$ we say that \emph{$f^{-1}$ is well-defined over $S$} if for every $y \in S$ there exists a unique $x \in C$ such that $f(x) = y$ holds. In this case, for $y \in S$, we can define $f^{-1}(y) \coloneqq x$ without ambiguity.

\begin{proposition}\label{lb_proposition}
	Suppose that $f: [a,\,\infty)\to\R$ is locally bounded and $f(x)\to\infty$ as $x\to\infty$. Let $b \geq a$, $\widehat{f}:= f|_{[b,\,\infty)}$ and  $M \coloneqq \sup_{x \in [a,\,b]} f(x)$. Then $f^{\leftarrow}(y) = \widehat{f}^{\leftarrow}(y)$  holds for $y\geq M$. In addition, if $f$ is continuous and increasing on $[b,\,\infty)$, then $f^{-1}$ is well-defined over $(M,\, \infty)$ and $f^{-1}(y) = f^{\leftarrow}(y)$ holds for $y> M$.
\end{proposition}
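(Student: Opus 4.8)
The plan is to unwind the definition \eqref{eq:arrow} of the arrow inverse and exploit that $f$ agrees with $\widehat f$ on $[b,\infty)$ while being bounded above by $M$ on $[a,b]$. First I would fix $y \geq M$ and compare the two sets $A \coloneqq \{x \in [a,\infty)\mid f(x) > y\}$ and $\widehat A \coloneqq \{x \in [b,\infty)\mid f(x) > y\}$ whose infima are, by definition, $f^{\leftarrow}(y)$ and $\widehat f^{\leftarrow}(y)$. Clearly $\widehat A \subseteq A$, so $f^{\leftarrow}(y) \leq \widehat f^{\leftarrow}(y)$. For the reverse, note that $A \setminus \widehat A \subseteq [a,b]$, but for $y \geq M$ no point $x \in [a,b]$ satisfies $f(x) > y$ (since $f(x) \leq M \leq y$ there); hence actually $A = \widehat A$ and the two infima coincide, giving $f^{\leftarrow}(y) = \widehat f^{\leftarrow}(y)$ for all $y \geq M$. (One should also check $A$ is nonempty, which holds because $f(x)\to\infty$; this is what makes $f^{\leftarrow}(y)$ finite.)

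For the second assertion, assume in addition that $\widehat f = f|_{[b,\infty)}$ is continuous and increasing, and fix $y > M$. Since $f$ is locally bounded and $f(x)\to\infty$, the image $\widehat f([b,\infty))$ is, by the intermediate value theorem and monotonicity, an interval of the form $[\widehat f(b),\infty)$; and because $\widehat f(b) \leq M < y$, there is a point in $[b,\infty)$ mapping to $y$, unique by strict monotonicity. Combined with the fact (just shown) that on $[a,b]$ the function never reaches the value $y$, this shows $f^{-1}$ is well-defined over $(M,\infty)$. To identify $f^{-1}(y)$ with $f^{\leftarrow}(y)$, let $x_0 \coloneqq f^{-1}(y) = \widehat f^{-1}(y)$. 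For $x \in [b,\infty)$ with $x > x_0$ we have $\widehat f(x) > \widehat f(x_0) = y$, and for $x \leq x_0$ we have $\widehat f(x) \leq y$ (by monotonicity, with equality only at $x_0$); so $\widehat A = \{x \mid \widehat f(x) > y\} = (x_0,\infty)$, whose infimum is $x_0$. Hence $f^{\leftarrow}(y) = \widehat f^{\leftarrow}(y) = x_0 = f^{-1}(y)$ for $y > M$.

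I do not expect any serious obstacle here; this is essentially a careful bookkeeping argument around the $\inf$ in the definition. The one point that deserves attention is the precise role of the strict inequality in \eqref{eq:arrow}: it is what forces $\widehat A = (x_0,\infty)$ rather than $[x_0,\infty)$ in the monotone case, yet the infimum is still $x_0$, so the ``$>$'' versus ``$\geq$'' discrepancy is harmless. A second minor subtlety is making sure the hypotheses (local boundedness on $[a,\infty)$ together with $f(x)\to\infty$) genuinely guarantee $M = \sup_{[a,b]} f < \infty$ so that the threshold is finite — but this is immediate since $[a,b]$ is compact and $f$ is locally bounded. Everything else follows from set inclusions and the order structure.
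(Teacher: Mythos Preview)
Your proposal is correct and follows essentially the same approach as the paper's proof: both arguments reduce to showing that the defining sets $A$ and $\widehat A$ coincide for $y\ge M$ (because $f\le M$ on $[a,b]$ forces any $x$ with $f(x)>y$ to lie in $[b,\infty)$), and then, under the extra continuity/monotonicity hypothesis, both identify $\widehat A$ with the half-line $(f^{-1}(y),\infty)$ via the intermediate value theorem and strict monotonicity. The only cosmetic difference is that the paper frames the second half via set equalities and you phrase it via the image $[\widehat f(b),\infty)$, but the logical content is identical.
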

\begin{proof}
	We first observe that $M$ is finite because $f$ is locally bounded.
	Also, for any $y \ge  {M}$, if $f(x) > y$ holds we must have $x > b$, which together with $\widehat{f}:= f|_{[b,\,\infty)}$ implies the inclusion and the equality respectively:
	\[
	\left\{x\in [a,\,\infty) \mid f(x) > y\right\} \subseteq \left\{x\in [b,\,\infty) \mid f(x) > y\right\} = \{x\in [b,\,\infty) \mid \widehat{f}(x) > y\}.
	\]
	Since $b \geq a$, the converse inclusion $\left\{x\in [b,\,\infty) \mid f(x) > y\right\} \subseteq \left\{x\in [a,\,\infty) \mid f(x) > y\right\}$ holds, which together with the above relation implies that $\left\{x\in [a,\,\infty) \mid f(x) > y\right\} = \{x\in [b,\,\infty) \mid \widehat{f}(x) > y\}$. In view of the definition of the arrow inverse, we then have $f^{\leftarrow}(y) = \widehat{f}^{\leftarrow}(y)$. 
	This proves the first half.

	Now, we show the remaining half, where we assume that $f$ is continuous and increasing on $[b,\, \infty)$.
	If $x_1,x_2$ and $y > M$ are such that $f(x_1) = y = f(x_2)$ holds, then, by the definition of 
	$M$, we must have $x_1, x_2 \in (b,\, \infty)$. Since the restriction of $f$ to 
	$[b,\,\infty)$ is increasing, this implies that $x_1 = x_2$. Additionally, since $f$ is continuous and 
	goes to $\infty$ as $x\to \infty$, for every $y > M$ there exists at least one $x$ satisfying $f(x) = y$, which is a consequence of $M \geq f(b)$ and the intermediate value theorem. 
	We conclude that $f^{-1}$ is well-defined over $(M,\, \infty)$.
	
	Finally, let $y > M$ be arbitrary. 	If $f(x) > y$ holds, then $x > b$ holds. Also, since $f(f^{-1}(y)) = y > M$, 
	we have  $f^{-1}(y) > b$ too. So $f(x) > y = f(f^{-1}(y))$ implies $x > f^{-1}(y) $, since 
	$f$ is increasing on $(b, \, \infty)$. Therefore, we have the inclusion
	\[
	 \{x \in [a,\,\infty) \mid f(x) > y \} \subseteq \{x \in [b,\,\infty) \mid x > f^{-1}(y) \}.
	\]
	However, if $x \in [b,\,\infty)$ and $x > f^{-1}(y)$ holds, since $f$ is increasing on $[b,\,\infty)$, 
	we have $f(x) > f(f^{-1}(y)) = y$. Therefore, both sets coincide and we have
	\[
	f^{\leftarrow}(y) = \inf \{x \in [a,\,\infty) \mid f(x) > y \} = \inf \{x \in [b,\,\infty) \mid x > f^{-1}(y) \} = f^{-1}(y). 
	\]
This completes the proof. 
\end{proof} 
We observe that in the second half of the proof of Proposition~\ref{lb_proposition}, if $a = b$ holds, then $f^{-1}$ is also well-defined at $f(b)$ and a direct computation shows that $f^{\leftarrow}(f(b)) = b = f^{-1}(f(b))$. 
In particular, if $f:[a,\,\infty) \to (0,\,\infty)$ is continuous, increasing and satisfies $f(x) \to \infty$ as $x\to\infty$, then $f^{-1} = f^{\leftarrow}$ holds over $[f(a),\,\infty)$. Similarly, if $f:(0,\,a] \to (0,\,\infty)$ is continuous, increasing and satisfies $f(x) \to 0$ as $x \to 0_{+}$, then $f^{-1} = f^{-}$ holds over $(0,\,f(a)]$.

Moving on, an important result is that if $f$ is locally bounded on $[a,\,\infty)$ then:
\begin{equation}\label{eq:rv_inv}
f \in \RVp{\rho},\,\, \rho > 0 \quad \Rightarrow \quad f^{\leftarrow} \in \RVp{1/\rho},
\end{equation}
see \cite[Theorem~1.5.12]{BGT87} and this footnote\footnote{For an example of what can go awry if $f$ is not locally bounded, let $f:[1,\,\infty) \to (0,\,\infty)$ be such that $f(t) \coloneqq t$, for $t \geq 2$ and 
	$f(t)\coloneqq 1/(2-t)$ for $t \in [1,\,2)$. We have $f \in \RV_{1}$, but $f^{\leftarrow}(x) = \inf\{y \in [1,\,\infty) \mid f(y) > x\}$ does not go to $\infty $ as $x \to \infty$, so, in particular, $f^{\leftarrow} \not \in \RV_{1}$ (in view of \eqref{eq:rv_inf}). Still, we can always adjust the domain of a regularly varying function in order to ensure local boundedness as discussed previously. In this case, it is enough to restrict $f$ to $[2,\,\infty)$.}.


In order to describe the behavior of the arrow inverse when the index is $0$, we need an extra definition.
\begin{definition}\label{def:krv}
A positive measurable function $f$ belongs to the class of Karamata rapidly varying functions (denoted by $\KRV$) if and only if $f$ can be restricted or extended to the interval $[1,\,\infty)$ in such a way that
\[
f(x) = \exp\left\{z(x) + \eta(x) + \int_{1}^x \xi(t) \frac{dt}{t} \right\}, \quad x \geq 1,
\]
where $z,\,\eta,\,\xi$ are measurable functions such that $z$ is nondecreasing, $\eta(x) \to 0$ and $\xi(x) \to \infty$ as $x \to \infty$.
\end{definition}
The definition of $\KRV$ in \cite[Section~2.4]{BGT87} uses the so-called \emph{Karamata indices}, but thanks to \cite[Theorem~2.4.5]{BGT87} we can  equivalently use Definition~\ref{def:krv}.
We have the following implications:
\begin{align}
f \in \KRV \quad & \Rightarrow \quad f \in \RVp{\infty} \label{eq:krv},\\
f \in \RVp{\infty} \text{ and } f \text{ is  nondecreasing } \quad & \Rightarrow\quad f \in \KRV \label{eq:krv_mon},\\
f \in \KRV,\, g(x) \coloneqq x^\alpha,\, \alpha \in \R\quad  & \Rightarrow \quad gf \in \KRV, \label{eq:krv_power}
\end{align}
where \eqref{eq:krv} and \eqref{eq:krv_mon} follows from \cite[Proposition~2.4.4, item(iv)]{BGT87}. We now check \eqref{eq:krv_power}. Writing $g(x)$ as $\exp\left\{\int _{1}^x \alpha/t dt\right\}$, we see that $gf$ admits a representation as in Definition~\ref{def:krv} where $\xi(t) + \alpha$ appears instead of just $\xi(t)$. Since $\xi(t) +\alpha$ still goes to $\infty$ as $x \to \infty$, this shows that $gf \in \KRV$.

With that we have the following results. If $f$ is locally bounded and $f(x)$ goes to $\infty$ as $x \to \infty$, then 
\begin{align}
   f \in \RVp{0} \quad& \Rightarrow \quad f^{\leftarrow} \in \KRV, \label{eq:rav_inv}\\
   f \in \RVp{\infty} \quad& \Rightarrow \quad f^{\leftarrow} \in \RVp{0}, \label{eq:rav_inv2}
\end{align}
see \cite[Theorem~2.4.7]{BGT87}.

For regular varying functions at $0$ it will be more convenient to use the minus inverse. 
Suppose that $f:(0,\,a] \to (0,\,\infty) \in \RVzp{\rho}$ is such that $f(x)$ goes to $0$ as $x\to 0_+$, $\rho \geq 0$ and $f$ is locally bounded away from zero. Then, $g \coloneqq 1/f(1/\cdot)$ belongs to $\RVp{\rho} $, $g(t)$ goes to $\infty$ as $t \to \infty$ and $g$ is locally bounded on its domain 
$[1/a,\,\infty)$. 
This together with \eqref{eq:rvz_inv_def}, \eqref{eq:rv_inv} and \eqref{eq:rav_inv} allows us to conclude that if $f$ is bounded away from zero, then 
\begin{equation}\label{eq:rvz_inv}
   f \in \RVzp{\rho},\,\, \rho > 0 \quad \Rightarrow \quad f^{-} \in \RVzp{1/\rho},
\end{equation}
and in case of  $\rho = 0$ we have
\begin{equation}\label{eq:ravz_inv}
   f \in \RVzp{0} \quad \Rightarrow \quad f^{-} \in \RVzp{\infty}.
\end{equation}
\paragraph{Asymptotic equivalence.}
We say that two functions $f:(0,\,a]\to(0,\,\infty)$ and $g:(0,\,a]\to(0,\,\infty)$ are \emph{asymptotically equivalent up to a constant} if  there is a constant $\mu > 0$ such that 

\begin{equation}\label{eq:asy_def}
f(t) - \mu g(t) = o(g(t)), \text{ as } t\to 0_+.
\end{equation}
In this case, we write \emph{$f(t)\overset{c}{\sim} g(t)$ as $t\to0_+$}, or we may simply 
write \emph{$f\overset{c}{\sim} g$} if it is clear from context what is meant. If $\mu = 1$, we say that 
$f$ and $g$ are \emph{asymptotically equivalent}  and write  \emph{$f(t)\sim g(t)$ as $t\to0_+$} or $f \sim g$. Then, for measurable functions $f$ and $g$ we have the following implication:
\begin{equation}\label{asym_eq}
f \overset{c}{\sim} g, \ \ f\in\RVz_{\rho} \quad \Rightarrow \quad g\in\RVz_{\rho}.
\end{equation}
Indeed, by the definition one has $\lim_{x\to 0_+}\frac{f(x)}{g(x)} = \mu + \lim_{x\to 0_+}\frac{f(x) - \mu g(x)}{g(x)} = \mu$ and therefore,
\begin{equation*}
\lim_{x\to0_+}\frac{g(\lambda x)}{g(x)} = \lim_{x\to0_+}\frac{g(\lambda x)}{f(\lambda  x)}\frac{f(\lambda x)}{f(x)}\frac{f(x)}{g(x)} = \lim_{x\to0_+}\frac{g(\lambda x)}{f(\lambda  x)}\cdot\lim_{x\to 0_+}\frac{f(\lambda x)}{f(x)}\cdot\lim_{x\to 0_+}\frac{f(x)}{g(x)} = \frac{1}{\mu}\lambda^{\rho}\mu = \lambda^{\rho}.
\end{equation*}
For multiple functions, we called them \emph{pairwise asymptotically equivalent up to a constant}  (resp. \emph{pairwise asymptotically equivalent}) if any two functions among them 
are \emph{asymptotically equivalent up to a constant} (resp.~\emph{asymptotically equivalent}).

The notion in \eqref{eq:asy_def} corresponds to asymptotic equivalence at $0_+$, but similarly, we can define asymptotic equivalence at infinity, e.g., $f,g$ are \emph{asymptotically equivalent up to a constant} (at infinity) if $f(t) - \mu g(t) = o(g(t)), \text{ as } t\to +\infty$. For simplicity, we will use the same notation as it will be clear from context if asymptotic equivalence is meant at $0_+$ or at $\infty$.
Similarly, for measurable functions $f$ and $g$ we have 
\begin{equation}\label{asym_eq_RV}
f \overset{c}{\sim} g, \ \ f\in\RV_{\rho} \quad \Rightarrow \quad g\in\RV_{\rho}.
\end{equation}

\section{Karamata regularity and convergence rates}\label{sec:ca}

In this section, we will explore the convergence of a family of algorithms for the common fixed point problem \eqref{fcp_prob}. Naturally, this will be done under certain assumptions on the operators $T_i$.
We start by introducing the following definition.
\begin{definition}[Karamata regularity]\label{def_jcrv}
Let $L_i: \E\rightarrow\E$ $(i = 1,\ldots,n)$ be operators with $C \coloneqq \bigcap_{i=1}^n\F\,L_i\neq\emptyset$ and $B \subset \E$ a given bounded set. The $L_i$ are said to be \emph{jointly  Karamata regular (JKR) over $B$} if there exists a function $\psi_B:\R_{+}\to \R_{+}$ 
such that the following properties are satisfied.
\begin{enumerate}[$(i)$]
	\item \label{def_jcrv:1} The following error bound condition holds:
	\begin{equation}\label{eq:def_jcrv}
	\dist(x,\,C) \le \psi_B\Big(\max_{1\le i\le n}\|x - L_i(x)\|\Big), \ \ \forall\ x\in B.
	\end{equation}
	\item \label{def_jcrv:2}
	$\psi_B$ is  nondecreasing and  satisfies $\lim_{t\to0_+}\psi_B(t) = \psi_B(0) = 0$. 
	\item \label{def_jcrv:3}
	For some $a > 0$, it holds that $\psi_B|_{(0,\,a]}\in\RVz_{\rho}$ with $\rho\in[0,\,1]$. 
\end{enumerate}
We will refer to $\psi_B$ as a \emph{regularity function for the $L_i$'s  over $B$}.
If the operators $L_i$ are JKR over all bounded sets $B$ in such a way that the regularity functions $\psi_B$ can be taken to be  pairwise asymptotically equivalent up to a constant, we call them \emph{uniformly jointly Karamata regular (UJKR)}. 

In particular, when $n = 1$, we will drop the qualifier ``jointly'' and call the single operator $L$ \emph {Karamata regular (KR) over $B$} and \emph{uniformly  Karamata regular (UKR)}, respectively.

\end{definition}

\begin{remark}[Domain of $\psi_B$ and positivity]\label{dom_rv0}
In item~\ref{def_jcrv:3} of Definition~\ref{def_jcrv},  as far as regular variation at zero is concerned, the actual value of $a$ does not matter since only the behavior of $\psi_B$ as it approaches zero is relevant.
Nevertheless, even if we are flexible with the domain and image as in Remark~\ref{rem:dom},
a function in $\RVz_{\rho}$ must, at the very least, be positive close to zero.
Therefore, the requirement  that the restriction of $\psi_B$ to some $(0,\,a]$ is in $\RVz_{\rho}$ together with monotonicity implies $\psi_B(t) > 0$ for $t \neq 0$. 
\end{remark}

\begin{remark}[Connection with existing concepts]\label{rmk_def}
 Definition~\ref{def_jcrv} is closely related to several existing definitions. Later in Section~\ref{sec: o-minimal} we show that, under mild assumptions, operators whose fixed point sets intersect are always JKR over any bounded set.
 \begin{enumerate}
 
 \item[{\rm (i)}] \emph{(Bounded H\"{o}lder regular intersection)} Definition~\ref{def_jcrv} extends the definition of bounded H\"{o}lder regular intersection in \cite[Definition~2.2]{BLT17} as follows. 
 Let $C_1, \ldots, C_n \subseteq \E$ be convex sets and let $L_i \coloneqq P_{C_i}$ denote the projection operator onto $C_i$. 
With that, $C_1, \ldots, C_n \subseteq \E$ has a bounded H\"{o}lder regular intersection if and only if for every bounded set $B$ the $L_i$'s are JKR over $B$ and there exists $c_B > 0$ and $\gamma_B \in (0,\,1]$ such that  \eqref{eq:def_jcrv} holds with regularity function $\psi_B(\cdot) \coloneqq c_B(\cdot)^{\gamma_B}$.
In particular, if $\gamma_B\equiv\gamma$ does not depend on $B$ (in this case, we have that operators $L_i$ are UJKR), then the collection $\left\{C_i\right\}$ is bounded  H\"{o}lder regular with uniform exponent $\gamma$. 
We remark that the notion of bounded  H\"{o}lder regularity  coincides with the notion of H\"olderian error bound.
 
 \item[{\rm (ii)}]  \emph{(Bounded H\"{o}lder regular operators)} An operator  $L$ is  bounded H\"{o}lder regular (as in \cite[Definition~2.4]{BLT17}) if and only if 
 for every bounded set $B$, $L$ is KR over $B$ and the regularity function $\psi_B$ can be taken to be of the form $\psi_B(\cdot) = c_B(\cdot)^{\gamma_B}$ with $c_B >0$ and $\gamma_B \in (0,\,1]$. 
 The exponent $\gamma_B\equiv\gamma$ does not depend on $B$ (in this case, we have that $L$ is UKR) if and only if $L$ is bounded  H\"{o}lder regular with uniform exponent $\gamma$.

 \item[{\rm (iii)}] \emph{(Consistent error bounds)} For closed convex sets $C_i\subseteq\E$ $(i =1,\ldots,n)$ with non-empty intersection, a \emph{consistent error bound function} $\Phi$ (\cite[Definition~3.1]{LL20}) is a two-parameter function on $\R_+^2$, which is nondecreasing with respect to each variable and satisfies $\lim_{a\to 0_+}\Phi(a,\,b) = \Phi(0,\,b) = 0$ for all $b\ge 0$ and 
 \begin{equation}\label{eb_def_fcm}
\dist\Big(x,\,\bigcap_{i=1}^nC_i\Big) \le \Phi\Big(\max_{1\le i\le n}\dist(x,\,C_i),\, \|x\|\Big), \ \ \forall\ x\in\E.
\end{equation}
If \eqref{eb_def_fcm} holds, and for any $b > 0$ there exists some $\rho\in[0,\,1]$ such that $\Phi(\cdot,\,b)|_{(0,\,a]}\in\RVz_{\rho}$ for some $a > 0$, then the operators $L_i := P_{C_i}$ are JKR over the ball $\B_{b}$ of radius $b$ for any $b$. In addition, for any bounded set $B$, there exists some $r_B > 0$ such that $B\subseteq \B_{r_B}$. Let $\psi_{B}(\cdot):= \Phi(\cdot,\,r_B)$. Then we have from \eqref{eb_def_fcm} and the monotonicity of $\Phi$ with respect to the second variable that
\begin{equation*}
\begin{split}
\dist\Big(x,\,\bigcap_{i=1}^n\F\,L_i\Big) & = \dist\Big(x,\,\bigcap_{i=1}^nC_i\Big)  \le \Phi\Big(\max_{1\le i\le n}\dist(x,\,C_i),\, r_B\Big) \\
& = \psi_B\Big(\max_{1\le i\le n}\dist(x,\,C_i)\Big) = \psi_B\Big(\max_{1\le i\le n}\|x - L_i(x)\|\Big),\ \forall\ x\in B.
\end{split}
\end{equation*}
 Note that $\lim_{t\to 0_+}\psi_B(t) = \lim_{t\to 0_+}\Phi(t,\,r_B) = 0$ and $\psi_B(0) = \Phi(0,\,r_B) = 0$. In addition, $\psi_B$ is nondecreasing, thanks to the monotonicity of $\Phi$ with respect to the first variable. Moreover, the property of $\psi_B$ in item (iii) of Definition~\ref{def_jcrv} directly follows from the assumption on $\Phi$. 
 
 The summary is that if we have a consistent error bound function $\Phi$ for the $C_i$'s where for sufficiently large $b > 0$ the functions $\Phi(\cdot,\,b)$ are regularly varying with index $\rho _b \in [0,\,1]$, then the $P_{C_i}$'s are JKR over any bounded set $B$ and the regularity function can be taken to be  $\psi_{B}(\cdot):= \Phi(\cdot,\,b)$ for any $b$ satisfying $b \geq r_B$.
 Later in Corollary~\ref{col:consistent}, we will see that if the $C_i$'s are definable over an $o$-minimal structure and have non-empty intersection, we can always construct such a consistent error bound function.

 \end{enumerate}
 
\end{remark}

A typical situation in applications is  having some 
regularity condition on each \emph{individual operator} (e.g., as in Remark~\ref{rmk_def}~(ii)) and some error bound condition on the fixed point sets (e.g., as in Remark~\ref{rmk_def}~(iii)).
The next results indicates how to aggregate these individual results and establish joint Karamata regularity for the operators.


\begin{proposition}[Calculus of $\psi_B$]\label{prop_psi}
 Suppose that $L_i: \E\rightarrow\E$ $(i = 1,\ldots,n)$  are nonexpansive, closed operators such that $\bigcap_{i=1}^n\F\,L_i\neq\emptyset$ holds. 
Let $\Phi$ be a consistent error bound function for the sets $\F\,L_i$, $B \subseteq \E$ a bounded set and suppose that $\Phi$ and $L_i$ are as follows:
\begin{enumerate}
\item[{\rm (i)}] for any $b > 0$, there exists some $\theta_b\in[0,\,1]$ such that $\Phi(\cdot,\,b)|_{(0,\,a]}\in\RVz_{\theta_b}$ for some $a > 0$;
\item[{\rm (ii)}] each $L_i$ is Karamata regular over $B$.
\end{enumerate}
Then, the following statements hold.
\begin{enumerate}[$(a)$]
	\item The operators $L_i$ $(i = 1,\ldots,n)$ are jointly Karamata regular over $B$. 
	\item Let $\Gamma_B^i$ be the regularity function for each $L_i$ over $B$. Assume that $\Gamma_B^i|_{(0,\,a]}\in\RV_{\rho_i}^0$.
	Then, the function defined by
	 \[\psi_B:= \Theta_B\circ\Gamma_B\]
	 satisfies \eqref{eq:def_jcrv} and $\psi_B|_{(0,\,a]}\in\RVz_{\rho}$, where $\rho = \theta_{b}\min\limits_{1\le i\le n}\rho_i$, $\Theta_B(\cdot):=\Phi(\cdot,\,b)$, $\Gamma_B:=\sum\limits_{i=1}^n\Gamma_B^i$ and $b$ is such that $B\subseteq \B_{b}$.
	 \end{enumerate}

\end{proposition}

\begin{proof}
We will prove item~(a) and (b) together. 
Since each $L_i$ is nonexpansive and closed,  each $\F\,L_i$ must be closed and convex, e.g.,  \cite[Proposition~4.13]{BC11}.  
By assumption, $\bigcap_{i=1}^n\F\,L_i\neq\emptyset$ and $\Phi$ is a  consistent error bound function  for the sets $\F\,L_i$ $(i = 1,\ldots,n)$. Therefore,
\begin{equation*}
\dist\Big(x,\,\bigcap_{i=1}^n\F\,L_i\Big) \le \Phi\Big(\max_{1\le i\le n}\dist(x,\,\F\,L_i),\, \|x\|\Big), \ \forall\ x\in\E.
\end{equation*}
This together with $B\subseteq \B_{b}$, the monotonicity of $\Phi$ and the definition of $\Theta_B$ further implies
\begin{equation}\label{eb_sets}
	\dist\Big(x,\,\bigcap_{i=1}^n\F\,L_i\Big)  \le \Phi\Big(\max_{1\le i\le n}\dist(x,\,\F\,L_i),\, b\Big) = \Theta_B\Big(\max_{1\le i\le n}\dist(x,\,\F\,L_i)\Big),\ \forall\ x\in B.
\end{equation}
On the other hand, we see from assumption~(ii) and the assumption on the $\Gamma_B^i$'s that
\begin{equation}\label{pr_sing}
\dist(x,\,\F\,L_i) \le \Gamma_B^i\left(\|x - L_i(x)\|\right),  \ \forall\ x\in B.
\end{equation}
Combining \eqref{eb_sets} and \eqref{pr_sing}, we have from the monotonicity of $\Theta_B$ and the nonnegativity of $\Gamma_B^i$  that for all $x\in B$,
\begin{equation*}
\begin{split}
\dist\Big(x,\,\bigcap_{i=1}^n\F\,L_i\Big) & \le \Theta_B\Big(\max_{1\le i\le n}\Gamma_B^i\left(\|x - L_i(x)\|\right)\Big) \le \Theta_B\Big(\max_{1\le i\le n}\Gamma_B\left(\|x - L_i(x)\|\right)\Big)\\
& = \Theta_B\Big(\Gamma_B\Big(\max_{1\le i\le n}\|x - L_i(x)\|\Big)\Big) = \psi_B\Big(\max_{1\le i\le n}\|x - L_i(x)\|\Big).
\end{split}
\end{equation*}
Note that $\Theta_B(\cdot)=\Phi(\cdot,\,b)$ is  nondecreasing and \[
\lim_{t\to 0_+}\Theta_B(t) = \lim_{t\to 0_+}\Phi(t,\,b) = 0 = \Theta_B(0).\]
Also, $\Gamma_B$ is  nondecreasing and it holds that $\lim_{t\to 0_+}\Gamma_B(t) = \Gamma_B(0) = 0$, thanks to $\Gamma_B=\sum\limits_{i=1}^n\Gamma_B^i$ and the properties of $\Gamma_B^i$. Consequently, $\psi_B= \Theta_B\circ\Gamma_B$ is  nondecreasing and satisfies $\lim_{t\to 0_+}\psi_B(t) = \psi_B(0) = 0$.
Moreover, since $\Theta_B|_{(0,\,a]}\in\RVz_{\theta_b}$ and $\Gamma_B^i|_{(0,\,a]}\in\RVz_{\rho_i}$, using \eqref{eq:rvz_calc} we have $\psi_B|_{(0,\,a]} = (\Theta_B\circ\Gamma_B)|_{(0,\,a]}\in\RVz_{\rho}$ with $\rho = \theta_b\min\limits_{1\le i\le n}\rho_i$. This completes the proof.
\end{proof}


\subsection{General convergence theory}
The goal of this section is to present a general result that connects the error bound function $\psi_B$ appearing in Definition~\ref{def_jcrv} to the convergence rate of a sequence generated by the quasi-cyclic algorithm described in \cite{BLT17}. This will be accomplished by applying a series of functional transformations to $\psi_B$ which will culminate in Theorem~\ref{theo:rate}, the main result of this section. 
Theorem~\ref{theo:rate} is an abstract result that is hard to apply directly, but in later sections we will show how to use regular variation to better estimate the asymptotic properties of the function appearing in Theorem~\ref{theo:rate} without  explicitly computing it.

\paragraph{Quasi-cylic algorithms.}
We recall the common fixed point problem in  \eqref{fcp_prob}. In \cite[Section~3]{BLT17}, 
Borwein, Li and Tam analysed the framework 
of \emph{quasi-cyclic algorithms} which was considered earlier by Bauschke, Noll and Phan in \cite{BNP15}.
A quasi-cylic algorithm is given by iterations that are as follows:
\begin{equation}\label{alg_iter}
x^{k + 1} = \sum_{i=1}^m w_i^kT_i(x^k),
\end{equation}
where each $T_i$ is $\alpha$-averaged, the weight parameters $w_i^k \ge 0$ satisfy $\sum_{i=1}^mw_i^k = 1$ and $\nu:=\mathop{\inf}\limits_{k\in\N}\min\limits_{i\in I_+(k)}w_i^k >0$, where $I_+(k):=\{1\le i\le m\mid w_i^k > 0\}$.  
The algorithm framework \eqref{alg_iter} covers a number of projection algorithms, the Douglas-Rachford splitting method as well as the forward-backford splitting method, see \cite{BLT17}.
Later, in Theorem~\ref{theo:rate} we will impose additional conditions on the $I_+(k)$ in order to ensure convergence. 
This amounts to requiring that each operator $T_i$ is ``activated'' (i.e., $w_i^k > 0$) at least every $s > 0$ iterations, which is an idea that harkens back in different forms to several earlier works, e.g., see \cite[Section~3]{ABC83}, \cite[Section~2]{Censor84}, \cite[pg.~435]{Ottavy88}, \cite[section~2]{FZ90}, \cite[pg.~382]{BB96} and many others.

\paragraph{A general convergence rate result.}
In \cite{BLT17}, the authors derived convergence 
rate results for the iteration \eqref{alg_iter} under certain H\"olderian assumptions. 
Here, one of our main goals is to prove convergence 
rates under the more general Karamata regularity condition as 
in Definition~\ref{def_jcrv}.

Let $f: (0,\,a]\to (0,\,\infty)$ satisfy $\lim _{x\to 0_+}f(x) = 0$. Fix $\delta > 0$ and consider the following  integral.
\begin{equation}\label{pf_f}
    \Phi_f(x):= \int_x^{\delta}\frac{1}{f^{-}(t)}dt, \ \ x > 0,
\end{equation}
which is well-defined for 
$x \in (0,\infty)$.
The integral in \eqref{pf_f} plays a fundamental role in this paper and its \emph{inverse} is related to the convergence rate of algorithms as will be described in Theorem~\ref{theo:rate}.
In the next lemma, we will check some properties of $\Phi_f$.

\begin{lemma}\label{pf_inv}
Let $f: (0,\,a]\to (0,\,\infty)$ satisfy $\lim _{x\to 0_+}f(x) = 0$ and let $\Phi_f$ be defined as in \eqref{pf_f}. Then the following statements hold.
\begin{enumerate}[$(i)$]
	\item \label{pf_inv_i} $\Phi_f$ is continuous and  decreasing;
	\item \label{pf_inv_ii} Suppose that $f$ is  nondecreasing and one of the conditions below is satisfied:
	\begin{enumerate}[$(a)$]
		\item \label{case_a} $f\in\RVz_{\rho}$ with $\rho\in[0,\,1)$;
		
		\item  \label{case_b} $f(x)\ge cx$ holds for some $c > 0$ as $x\to 0_+$.
	\end{enumerate}
	Then, $\Phi_f(x)\to\infty$ as $x\to 0_+$.
\end{enumerate}
\end{lemma}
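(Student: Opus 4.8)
The plan is to handle the two items separately, with item~$(i)$ being essentially immediate from the fundamental theorem of calculus once the integrand is understood, and item~$(ii)$ requiring a lower bound on the integral $\int_x^\delta \frac{1}{f^-(t)}\,dt$ as $x \to 0_+$.

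For item~$(i)$: First I would note that since $f$ satisfies $\lim_{x\to0_+}f(x)=0$ and maps into $(0,\infty)$, the minus inverse $f^-$ is well-defined on $(0,\infty)$, nondecreasing (by Lemma~\ref{inv_lemma}), and strictly positive on $(0,\infty)$ — the positivity because $f^-(t) = \sup\{y \in (0,a] \mid f(y) < t\}$ and, since $f(y)\to 0$ as $y\to 0_+$, for any $t > 0$ there is $y > 0$ with $f(y) < t$, so the supremum is positive. Hence $1/f^-$ is a positive, nonincreasing, and therefore locally bounded (and locally Riemann/Lebesgue integrable) function on $(0,\infty)$, so $\Phi_f(x) = \int_x^\delta \frac{1}{f^-(t)}\,dt$ is well-defined and finite for every $x > 0$. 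Continuity of $\Phi_f$ follows from absolute continuity of the integral (dominated convergence / the integrand being locally bounded), and $\Phi_f$ is decreasing because $\Phi_f(x_1) - \Phi_f(x_2) = \int_{x_1}^{x_2}\frac{1}{f^-(t)}\,dt > 0$ whenever $x_1 < x_2$, using strict positivity of $1/f^-$.

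For item~$(ii)$: the goal is to show $\int_x^\delta \frac{1}{f^-(t)}\,dt \to \infty$ as $x \to 0_+$, i.e.\ that $\int_0^\delta \frac{1}{f^-(t)}\,dt = +\infty$. The natural strategy is a change-of-variables / substitution heuristic: if $f$ were a nice increasing bijection, $\int_0^\delta \frac{dt}{f^-(t)}$ would, after $t = f(s)$, become $\int_0^{f^{-1}(\delta)} \frac{f'(s)}{s}\,ds$, and divergence at $s=0$ is exactly what Karamata's theory predicts when $f(s)/s \to 0$ (as in case~$(a)$ with $\rho < 1$, by \eqref{lim_rv0_pos}-type reasoning, or trivially fails in case~$(b)$). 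Rather than make the substitution rigorous for a merely nondecreasing $f$, I would instead argue directly with the definition of $f^-$. In case~$(b)$: since $f(x) \ge cx$ near $0$, for small $t$ we have $f^-(t) = \sup\{y \mid f(y) < t\} \le \sup\{y \mid cy < t\} = t/c$ (for $y$ in the relevant range), hence $\frac{1}{f^-(t)} \ge \frac{c}{t}$, and $\int_x^\delta \frac{c}{t}\,dt = c(\ln\delta - \ln x) \to \infty$. In case~$(a)$: since $f \in \RVz_\rho$ with $\rho \in [0,1)$ and $f$ is nondecreasing (hence locally bounded away from zero), \eqref{eq:rvz_inv} or \eqref{eq:ravz_inv} gives $f^- \in \RVz_{1/\rho}$ (with $1/\rho \in (1,\infty]$, interpreting $1/0 = \infty$), so $1/f^- \in \RVz_{-1/\rho}$ with index $-1/\rho < -1$. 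Then I would invoke a Karamata-type integrability statement: a regularly varying function at $0$ with index strictly less than $-1$ is not integrable near $0$ (the analogue at $0$ of \cite[Proposition~1.5.10]{BGT87}); concretely, one can use Potter's bounds \eqref{eq:potter} to get $\frac{1}{f^-(t)} \ge K t^{-1/\rho + \epsilon}$ for small $t$ with $-1/\rho + \epsilon < -1$ for suitably small $\epsilon$, whence $\int_x^\delta \frac{dt}{f^-(t)} \ge K \int_x^\delta t^{-1/\rho+\epsilon}\,dt \to \infty$ as $x\to0_+$. The case $\rho = 0$ (where $f^- \in \RVz_\infty$) is even more extreme and is covered by the same Potter-bound argument (take any exponent $< -1$), or directly from \eqref{eq:ravz_inv}.

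**Main obstacle.** The delicate point is case~$(a)$ with $\rho$ close to $1$ and especially the boundary reasoning: I must be careful that $f$ being merely nondecreasing (not strictly increasing, not continuous) still entitles me to conclude $f^- \in \RVz^0_{1/\rho}$ — this is where the "locally bounded away from zero" remarks preceding \eqref{eq:rvz_inv} are essential, and I should double-check that the hypothesis "$f$ nondecreasing" supplies exactly that. The rest is routine estimation with Potter's bounds; the only genuine subtlety is making sure the index strictly exceeds $1$ in absolute value so that the power $t^{-1/\rho+\epsilon}$ is genuinely non-integrable at $0$, which forces the restriction $\rho < 1$ (equality $\rho = 1$ is excluded precisely because then $1/f^-$ could be like $1/t$ up to a slowly varying factor, and integrability near $0$ becomes borderline/possible).
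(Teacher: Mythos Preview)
Your proposal is correct. Item~$(i)$ and case~$(b)$ of item~$(ii)$ match the paper's argument essentially verbatim: positivity and monotonicity of $f^-$ give local integrability of $1/f^-$, hence continuity and strict decrease of $\Phi_f$; and $f(x)\ge cx$ forces $f^-(t)\le t/c$ near $0$, whence $1/f^-(t)\ge c/t$ and the integral diverges like $-c\ln x$.

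For case~$(a)$ you take a genuinely different route. You invoke \eqref{eq:rvz_inv}/\eqref{eq:ravz_inv} to get $f^-\in\RVz_{1/\rho}$ (or $\RVz_\infty$ when $\rho=0$), pass to $1/f^-\in\RVz_{-1/\rho}$ with index $<-1$, and then use Potter's bounds to dominate from below by a non-integrable power $t^{-1/\rho+\epsilon}$. This is valid, but heavier than necessary. The paper's argument is shorter and avoids both the inverse-index results and Potter's bounds: since $f\in\RVz_\rho$ with $\rho<1$, the quotient $f(t)/t$ lies in $\RVz_{\rho-1}$ with negative index, so by \eqref{lim_rv0_neg} we have $f(t)/t\to\infty$ as $t\to 0_+$. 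In particular $f(t)\ge t$ for $t\in(0,\varepsilon]$, and then Lemma~\ref{inv_lemma} gives $f^-(t)\le t$ on $(0,\varepsilon]$, so $\Phi_f(x)\ge\int_x^\varepsilon t^{-1}\,dt=\ln\varepsilon-\ln x\to\infty$. In effect the paper reduces case~$(a)$ to the $c=1$ instance of case~$(b)$ in one line, whereas your approach treats case~$(a)$ as a separate regime requiring the full regular-variation toolkit. Your route does buy a bit more: it makes transparent exactly how fast $\Phi_f$ blows up (roughly like $t^{1-1/\rho}$), which foreshadows Theorem~\ref{theo:rv}, but for the lemma as stated the paper's reduction is the cleaner path.
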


\begin{proof}
First, we see from the definition of $f^{-}$ in \eqref{inv_fun} that $f^{-}(x) >0$ for all $x >0$. Then $\Phi_f$ is  decreasing. Next, we show the continuity of $\Phi_f$ on $(0,\,\infty)$. For any fixed $\widehat{x}\in(0,\,\infty)$, one can find a compact interval $[c,\,d]$ such that $\widehat{x}\in[c,\,d]$ and $c > 0$. Then for any $x\in[c,\,d]$, one has
\begin{equation*}
\Phi_f(x) = \int_x^{\delta}\frac{1}{f^{-}(t)}dt = \int_c^{\delta}\frac{1}{f^{-}(t)}dt + \int_{x}^c\frac{1}{f^{-}(t)}dt = \Phi_f(c) + \int_{c}^x\frac{-1}{f^{-}(t)}dt.
\end{equation*}
Let $g(t):=\frac{-1}{f^{-}(t)}$. By  Lemma~\ref{inv_lemma}, we see that $g$ is  nondecreasing, and therefore $g|_{[c,\,d]}$ is measurable and integrable. Using \cite[Theorem~4.4.1]{al06}, we then have that $\Phi_f|_{[c,\,d]}$ is absolutely continuous. This together with the arbitrariness of $\widehat{x}$ proves the continuity of $\Phi_f$ on $(0,\,\infty)$ and completes proof of item~\ref{pf_inv_i}.

Now we prove $\Phi_f(x)\to\infty$ as $x\to 0_+$ in two cases.
\begin{itemize}
\item In case \ref{case_a}, from \eqref{eq:rvz_calc} we have $f(t)/t\in\RVz_{\rho - 1}$. Since $\rho - 1 < 0$, we see from \eqref{lim_rv0_neg} that $f(t)/t\to\infty$ as $t\to 0_+$, which further implies $f(t) \ge t$ when $t\in (0,\,\varepsilon]$ for some $\varepsilon\in(0,\,\delta)$.
By the monotonicity of $f$ and Lemma~\ref{inv_lemma}, it holds $f^{-}(t) \le t$ for $t\in (0,\,\varepsilon]$. Using this and $f^{-} > 0$, we have for all $x\in(0,\,\varepsilon)$ that
\begin{equation*}
    \Phi_f(x):= \int_x^{\delta}\frac{1}{f^{-}(t)}dt \ge \int_x^{\varepsilon}\frac{1}{f^{-}(t)}dt \ge \int_{x}^{\varepsilon}\frac{1}{t}dt = \ln(\varepsilon) - \ln(x),
\end{equation*}
which proves $\Phi_f(x)\to\infty$ as $x\to 0_+$.

\item In case \ref{case_b}, we see that there exists some  $\varepsilon\in(0,\,\delta/c]$ such that $cx \leq f(x)$ for $x\in(0,\,\varepsilon]$. By the monotonicity of $f$ and Lemma~\ref{inv_lemma}, it holds $f^{-}(cx)\le x$ for  $x\in(0,\,\varepsilon]$. Thus, for all $x\in(0,\,c\varepsilon)$, it holds
\begin{equation*}
\Phi_f(x):= \int_x^{\delta}\frac{1}{f^{-}(t)}dt \ge  \int_{x}^{c\varepsilon}\frac{1}{f^{-}(t)}dt = c\int_{x/c}^{\varepsilon}\frac{1}{f^{-}(cy)}dy \ge c\int_{x/c}^{\varepsilon}\frac{1}{y}dy = c\ln(\varepsilon) - c\ln(x/c),
\end{equation*}
which proves $\Phi_f(x)\to\infty$ as $x\to 0_+$.
\end{itemize}
This completes the proof.
\end{proof}

\begin{remark}[Condition in Lemma~\ref{pf_inv}]
We list two special conditions contained  in Lemma~\ref{pf_inv}~(ii)~\ref{case_b}: 
\begin{itemize}
\item[{\rm (i)}] $\lim_{x\to0_+}\frac{f(x)}{x} = \infty$. This includes the entropic error bound function in \cite[Section~4.2.1]{LLP20} and \cite[Section~6.2]{LL20}: $f(x) \coloneqq -x\ln(x)$, $x\in(0,\,a]$ for some $a > 0$ and we note that $f$ belongs to $\RVz_1$.
\item[{\rm (ii)}] $\lim_{x\to0_+}\frac{f(x)}{x} = \mu$ for some $\mu > 0$. This corresponds to $f(x)\overset{c}{\sim} x$, and further implies that $f\in\RVz_1$, thanks to \eqref{asym_eq}.
\end{itemize}
One the other hand, the condition $f\in\RVz_1$ alone is not enough to guarantee $\Phi_f(x)\to\infty$ as $x\to 0_+$. Consider the following function:
\begin{equation*}
g(x):= x(1 + x)\left(\ln(1 + 1/x)\right)^2, \ \ \ x\in(0,\,1/(e^2 - 1)].
\end{equation*}
We have $g\in\RVz_1$ and $g$ is  increasing and continuous. Then the usual inverse $g^{-1}$ exists. Let $f := g^{-1}$. We then have $f^{-} = f^{-1} = g$. Moreover, we have from \eqref{eq:rvz_inv} that $f\in\RVz_1$ and $f$ is  increasing. However, when $x,\,\delta\in(0,\, 1/(e^2 - 1)]$,
\begin{equation*}
    \Phi_f(x):= \int_x^{\delta}\frac{1}{f^{-}(t)}dt = \int_{x}^{\delta}\frac{1}{t(1 + t)\left(\ln(1 + 1/t)\right)^2}dt = \frac{1}{\ln(1 + 1/\delta)} - \frac{1}{\ln(1 + 1/x)},
\end{equation*}
which implies that $\Phi_f(x)\not\to\infty$ as $x\to0_+$.
\end{remark}

All pieces are now in place for the main result of this subsection.
Here, we recall that $F$ is as in \eqref{fcp_prob}, i.e., it is the intersection of the fixed point sets of the $T_i$'s, where each $T_i$ is assumed to be $\alpha$-averaged.
Also, $\nu$ and $I_+(k)$ are as described after \eqref{alg_iter}.
\begin{theorem}\label{theo:rate} 
Let sequence $\{x^k\}$ be generated by quasi-cyclic algorithm \eqref{alg_iter}. Then  $\{x^k\}$ is bounded. 
Let $B$ be a bounded set containing $\{x^k\}$ and suppose that the following assumptions hold:
\begin{itemize}
\item[{\rm (a)}] $T_i$ $(i = 1,\ldots,m)$ are jointly Karamata regular (\emph{resp.}  Karamata regular when $m = 1$)  over $B$ with regularity function $\psi_B$  as in Definition~\ref{def_jcrv};  

\item[{\rm (b)}] there exists some $s > 0$ such that for each $k\in\N$,
\begin{equation*}
 I_+(k)\cup I_+(k+1)\cup\cdots\cup I_+(k + s -1) = \{1,\ldots,m\}.   
\end{equation*}
\end{itemize}
 Fix any $\widehat{a} > \dist^2(x^0,\,F)$ and $\delta > 0$ and define 
\begin{equation}\label{def_pf}
  \widehat{\phi}(u) := \psi_B^2\Big(\sqrt{\frac{2\alpha(1 + 4\nu s)}{\nu(1-\alpha)}u}\Big),  \ \ \ \phi(u) = \widehat{\phi}(u)|_{(0,\,\widehat{a}]}, \ \ \  \Phi_{\phi}(u)  :=\int_u^{\delta}\frac{1}{\phi^{-}(t)}dt, \ u > 0.
\end{equation}
Then $\{x^k\}$ converges to some $x^*\in F=\bigcap_{i=1}^m\F\,T_i$ finitely or the convergence rate is given by 
\begin{equation}\label{gener_rate}
    \dist(x^k,\,F) \le \sqrt{\Phi_{\phi}^{-1}\left(\Phi_{\phi}\left(\dist^2\left(x^{0},\,F\right)\right) + \lfloor{k/s}\rfloor\right)},\ \ \forall\ k\in\N.
\end{equation}
\end{theorem}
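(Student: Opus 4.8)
The plan is to follow the standard ``Fejér-type descent inequality plus summation plus recurrence-to-integral'' pipeline, using the $\alpha$-averaged inequality (Lemma~\ref{lm_alpha_av}) as the engine and the integral $\Phi_\phi$ as the device that turns a one-step decrease into a rate in $k$. First I would establish boundedness of $\{x^k\}$: since $\sum_i w_i^k T_i(x^k)$ is a convex combination and every $T_i$ is nonexpansive with $F\subseteq \F\,T_i$, for any $x^*\in F$ we get $\norm{x^{k+1}-x^*}\le\sum_i w_i^k\norm{T_i(x^k)-x^*}\le\norm{x^k-x^*}$, so $\{x^k\}$ is Fejér monotone w.r.t.\ $F$ and hence bounded; fix a bounded $B$ containing it. Next, I would derive a quantitative one-step decrease of $d_k^2:=\dist^2(x^k,\,F)$. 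Applying Lemma~\ref{lm_alpha_av} to each $T_i$ at $x^k$ and $x^*:=P_F(x^k)$, taking the convex combination over $i$, and using convexity of $\norm{\cdot}^2$, one obtains $d_{k+1}^2\le d_k^2-\tfrac{1-\alpha}{\alpha}\sum_i w_i^k\norm{x^k-T_i(x^k)}^2$, so in particular $d_{k+1}^2\le d_k^2-\tfrac{(1-\alpha)\nu}{\alpha}\max_{i\in I_+(k)}\norm{x^k-T_i(x^k)}^2$. This is where assumption~(b) enters: over a window of $s$ consecutive indices every operator is ``active'', and combined with Fejér monotonicity (the iterates move little) one upgrades the per-step bound to $d_{k+s}^2\le d_k^2-c\,\max_{1\le i\le m}\norm{x^k-T_i(x^k)}^2$ for a constant $c=\tfrac{\nu(1-\alpha)}{2\alpha(1+4\nu s)}$ matching the scaling inside $\widehat\phi$ in \eqref{def_pf}; the factor $1+4\nu s$ should come from bounding the drift $\sum_{j=0}^{s-1}\norm{x^{k+j+1}-x^{k+j}}$ in terms of the residuals.

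Then I would feed in Karamata regularity. By assumption~(a), $d_k\le\psi_B(\max_i\norm{x^k-T_i(x^k)})$, and since $\psi_B$ is nondecreasing this gives $\max_i\norm{x^k-T_i(x^k)}\ge\psi_B^{-}(d_k)$ — more precisely, using Lemma~\ref{inv_lemma}, $d_k\le\psi_B(t)\Rightarrow\psi_B^{-}(d_k)\le t$. Plugging this into the windowed descent and rewriting everything in terms of $u_j:=d_{js}^2$ yields $u_{j+1}\le u_j-\phi^{-}(u_j)$, where $\phi$ is exactly the function in \eqref{def_pf} (the composition of $\psi_B^2$ with the linear rescaling absorbs $c$, so that $\phi^{-}$ is the relevant inverse). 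Now the core analytic step: I would compare the discrete recurrence $u_{j+1}\le u_j-\phi^{-}(u_j)$ with the differential inequality $-\dot u=\phi^{-}(u)$, whose solution is implicitly $\Phi_\phi(u)=\Phi_\phi(u_0)+t$. Concretely, since $\phi^{-}$ is nondecreasing and $\Phi_\phi$ is continuous and decreasing (Lemma~\ref{pf_inv}~\ref{pf_inv_i}), for each $j$ one estimates $\Phi_\phi(u_{j+1})-\Phi_\phi(u_j)=\int_{u_{j+1}}^{u_j}\frac{dt}{\phi^{-}(t)}\ge\frac{u_j-u_{j+1}}{\phi^{-}(u_j)}\ge 1$ (using $\phi^{-}$ nondecreasing so its max on $[u_{j+1},u_j]$ is $\phi^{-}(u_j)$, and $u_j-u_{j+1}\ge\phi^{-}(u_j)$); telescoping over $j=0,\dots,\lfloor k/s\rfloor-1$ gives $\Phi_\phi(u_{\lfloor k/s\rfloor})\ge\Phi_\phi(u_0)+\lfloor k/s\rfloor$, and applying $\Phi_\phi^{-1}$ (which is decreasing, so the inequality flips back) yields $d_k^2=u_{\lfloor k/s\rfloor}\le\Phi_\phi^{-1}(\Phi_\phi(d_0^2)+\lfloor k/s\rfloor)$, i.e.\ \eqref{gener_rate}. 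The ``finite termination'' alternative arises when some $u_j$ hits $0$ (then $\psi_B^{-}(0)$ and the argument of $\Phi_\phi^{-1}$ degenerate), which is precisely the case $x^k\to x^*\in F$ finitely; I would handle it as a separate easy branch.

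The main obstacle I anticipate is the windowing step: carefully propagating the single-step $\alpha$-averaged decrease across $s$ iterations while (i) controlling the displacement of the iterates so that residuals $\norm{x^{k}-T_i(x^k)}$ at different times in the window are comparable, and (ii) getting the constant to come out as exactly $\tfrac{2\alpha(1+4\nu s)}{\nu(1-\alpha)}$ rather than some messier expression — this bookkeeping, essentially the part already present in \cite{BLT17}, is the delicate piece. A secondary subtlety is well-definedness: one must ensure $\phi^{-}$ is strictly positive on $(0,\infty)$ (so the integrand is integrable and the recurrence makes sense), which follows from Remark~\ref{dom_rv0} and the definition of the minus inverse, and that $\Phi_\phi$ is genuinely invertible on the relevant range, which is exactly where Lemma~\ref{pf_inv}~\ref{pf_inv_i} (continuity and strict monotonicity) is used; note that $\Phi_\phi(u)\to\infty$ as $u\to0_+$ (Lemma~\ref{pf_inv}~\ref{pf_inv_ii}, which applies since $\psi_B^2$ either is $\RVz_\rho$ with $\rho<1$ — when $\psi_B\in\RVz_{\rho'}$, $\rho'<1$, forcing... actually $\psi_B^2\in\RVz_{2\rho'}$, so one needs $\rho'<1/2$ — or satisfies the linear lower bound in case~\ref{case_b}) is needed to guarantee that $d_k\to 0$, i.e.\ that the bound is not vacuous, but for the bare inequality \eqref{gener_rate} only part~\ref{pf_inv_i} is strictly required.
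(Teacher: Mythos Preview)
Your proposal is essentially correct and follows the same route as the paper: Fej\'er monotonicity, an $\alpha$-averaged descent inequality summed over a window of length $s$ to obtain $\dist^2(x^{ks},F)\le \phi(\triangle_k)$ with $\triangle_k=\dist^2(x^{ks},F)-\dist^2(x^{(k+1)s},F)$, then the integral comparison $\Phi_\phi(u_{j+1})-\Phi_\phi(u_j)\ge 1$ and telescoping. Two small remarks: (i) in the final step you should write $d_k^2\le u_{\lfloor k/s\rfloor}$ (Fej\'er monotonicity), not equality; (ii) your parenthetical worry about the index of $\phi$ is miscounted --- since $\phi(u)=\psi_B^2(\sqrt{cu})$, the calculus rules give $\phi\in\RVz_{\rho}$ with the \emph{same} $\rho$ as $\psi_B$, not $2\rho$, and in any case the paper obtains $d_k\to 0$ directly by passing to the limit in the recurrence $\dist^2(x^{ks},F)\le\phi(\triangle_k)$ (using $\lim_{t\to0_+}\phi(t)=0$) rather than via $\Phi_\phi(u)\to\infty$, so as you yourself note this point is not needed for \eqref{gener_rate}.
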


\begin{proof}
Let $y\in F$. Then for all $k\in\N$, the nonexpansiveness of $T_i$ (see from Lemma~\ref{lm_alpha_av}) gives
\begin{equation*}
  \|x^{k+1} - y\| = \Big\|\sum_{i=1}^mw_i^kT_i(x^k) - y\Big\| \le \sum_{i=1}^mw_i^k\|T_i(x^k) - y\| \le \sum_{i=1}^mw_i^k\|x^k - y\| = \|x^k - y\|,
\end{equation*}
which simultaneously proves that $\{x^k\}$ is bounded and Fej\'{e}r monotone. In particular, whenever 
$r \geq k$ and $y \in F$ we have
\begin{align}
\norm{x^{r} - y} & \leq \norm{x^{k} - y}, \label{eq:fejer2}\\
\dist(x^{r},\,F) & \leq \dist(x^{k},\,F). \label{eq:fejer}
\end{align}
If there exists some $\widehat{k}$ such that $x^{\widehat{k}}\in F$,  we then see from \eqref{alg_iter} and $\sum_{i=1}^m w_i^k = 1$ that $x^k = x^{\widehat{k}}$ for all $k\ge \widehat{k}$.
In this case, $\{x^k\}$ converges to some $x^*\in F =\cap_{i=1}^m\F\,T_i$ finitely. Next, we consider the case that $x^k\notin F$ holds for all $k\in \N$. 

By assumption (a) and  $\{x^k\}\subseteq B$, we know that for  all $k\in\N$,
\begin{equation}\label{assp_a}
    \dist^2(x^{ks},\,F) \le \psi_B^2\Big(\sqrt{\max_{1\le i\le m}\|x^{ks} - T_i(x^{ks})\|^2}\Big).
\end{equation}
Fix any $t\in\{1,\ldots,m\}$. By assumption (b),  for any $k$ there exists some $t_k\in\{ks,\ldots,(k+1)s -1\}$ such that $t\in I_+(t_k)$. Thus, we have from the nonexpansiveness of $T_t$ that
\begin{equation}\label{main_ineq}
\begin{split}
\|x^{ks} - T_t(x^{ks})\|^2 & \le \left(\|x^{ks} - x^{t_k}\| + \|x^{t_k} - T_t(x^{t_k})\|  + \|T_t(x^{t_k}) - T_t(x^{ks})\|\right)^2 \\
& \le \left( \|x^{t_k} - T_t(x^{t_k})\|  + 2\|x^{ks} - x^{t_k}\|\right)^2\\
     & \overset{(a)}{\le} \Big(\|x^{t_k} - T_t(x^{t_k})\| + 2\sum_{j = ks}^{t_k - 1}\|x^j - x^{j+1}\|\Big)^2\\
     & \overset{(b)}{\le} 2\|x^{t_k} - T_t(x^{t_k})\|^2 + 8(t_k - ks)\sum_{j = ks}^{t_k-1}\|x^j - x^{j+1}\|^2\\
     & \le 2\|x^{t_k} - T_t(x^{t_k})\|^2 + 8s\sum_{j = ks}^{(k+1)s-1}\|x^j - x^{j+1}\|^2,
\end{split}
\end{equation}
where $(a)$ follows from repeated applications of the triangle inequality. For $(b)$, we consider two cases. If $t_k = ks$, then $(b)$ holds. For $t_k > ks$, we use the convexity of the square function so that $(a_1 + \cdots + a_r)^2 \leq r\sum _{i=1}^ra_i^2$ holds for arbitrary $a_i \in \R$. 
This inequality is first applied with $r \coloneqq 2$, $a_1 \coloneqq 2\|x^{t_k} - T_t(x^{t_k})\|^2$, $a_2$ being the remaining sum and, then, it is applied once more with $r \coloneqq t_k - ks$ to bound the remaining terms.

Next, we bound the two terms in the right-hand side of the last inequality of \eqref{main_ineq}. First, since each $T_i$ is $\alpha$-averaged, we have from Lemma~\ref{lm_alpha_av} that for all $x\in\E$ and $y\in \F\,T_i$,
\begin{equation*}
    \|T_i(x) - y\|^2 + \frac{1 - \alpha}{\alpha}\big\|(I - T_i)(x) \big\|^2 \le \|x - y\|^2
\end{equation*}
holds\footnote{Each $T_i$ may be in fact $\alpha_i$-averaged with a different $\alpha_i \in (0,1)$, but without loss of generality we may assume that the inequality holds with $\alpha = \max_{1\leq i \leq m}\{\alpha_i\}$.}. Therefore, for all $r\in\N$, $x\in\E$ and $y\in F$,
\begin{equation}\label{key_ineq}
    \begin{split}
       \Big\|\sum_{i=1}^mw_i^rT_i(x) - y\Big\|^2 =  \Big\|\sum_{i=1}^mw_i^r\left(T_i(x) - y\right)\Big\|^2 &\le \sum_{i=1}^mw_i^r\big\|T_i(x) - y\big\|^2\\
      &  \le \|x - y\|^2 - \frac{1 - \alpha}{\alpha}\sum_{i=1}^mw_i^r\|x - T_i(x)\|^2.
    \end{split}
\end{equation}
Some extra algebraic acrobatics leads to
\begin{equation}\label{ieq_1}
\begin{split}
    \frac{\nu(1 - \alpha)}{\alpha}\big\|x^{t_k} - T_t(x^{t_k})\big\|^2 & \overset{(a)}{\le} \frac{1 - \alpha}{\alpha}\sum_{i=1}^mw_i^{t_k} \big\|x^{t_k} - T_i(x^{t_k})\big\|^2 \\
   & \overset{(b)}{\le} \big\|x^{t_k} - P_F(x^{ks})\big\|^2 - \Big\|\sum_{i=1}^mw_i^{t_k}T_i(x^{t_k}) - P_F(x^{ks})\Big\|^2\\
   & = \big\|x^{t_k} - P_F(x^{ks})\big\|^2 - \big\|x^{t_k+1} - P_F(x^{ks})\big\|^2\\
   & \overset{(c)}{\le} \big\|x^{ks} - P_F(x^{ks})\big\|^2 - \big\|x^{(k+1)s} - P_F(x^{ks})\big\|^2\\
   & \le \dist^2(x^{ks},\,F) - \dist^2(x^{(k+1)s},\,F), 
\end{split}
\end{equation}
where $(a)$ follows from $t\in I_+(t_k)$ and the definition of $\nu$ which implies that $w_{t}^{t_k} \geq \nu$, (b) follows from \eqref{key_ineq} by letting 
$r \coloneqq  t_k$, $x \coloneqq  x^{t_k}$ and $y \coloneqq  P_F(x^{ks})$ in  \eqref{key_ineq}.
Then, $(c)$ follows from $ks \leq t_k$, $t_k+1 \leq (k+1)s$ and two applications of \eqref{eq:fejer2}: first with $x^{t_k}, x^{ks}$ and $P_F(x^{ks})$ and second with $x^{(k+1)s},x^{t_k+1}$ and $P_F(x^{ks})$.

For each $j\in\{ks,\ldots,(k+1)s - 1\}$,  we let $r\coloneqq j$, $x \coloneqq x^j$ and $y \coloneqq P_F(x^{ks})$ in \eqref{key_ineq}, and obtain
\begin{align*}
    \|x^j - x^{j+1}\|^2 = \Big\|x^j - \sum_{i=1}^mw_i^jT_i(x^j)\Big\|^2 &\le \sum_{i=1}^mw_i^j\|x^j -T_i(x^j)\|^2 \\ &\le \frac{\alpha}{1-\alpha}\big(\|x^j - P_F(x^{ks})\|^2 - \|x^{j+1} - P_F(x^{ks})\|^2\big).
\end{align*}
This further implies that
\begin{equation}\label{ieq_2}
\begin{split}
   \sum_{j = ks}^{(k+1)s-1}\|x^j - x^{j+1}\|^2 &\le \frac{\alpha}{1 - \alpha}\big(\|x^{ks} - P_F(x^{ks})\|^2 - \|x^{(k+1)s} - P_F(x^{ks})\|^2\big)\\
   & \le \frac{\alpha}{1 - \alpha}\Big(\dist^2(x^{ks},\,F) - \dist^2(x^{(k+1)s},\,F)\Big).
\end{split}
\end{equation}
Let $\triangle_k: = \dist^2\left(x^{ks},\,F\right) - \dist^2\left(x^{(k+1)s},\,F\right)$. Then we have from \eqref{eq:fejer} that $\triangle_k \le \widehat{a}$. 
Now,  we combine \eqref{assp_a}, \eqref{main_ineq}, \eqref{ieq_1}, \eqref{ieq_2} and the arbitrariness of $t\in\{1,\ldots,m\}$ to obtain
\begin{equation}\label{recu}
\begin{aligned}
\dist^2(x^{ks},\,F) & \leq \psi_B^2\Big(\max_{1\le t\le m}\sqrt{\|x^{ks} - T_t(x^{ks})\|^2}\Big) \\
& \leq \psi_B^2\Big(\max_{1\le t\le m}\sqrt{ 2\|x^{t_k} - T_t(x^{t_k})\|^2 + \frac{8s\alpha\triangle_k}{1-\alpha}}\Big) \\
& \leq \psi_B^2\Big(\sqrt{ \frac{2\alpha \triangle_k}{\nu(1-\alpha)}+ \frac{8s\alpha\triangle_k}{1-\alpha}}\Big) \\
& \le \psi_B^2\Big(\sqrt{\frac{2\alpha(1 + 4\nu s)}{\nu(1-\alpha)}\triangle_k}\Big) = \widehat{\phi}(\triangle_k) = \phi(\triangle_k).
\end{aligned}
\end{equation}
We see from \eqref{eq:fejer} and the nonnegativity of $\dist(x^k,\,F)$ that the sequence $\{\dist(x^k,\,F)\}$ converges to some $c^*\ge 0$. Letting $k\to\infty$ on both sides of \eqref{recu}, recalling $\lim_{x\to 0_+}\phi(x) = 0$ (due to $\lim_{x\to 0_+}\psi_B(x) = 0$) we have $\dist(x^k,\,F) \to c^*= 0$. Since $\{x^k\}$ is bounded, there exists a subsequence $\{x^{k_i}\}$ which converges to some point $x^*\in\E$. Therefore, $\dist(x^{k_i},\,F) \to 0$ together with the closedness of $F$ implies that $x^*\in F$. We note from the Fej\'{e}r monotonicity of $\{x^k\}$ with respect to $F$ (or \eqref{eq:fejer2}) that the nonnegative sequence $\{\|x^k - x^*\|\}$ is nonincreasing and thus convergent. This together with $\|x^{k_i} - x^*\|\to 0$  implies that $\{x^k\}$ converges to $x^*\in F = \bigcap_{i=1}^m\F\,T_i$.

We note that $\lim_{x\to 0_+}\phi(x) = 0$ and $\phi$ is  nondecreasing, thanks to the same properties of the regularity function $\psi_B$. Consequently,
\begin{equation}\label{iter_dec_one}
\begin{split}
& \Phi_{\phi}\left(\dist^2\big(x^{(k+1)s},\,F\big)\right) - \Phi_{\phi}\left(\dist^2\left(x^{ks},\,F\right)\right)   \\
& = \int_{\dist^2\left(x^{(k+1)s},\,F\right)}^{\dist^2\left(x^{ks},\,F\right)}\frac{1}{\phi^{-}(t)}dt \ge \frac{\triangle_k}{\phi^{-}\left(\dist^2\left(x^{ks},\,F\right)\right)} \ge 1,
\end{split}
\end{equation}
where the first inequality follows from the monotonicity of $\phi^{-}$ and the second inequality follows from \eqref{recu} and Lemma~\ref{inv_lemma}.
Moreover, for any $\ell > 0$, summing both sides of \eqref{iter_dec_one} for $k = 0,\ldots, \ell - l$, we obtain
\begin{equation*}
    \Phi_{\phi}\left(\dist^2\left(x^{\ell s},\,F\right)\right) - \Phi_{\phi}\left(\dist^2\left(x^{0},\,F\right)\right)  \ge \ell.
\end{equation*}
This together with the monotonicity of $\Phi_{\phi}$ and the Fej\'{e}r monotonicity of $\{x^k\}$ further implies that for any $k\in\N$,
\begin{equation*}
\begin{split}
\Phi_{\phi}\left(\dist^2\left(x^{k},\,F\right)\right) & \ge \Phi_{\phi}\left(\dist^2\big(x^{\lfloor{k/s}\rfloor \cdot s},\,F\big)\right)  \ge \Phi_{\phi}\left(\dist^2\left(x^{0},\,F\right)\right) + \lfloor{k/s}\rfloor.
\end{split} 
\end{equation*}
Notice from Lemma~\ref{pf_inv} that $\Phi_{\phi}$ is continuous and  decreasing, and therefore the usual inverse $\Phi_{\phi}^{-1}$ exists. Also,
 $\Phi_{\phi}\left(\dist^2\left(x^{0},\,F\right)\right) + \lfloor{k/s}\rfloor$ is in the interval 
$[\Phi_{\phi}\left(\dist^2\left(x^{0},\,F\right)\right),\,\Phi_{\phi}\left(\dist^2\left(x^{k},\,F\right)\right)]$, so continuity implies that it is also in the domain of 
$\Phi_{\phi}^{-1}$.
Then, applying $\Phi_{\phi}^{-1}$ to the above inequality and rearranging the terms finally 
leads to \eqref{gener_rate}. This completes the proof.
\end{proof}

\begin{remark}
In Theorem~\ref{theo:rate}, the convergence rate of the quasi-cyclic algorithm \eqref{alg_iter} is measured using the distance of the iterates to the common fixed point set $\dist(x^k,\,F)$. 
We note, however, that since $\{x^k\}$ is Fej\'er monotone, we have  $\|x^k - x^*\| \le 2\,\dist(x^k,\,F)$ from \cite[Theorem~3.3(iv)]{BB1993}. Consequently, a convergence rate  in terms of $\|x^k - x^*\|$, as was used in \cite[Theorem~3.1]{BLT17}, can also be obtained directly. 
\end{remark}

\subsection{Rates based on the index of regular variation}
\label{sec:ind}
Theorem~\ref{theo:rate} is a general result on convergence rates. However, typically 
we would like to obtain more concrete results and say, for example, whether 
the rate is linear, sublinear and etc. 
A direct application of Theorem~\ref{theo:rate} would require one to 
compute the function $\Phi_{\phi}$ and its inverse, which can be both highly nontrivial and 
devoid of closed forms.

In this paper, we will show two techniques for obtaining ``concrete'' convergence rates and avoid the direct computation $\Phi_{\phi}$ and $\Phi_{\phi}^{-1}$.
The first, presented in this subsection, is based solely on the index of regular variation of 
the error bound function $\psi_B$. We note that if we are given $\psi_B$, computing the index of regular variation is a significantly easier task, since it is just a limit computation as in Definition~\ref{def:rv}.
However, even if we have access to $\psi_B$, computing $\Phi_{\phi}^{-1}$ can be highly nontrivial.

Before we proceed we need to review more tools from regular variation.
First, we need a result that is a part of \emph{Karamata's theorem}, which tells us about the behavior of regular varying functions under taking integrals. Suppose that
$f:[a,\,\infty) \to (0,\,\infty) \in \RVp{\rho}$ is locally bounded. If 
$\sigma \geq -(\rho +1)$, then 
\begin{equation}\label{eq:karamata}
\lim _{x \to \infty} \frac{x^{\sigma+1}f(x)}{\int_{a}^x t^{\sigma}f(t)dt } = \sigma + 1 + \rho,
\end{equation}
see \cite[Theorem~1.5.11]{BGT87}. Karamata's theorem is a crucial result of this  body of theory and the case $\sigma = 0$ and $\rho > -1$ represents a remarkable property of regularly varying functions: as far as the behavior at infinity is concerned, functions in $\RV_{\rho}$ behave as polynomials of degree $\rho$ in that $\int _{a}^x f(t)dt$ is asymptotically equivalent  to $\frac{x}{\rho+1}f(x)$.  
This foreshadows why this will be useful for us and immediately suggests how to bypass the computation of the hard integral that appears in Theorem~\ref{theo:rate}.

Next, suppose that $f:(0,\,a] \to (0,\,\infty) \in \RVzp{\rho}$ is locally bounded away from zero. Then $f(1/\cdot)\in\RV_{-\rho}$ is locally bounded on $[\widehat{a},\,\infty)$ for $\widehat{a} \coloneqq 1/a$.
In order to derive analogous statements for regular variation at $0$, we make the substitution $x = 1/y$ in \eqref{eq:karamata},  and change the variable inside the integrals.
Recalling that $f(1/\cdot) \in \RVp{-\rho}$, we conclude that if 
$\sigma \geq -1+\rho$, then 
\begin{equation}\label{eq:karamata0}
\lim _{y \to 0_+} \frac{y^{-(\sigma+1)}f(y)}{\int_{y}^{a} t^{-\sigma-2}f(t)dt } = \sigma + 1 - \rho.
\end{equation}
We note that $y^{-(\sigma+1)}f(y) \in \RVz_{\rho-\sigma-1}$ and that  \eqref{asym_eq} and  \eqref{eq:karamata0} imply that if $\sigma+1-\rho > 0$, then $g$ defined by $g(y) \coloneqq {\int_{y}^{a} t^{-\sigma-2}f(t)dt } $ belongs to $\RVz_{\rho-\sigma-1}$ as well. When $\sigma = -2$, we have the following important special case:
\begin{equation}\label{eq:karamata0_3}
f \in \RVzp{\rho}\,\, \text{ and } -1 > \rho \quad \Rightarrow\quad {\int_{y}^a f(t)dt } \overset{c}{\sim} yf(y) \in \RVzp{\rho+1}.
\end{equation}
A similar result holds for $\sigma = -2$, $\rho = -1$. Let $\ell(s) \coloneqq f(1/s)s^{-1}$, so that $\ell \in \RVp{0}$, i.e., is a function of slow variation. Then, since $f$ is measurable and locally bounded away from zero, $\ell$ is measurable, locally bounded over $[1/a,\,\infty)$. In particular, $\ell$ is locally integrable over $[1/a,\,\infty)$ and we can invoke \cite[Proposition~1.5.9a]{BGT87} to conclude 
that ${\int_{1/a}^{x} s^{-1} \ell(s) ds } \in \RVp{0}$. 
Observe that  
\begin{equation}\label{eq:karamata_aux}
{\int_{1/x}^{a}  f(t)dt } = {\int_{1/a}^{x} s^{-2}f(1/s)ds } = {\int_{1/a}^{x} s^{-1} \ell(s) ds}.
\end{equation}
Therefore,  as a function of $x$, 
the left-hand-side of \eqref{eq:karamata_aux} belongs to $\RVp{0}$. Performing the substitution $x = 1/y$, we obtain the following implication:
\begin{equation}\label{eq:karamata0_4}
f \in \RVzp{-1}  \quad \Rightarrow\quad {\int_{y}^a f(t)dt } \in \RVzp{0}.
\end{equation}
Finally,  we need a result on the integral of rapidly varying functions. 
Suppose that $f:[a,\infty) \to (0,\,\infty)$ belongs to $\KRV$ and is locally bounded, then
\begin{equation}\label{eq:rav_karamata}
\lim _{x\to \infty} \frac{f(x)}{\int _{a}^x f(t)dt/t} \to \infty,
\end{equation}
see \cite[Proposition~2.6.9]{BGT87}\footnote{Strictly speaking, Proposition~2.6.9 is about function classes $BD$ and $MR_{\infty}$ which are defined in Chapter~2 therein. However, $\KRV$ functions are both $BD$ and $MR_{\infty}$ by \cite[Equation~(2.4.3) and Proposition~2.4.4]{BGT87}.}.
With that, we have the following proposition, see also \cite[Corollary~1.1]{ED13} for a related result.
\begin{proposition}\label{prop:int_rav}
	Let $f:[a,\,\infty) \to (0,\,\infty) \in \KRV$ (where $a > 0$) be a  nondecreasing function, then 
	\begin{equation*}
	 \int _{a}^x f(t) dt \in\KRV.
	\end{equation*}
	
\end{proposition}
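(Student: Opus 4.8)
The plan is to show that $g(x) \coloneqq \int_a^x f(t)\,dt$ admits a representation of the form required by Definition~\ref{def:krv}, after the usual rescaling of the domain to $[1,\infty)$. The key observation is that we can compare the ordinary integral $\int_a^x f(t)\,dt$ with the logarithmic-type integral $\int_a^x f(t)\,dt/t$ that appears in \eqref{eq:rav_karamata}. First I would note that since $f$ is nondecreasing and $f \in \KRV \subseteq \RVp{\infty}$, by \eqref{eq:potter_rapid2} we have $f(x) \to \infty$, so $g(x) \to \infty$ as well and $g$ is (strictly) increasing and continuous; in particular $g$ is locally bounded. The idea is then to write, for $x$ large,
\begin{equation*}
g(x) = \exp\left\{ \ln g(x_0) + \int_{x_0}^x \frac{g'(t)}{g(t)}\,dt \right\} = \exp\left\{ \ln g(x_0) + \int_{x_0}^x \frac{f(t)}{g(t)}\cdot\frac{t\,dt}{t} \right\},
\end{equation*}
using $g' = f$ (valid a.e.\ since $f$ is monotone hence locally integrable, and $g$ is absolutely continuous on compacts). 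Setting $\xi(t) \coloneqq t f(t)/g(t) = t f(t)/\int_a^t f(s)\,ds$, the representation in Definition~\ref{def:krv} holds with $z \equiv \ln g(x_0)$ (a constant, hence nondecreasing), $\eta \equiv 0$, and this $\xi$. So the entire statement reduces to showing $\xi(t) \to \infty$ as $t \to \infty$.

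The main obstacle — and the crux of the proof — is precisely establishing $\xi(t) = t f(t)/\int_a^t f(s)\,ds \to \infty$. Here I would exploit \eqref{eq:rav_karamata}, which gives $f(t) \big/ \int_a^t f(s)\,ds/s \to \infty$. The comparison I need is between $\int_a^t f(s)\,ds$ and $t\int_a^t f(s)\,ds/s$. Since $s \le t$ on the range of integration, we trivially have $\int_a^t f(s)\,ds/s \ge (1/t)\int_a^t f(s)\,ds$, i.e. $t\int_a^t f(s)\,ds/s \ge \int_a^t f(s)\,ds$, which points the wrong way. Instead I would split the integral at $\sqrt{at}$ (or at $t/K$ for fixed large $K$): on $[t/K, t]$ we have $1/s \le K/t$, contributing at most $(K/t)\int_a^t f(s)\,ds$ to $\int_a^t f(s)\,ds/s$; on $[a, t/K]$, using that $f$ is nondecreasing, $\int_a^{t/K} f(s)\,ds/s \le f(t/K)\ln(t/(Ka))$, while $\int_a^t f(s)\,ds \ge \int_{t/K}^{t} f(s)\,ds \ge f(t/K)\cdot t(1 - 1/K)$. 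Combining, $\int_a^t f(s)\,ds/s \le \big(K/t + \ln(t/(Ka))/(t(1-1/K))\big)\int_a^t f(s)\,ds$, i.e.
\begin{equation*}
\frac{t\int_a^t f(s)\,ds/s}{\int_a^t f(s)\,ds} \le K + \frac{\ln(t/(Ka))}{1 - 1/K} = O(\ln t).
\end{equation*}
Hence $\xi(t) = t f(t)/\int_a^t f(s)\,ds = \big(f(t)\big/\int_a^t f(s)\,ds/s\big)\cdot\big(t\int_a^t f(s)\,ds/s \big/ \int_a^t f(s)\,ds\big)^{-1}$; the first factor tends to $\infty$ by \eqref{eq:rav_karamata} and the second is bounded below by $1/(K + O(\ln t))$. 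So $\xi(t) \gtrsim f(t)/(\int_a^t f(s)\,ds/s \cdot \ln t)$, and I must argue this still diverges. This works because \eqref{eq:krv} says $f$ grows faster than any power, so $f(t)\big/\int_a^t f(s)\,ds/s$ grows faster than any power too (it is itself related to a rapidly varying quantity), which comfortably dominates the $\ln t$ factor — I would make this precise by applying \eqref{eq:rav_karamata} and \eqref{eq:potter_rapid2} quantitatively, e.g. picking $r > 0$ and noting the ratio eventually exceeds $t^r$, which beats $\ln t$.

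Finally, I would assemble the pieces: having shown $g(x) = \int_a^x f(t)\,dt$ satisfies the defining representation of $\KRV$ on $[x_0,\infty)$ for suitable $x_0$, I invoke the domain-flexibility discussion in Remark~\ref{rem:dom} (and the fact that $\KRV$ membership is stated up to restriction/extension of the domain to $[1,\infty)$, per Definition~\ref{def:krv}) to conclude $\int_a^x f(t)\,dt \in \KRV$. An alternative, possibly cleaner, route that I would keep in reserve: use \eqref{eq:krv} to get $f \in \RVp{\infty}$, then try to show directly that $g \in \RVp{\infty}$ and $g$ is nondecreasing, and apply \eqref{eq:krv_mon}; showing $g \in \RVp{\infty}$ means $\lim_{x\to\infty} g(\lambda x)/g(x) = \infty$ for $\lambda > 1$, which again would follow from the rapid growth of $f$ via a sandwiching argument similar to the one above — but the representation-based approach has the advantage of directly producing the witnessing functions $z, \eta, \xi$, so I would lead with that.
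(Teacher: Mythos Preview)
Your representation approach is sound, but you trip over the very inequality that finishes the argument. You correctly observe that $1/s \ge 1/t$ on $[a,t]$ gives $\int_a^t f(s)\,ds/s \ge (1/t)\int_a^t f(s)\,ds$, i.e.\ $\int_a^t f(s)\,ds \le t\int_a^t f(s)\,ds/s$. You then say this ``points the wrong way''. It points exactly the right way: you want a \emph{lower} bound on $\xi(t) = t f(t)/\int_a^t f(s)\,ds$, so you want an \emph{upper} bound on the denominator. The inequality gives precisely
\[
\xi(t) = \frac{t f(t)}{\int_a^t f(s)\,ds} \ge \frac{t f(t)}{t\int_a^t f(s)\,ds/s} = \frac{f(t)}{\int_a^t f(s)\,ds/s} \to \infty
\]
by \eqref{eq:rav_karamata}. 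No splitting, no $\ln t$ factor, no appeal to \eqref{eq:potter_rapid2} needed. Once this is repaired, your representation $g(x) = \exp\{\ln g(x_0) + \int_{x_0}^x \xi(t)\,dt/t\}$ with $\xi(t)\to\infty$ directly yields $g \in \KRV$.

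The paper takes your ``alternative route'': it shows $F \in \RVp{\infty}$ by bounding $F(\lambda x)/F(x) \ge 1 + (\lambda-1)\,xf(x)/F(x)$ for $\lambda>1$, and then uses exactly the same inequality $xf(x)/F(x) \ge f(x)\big/\int_a^x f(t)\,dt/t$ together with \eqref{eq:rav_karamata} to conclude; monotonicity of $F$ and \eqref{eq:krv_mon} then give $F \in \KRV$. So the two routes share the same one-line core. Your detour through the split at $t/K$ is unnecessary, and its final step---that $f(t)\big/\int_a^t f(s)\,ds/s$ dominates $\ln t$---is asserted rather than proved (it does not follow from \eqref{eq:rav_karamata} alone, and your invocation of \eqref{eq:potter_rapid2} would need a separate argument that this ratio is itself rapidly varying).
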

\begin{proof}
	Let $\lambda > 1$ and let 
	$F(x)  \coloneqq \int _{a}^x f(t) dt$, ($x > a$). Then
	\begin{equation*}
	F(\lambda x) - F(x) = \int _{x}^{\lambda x} f(t) dt \geq (\lambda x - x) f(x),
	\end{equation*}
	where the inequality follows from the monotonicity of $f$. Dividing by $F(x)$ and readjusting the terms, we have
	\begin{equation}\label{eq:int_rav_aux}
	\frac{F(\lambda x)}{F(x)} \geq 1+ (\lambda-1)\frac{x f(x)}{F(x)}
	\end{equation}
	Since $1/t \geq 1/x$ for $t \in [a,\,x]$, we have${\int _{a}^x f(t)dt/t}\geq {\int _{a}^x f(t)dt/x}  = \frac{F(x)}{x} $ and therefore,
	\begin{equation}\label{eq:aux_kr}
	\frac{x f(x)}{F(x)} \geq \frac{f(x)}{\int _{a}^x f(t)dt/t}.
	\end{equation}
	By \eqref{eq:rav_karamata} the right-hand-side of \eqref{eq:aux_kr} goes to $\infty$ as $x \to \infty$. Thus, the right-hand-side of \eqref{eq:int_rav_aux} goes to $\infty$ as $x \to \infty$.
	Consequently, $F(\lambda x)/F(x)$ goes to $\infty$ when $x \to \infty$ and $\lambda > 1$. For $\lambda < 1$, 
	we have $1/\lambda > 1$ and  $\lim _{x \to \infty} F(\lambda x)/F(x) = \lim _{y \to \infty} F(y)/F(y/\lambda) = 0$ by what was just proved. This shows that $F \in \RVp{\infty}$.
	Since $F$ is the integral of a positive monotone function, it must be  nondecreasing as well\footnote{As a reminder, $F(y)- F(x) = \int _{x}^y f(t) dt \geq f(x)(y-x) > 0$, if $y > x$.}, so, in fact, 
	$F \in \KRV$ by \eqref{eq:krv_mon}.
\end{proof}

%

We now have all pieces for our first results on the asymptotic properties of $\Phi_f$.

\begin{theorem}[Index of regular variation and asymptotic behavior of $\Phi_f$]\label{theo:rv}
	Let $f:(0,\,a] \to (0,\,\infty)$ be a  nondecreasing function in $\RV^0_{\rho}$ for 
	$\rho \in [0,\,1]$ such that $\lim _{t \to 0_+}f(t) = 0$ holds and consider the function $\Phi_f$ in \eqref{pf_f}.
Then the	following statements hold.
	\begin{enumerate}
		\item[{\rm (i)}] If $\rho = 0$, then $\Phi_{f} \in \RVzp{-\infty}$. In particular,  $\Phi_{f}^{-1} \in \RVp{0}$ and  for every $r > 0$, 
		$x^{-r} = o(\Phi_{\phi}^{-1}(x))$ as $x \to \infty$.
		\item[{\rm (ii)}]  If $\rho \in (0,\,1)$, then $\Phi_{f} \in \RVzp{1-1/\rho}$. In particular, $\Phi_{f}^{-1} \in \RVp{\rho/(\rho-1)}$ and 
		$\Phi_{f}^{-1}(x) = o(x^{-r})$ as $x \to \infty$ for every $0<r<-\rho/(\rho-1)$.
		\item[{\rm (iii)}]  If $\rho = 1$ and there exists some $c > 0$ such that $f(x)\ge cx$  as $x\to 0_+$,  then $\Phi_{f} \in \RVzp{0}$. 
		In particular, $\Phi_{f}^{-1} \in \RVp{-\infty}$ and for 
		every $r > 0$, $\Phi_{f}^{-1}(x) = o(x^{-r})$ as $x \to \infty$.
	\end{enumerate}
\end{theorem}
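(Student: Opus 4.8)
The idea is to reduce each claim to a chain of standard facts about regularly varying functions, using repeatedly that a positive monotone function on an interval is automatically locally bounded and locally bounded away from zero, so the hypotheses of the cited results are always met. First I would record the common setup: by Lemma~\ref{inv_lemma} the minus inverse $f^{-}$ in \eqref{inv_fun} is nondecreasing, so $1/f^{-}$ is nonincreasing, $\Phi_f$ is continuous and strictly decreasing by Lemma~\ref{pf_inv}~$(i)$, and $\Phi_f(x)\to\infty$ as $x\to 0_+$ in all three cases by Lemma~\ref{pf_inv}~$(ii)$ (case~$(a)$ covers $\rho\in[0,1)$, hence parts $(i)$ and $(ii)$, and the hypothesis added in $(iii)$ is exactly case~$(b)$). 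Moreover, for any small $a'>0$ the functions $x\mapsto\Phi_f(x)$ and $x\mapsto\int_x^{a'}\tfrac{1}{f^{-}(t)}\,dt$ differ by an additive constant while the latter stays bounded away from $0$ as $x\downarrow 0$, so they are asymptotically equivalent up to a constant; by \eqref{asym_eq} it is then enough to find the index of $x\mapsto\int_x^{a'}\tfrac{1}{f^{-}(t)}\,dt$.

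Parts $(ii)$ and $(iii)$ follow from Karamata's theorem at $0$. For $(ii)$: \eqref{eq:rvz_inv} gives $f^{-}\in\RVz_{1/\rho}$, hence $1/f^{-}\in\RVz_{-1/\rho}$ by \eqref{eq:rvz_calc}, and since $\rho<1$ we have $-1/\rho<-1$, so \eqref{eq:karamata0_3} yields $\int_x^{a'}\tfrac{1}{f^{-}(t)}\,dt\overset{c}{\sim} x/f^{-}(x)\in\RVz_{1-1/\rho}$, whence $\Phi_f\in\RVz_{1-1/\rho}$. For $(iii)$: \eqref{eq:rvz_inv} gives $f^{-}\in\RVz_{1}$, so $1/f^{-}\in\RVz_{-1}$, and \eqref{eq:karamata0_4} yields $\int_x^{a'}\tfrac{1}{f^{-}(t)}\,dt\in\RVz_{0}$, whence $\Phi_f\in\RVz_{0}$. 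For the ``in particular'' assertions I would pass to $\widehat\Phi:=\Phi_f(1/\cdot)$, which is continuous, increasing, tends to $\infty$, and by \eqref{eq:rv_rvz} lies in $\RV_{1/\rho-1}$ (part $(ii)$, positive index) resp. $\RV_{0}$ (part $(iii)$). Then \eqref{eq:rv_inv} resp. \eqref{eq:rav_inv}, together with Proposition~\ref{lb_proposition} identifying $\widehat\Phi^{\leftarrow}$ with $\widehat\Phi^{-1}$ for large arguments, gives $\widehat\Phi^{-1}\in\RV_{\rho/(1-\rho)}$ resp. $\widehat\Phi^{-1}\in\KRV\subseteq\RV_{\infty}$ (by \eqref{eq:krv}); since $\widehat\Phi^{-1}=1/\Phi_f^{-1}$, this becomes $\Phi_f^{-1}\in\RV_{\rho/(\rho-1)}$ resp. $\Phi_f^{-1}\in\RV_{-\infty}$. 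The $o(\cdot)$ estimates are then immediate: in $(ii)$, $x^{r}\Phi_f^{-1}(x)\in\RV_{\rho/(\rho-1)+r}$ has negative index for $0<r<-\rho/(\rho-1)$ and hence tends to $0$ by \eqref{eq:rv_inf2}; in $(iii)$, \eqref{eq:potter_rapid} gives $\Phi_f^{-1}(x)\le x^{-r-1}$ for large $x$.

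Part $(i)$ is the delicate case and where I expect the real work. Here \eqref{eq:ravz_inv} gives $f^{-}\in\RVz_{\infty}$, so $1/f^{-}\in\RVz_{-\infty}$ and, by \eqref{eq:rv_rvz}, $g:=1/f^{-}(1/\cdot)\in\RV_{\infty}$; since $f^{-}$ is nondecreasing so is $g$, hence $g\in\KRV$ by \eqref{eq:krv_mon}. The substitution $t=1/s$ turns $\Phi_f(1/y)$ into $\Psi(y):=\int_{1/\delta}^{y}\tfrac{g(s)}{s^{2}}\,ds$ for $y>1/\delta$, so it suffices to show $\Psi\in\RV_{\infty}$, which gives $\Phi_f\in\RVz_{-\infty}$ via \eqref{eq:rv_rvz}. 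Write $h(s):=g(s)/s^{2}$, which lies in $\KRV$ and is locally bounded by \eqref{eq:krv_power}. The crucial step is the estimate $\Psi(y)=o\big(g(y)/y\big)$, obtained \emph{without} integrating $h$ directly: since $s\le y$ throughout the range of integration, $\Psi(y)=\int_{1/\delta}^{y}s\cdot\tfrac{h(s)}{s}\,ds\le y\int_{1/\delta}^{y}\tfrac{h(s)}{s}\,ds$, and by \eqref{eq:rav_karamata} the last integral is $o(h(y))$, so $\Psi(y)\le y\cdot o(h(y))=o(yh(y))=o(g(y)/y)$. Granting this, for $\lambda>1$ the monotonicity of $g$ gives $\Psi(\lambda y)-\Psi(y)\ge g(y)\int_{y}^{\lambda y}\tfrac{ds}{s^{2}}=\tfrac{\lambda-1}{\lambda}\cdot\tfrac{g(y)}{y}$, hence $\tfrac{\Psi(\lambda y)}{\Psi(y)}\ge 1+\tfrac{\lambda-1}{\lambda}\cdot\tfrac{g(y)}{y\Psi(y)}\to\infty$; the case $\lambda\in(0,1)$ follows on taking reciprocals, so $\Psi\in\RV_{\infty}$. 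For the remaining claims, $\widehat\Phi:=\Phi_f(1/\cdot)=\Psi$ is continuous, increasing and tends to $\infty$, so \eqref{eq:rav_inv2} and Proposition~\ref{lb_proposition} give $\widehat\Phi^{-1}=\widehat\Phi^{\leftarrow}\in\RV_{0}$, i.e. $\Phi_f^{-1}=1/\widehat\Phi^{-1}\in\RV_{0}$ by \eqref{eq:rv_calc}; and $x^{r}\Phi_f^{-1}(x)\in\RV_{r}$ tends to $\infty$ by \eqref{eq:rv_inf} for every $r>0$, which is exactly $x^{-r}=o(\Phi_f^{-1}(x))$.

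The only genuinely delicate point is thus the bound $\Psi(y)=\int_{1/\delta}^{y}h(s)\,ds=o(g(y)/y)$ in part $(i)$: because $h=g/s^{2}$ need not be monotone one cannot appeal to a monotone-$\KRV$ integration result like Proposition~\ref{prop:int_rav}, and the device is to factor out a single power of $s$ and use the scale-invariant estimate \eqref{eq:rav_karamata}, which requires only $h\in\KRV$. Everything else is bookkeeping: verifying that monotonicity of $f$ and $f^{-}$ supplies all the local-boundedness hypotheses, that shifting the upper limit $\delta$ to $a'$ only changes $\Phi_f$ by a constant absorbed by \eqref{asym_eq}, and that $\widehat\Phi^{\leftarrow}$ equals $\widehat\Phi^{-1}$ where needed via Proposition~\ref{lb_proposition}.
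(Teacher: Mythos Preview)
Your argument is correct. For parts~(ii) and~(iii) you follow essentially the same route as the paper: determine the index of $1/f^{-}$ via \eqref{eq:rvz_inv}, apply Karamata's theorem at zero (\eqref{eq:karamata0_3} or \eqref{eq:karamata0_4}) to obtain the index of $\Phi_f$, and then pass to the inverse through $\widehat\Phi=\Phi_f(1/\cdot)$ using \eqref{eq:rv_inv}/\eqref{eq:rav_inv} together with Proposition~\ref{lb_proposition}. The small bookkeeping step of replacing $\delta$ by $a'$ is not needed in the paper (Remark~\ref{rem:dom} handles domain issues), but it is harmless.

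For part~(i) your approach is genuinely different from the paper's, and the difference is worth recording. Both arguments reduce to showing that $\Psi(y)=\int_{1/\delta}^{y}t^{-2}\,u(t)\,dt$ lies in $\RV_{\infty}$ (equivalently $\KRV$), where $u(t)=1/f^{-}(1/t)$ is nondecreasing and belongs to $\KRV$. The paper notes that $u$ equals $g^{\leftarrow}$ for $g=1/f(1/\cdot)\in\RV_0$ (via \eqref{eq:rvz_inv_def} and \eqref{eq:rav_inv}), so $t^{-2}u(t)\in\KRV$ by \eqref{eq:krv_power}, and then invokes Proposition~\ref{prop:int_rav} directly. That proposition, however, asks for a \emph{nondecreasing} integrand, and $t^{-2}u(t)$ is not manifestly monotone; one has to observe (from the $\KRV$ representation) that it is eventually nondecreasing and restrict the domain accordingly. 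You instead bypass Proposition~\ref{prop:int_rav} altogether: you bound $\Psi(y)\le y\int_{1/\delta}^{y}h(s)/s\,ds$ with $h=t^{-2}u\in\KRV$, apply \eqref{eq:rav_karamata} (which needs only $\KRV$ and local boundedness, not monotonicity) to get $\Psi(y)=o(u(y)/y)$, and then combine this with the elementary lower bound $\Psi(\lambda y)-\Psi(y)\ge\frac{\lambda-1}{\lambda}\,u(y)/y$ coming from monotonicity of $u$ alone. This is a clean way to avoid the monotonicity check on the full integrand; the paper's route is shorter once that check is made. A minor phrasing point: local boundedness of $h$ does not follow from \eqref{eq:krv_power} but from the fact that $u$ is monotone and $s^{-2}$ is continuous on $[1/\delta,\infty)$.
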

\begin{proof}
	Let $F : [1,\,\infty)\to (0,\,\infty)$ be such that
	\begin{equation}\label{eq:F}
	F(x) \coloneqq \Phi_f(1/x).
	\end{equation}
	By Lemma~\ref{pf_inv},  $F$ is  increasing and continuous, and thus locally bounded. Furthermore,  $F^{-1}(y) = 1/(\Phi_{f}^{-1}(y))$ holds for $y \in [1,\,\infty)$.
	
	We start with item~(i), where $\rho = 0$.
	Let $g \coloneqq 1/f(1/\cdot)$. Then, we see from \eqref{eq:rv_rvz} and \eqref{eq:rv_calc} that $g \in \RVp{0}$. Due to the monotonicity of $f$, both $f$ and $g$ are locally bounded. Moreover,  $g(t)$ goes to $\infty$ as 
	$t \to \infty$. By \eqref{eq:rav_inv}, 
	$g^{\leftarrow} \in \KRV$ and by 
	\eqref{eq:rvz_inv_def}  we have
	\begin{equation*}
	f^{-}(x) = \frac{1}{g^{\leftarrow}(1/x)}.
	\end{equation*}
	We note that $F$ can be written as follows
	\begin{equation*}
	F(y) = \int _{1/y}^\delta \frac{1}{f^{-}(s)}   ds  = \int _{1/\delta}^y \frac{t^{-2}}{f^{-}(1/t)  } dt =  \int _{1/\delta}^y t^{-2}g^{\leftarrow}(t) dt,
	\end{equation*}
	where the second equality is obtained by the substitution $s= t^{-1}$.
	Then, 	by \eqref{eq:krv_power}, $(\cdot)^{-2}g^{\leftarrow}(\cdot)$ belongs to $\KRV   $
	and invoking Proposition~\ref{prop:int_rav}, we conclude that $F$ belongs to $\KRV$
	which implies that $\Phi_{f} \in \RVzp{-\infty}$ by  \eqref{eq:rv_rvz} and \eqref{eq:F}.
	This proves the first half of item~(i).

	Due to \eqref{eq:potter_rapid2} and \eqref{eq:krv}, $F(x)$ goes to $\infty$ as $x\to\infty$. 
	Since $F$ belongs to $\KRV$ we further conclude that $F^{\leftarrow} \in \RVp{0}$ by \eqref{eq:rav_inv2}. Since $F^{\leftarrow}(y) = F^{-1}(y)$ holds for large $y$ by Proposition~\ref{lb_proposition}, 
	we see that $\Phi_{f}^{-1} \in \RVp{0} $ by  \eqref{eq:rv_calc}.
	
	Next, let $r > 0$ and let $h$ be the function given 
	by $h(x) = x^{-r}/\Phi_{f}^{-1}(x) $. By the calculus rules in \eqref{eq:rv_calc}, $h \in \RVp{-r}$.
	Since the index of $h$ is negative,  $h(x)$ goes to $0$ as $x \to \infty$ by  \eqref{eq:rv_inf2}. That is, 
	$x^{-r} = o(\Phi_{f}^{-1}(x))$ as $x \to \infty$. This concludes the proof of item~(i).

	We now move on to the case $\rho \in (0,\,1]$. By \eqref{eq:rvz_inv}, $1/f^{-}$ is regularly varying at $0$ with index $-1/\rho$. We note $1/f^{-}$ is monotone by Lemma~\ref{inv_lemma} and hence locally bounded away from zero.
	Since $-1 \geq -1/\rho$ and $\Phi_f(x) =  \int _{x}^\delta \frac{1}{f^{-}(s)} ds$, 
	we have 	$\Phi_{f} \in \RVzp{1-1/\rho}$, which follows from	\eqref{eq:karamata0_3} and \eqref{eq:karamata0_4}. This proves the first halves of items~(ii) and (iii).
	
	By \eqref{eq:F} and \eqref{eq:rv_rvz}, $F \in \RVp{1/\rho - 1}$.
	We now verify that $F \to \infty$ as $x \to \infty$.
	If $\rho \in (0,\,1)$ it follows from \eqref{eq:rv_inf} and 
	if $\rho = 1$ it follows from the assumption 
	in item~(ii) and Lemma~\ref{pf_inv}~(b). 
	The conclusion is that in either case, we have $F^{\leftarrow}(y) = F^{-1}(y)$ for large $y$, by Proposition~\ref{lb_proposition}.
	By the calculus rule  for the inverse in \eqref{eq:rv_inv} and \eqref{eq:rav_inv}, together with \eqref{eq:rv_calc} and the definition of $\RV_{-\infty}$ in Definition~\ref{def:rv}, we conclude that 
	$\Phi_{f}^{-1} = 1/F^{-1} $ belongs to $\RVp{\rho/{(\rho-1)}}$ if $\rho \in (0,\,1)$ and to $\RVp{-\infty}$ if $\rho = 1$.
	
	Suppose that $\rho \in (0,\,1)$ and let $r $
	be such that $0 < r < -\rho/(\rho-1)$. Let $\tau > 0$ 
	be such that $r+\tau <  -\rho/(\rho-1) $. The index of $\Phi_{f}^{-1}$ is $\rho/(\rho-1)$ and we apply Potter's bounds (see \eqref{eq:potter_rv}) to
	$\Phi_{f}^{-1}$ with $\epsilon = -\rho/(\rho-1)-r-\tau$. Fixing $y$, we see that for large $x$, we have
	\begin{equation*}
	\Phi_{f}^{-1}(x) \leq \hat{A} x^{-r-\tau},
	\end{equation*}
	where $\hat{A}$ is some fixed constant. This implies that 
	$\Phi_{f}^{-1}(x)/ (x^{-r}) \leq \hat{A} x^{-\tau}$ and that the quotient goes to $0$ as $x\to \infty$. Therefore, $\Phi_{f}^{-1}(x) = o(x^{-r})$. This concludes the proof of item~(ii).
	
	Next, suppose that $\rho = 1$ and let $r > 0$. We apply 
	\eqref{eq:potter_rapid} to $\Phi_{f}^{-1}$.
	Since \eqref{eq:potter_rapid}  is valid for all $r > 0$, it is valid in particular for $2r$. So for sufficiently large $x$, we have $\Phi_{f}^{-1}(x)/{x^{-r}}\leq x^{-r}$, which implies that the quotient goes to $0$ as $x \to \infty$.
	Therefore, $\Phi_{f}^{-1}(x) = o(x^{-r})$ and this proves item~(iii).
	This completes the proof.
\end{proof}

Using Theorem~\ref{theo:rv} we can analyze the quasi-cyclic iteration as follows.

\begin{theorem}[Index of regular variation and convergence rates]\label{theo:rv_old}
	Under the setting of Theorem~\ref{theo:rate}, let $\rho$ denote the index of $\psi_B|_{(0,\,a]}$ and suppose that the convergence is not finite.
	Then	 the following statements hold.
	\begin{enumerate}
		\item[{\rm (i)}] If $\rho = 0$, then $\Phi_{\phi} \in \RVzp{-\infty}$. In particular,  $\Phi_{\phi}^{-1} \in \RVp{0}$ and  for every $r > 0$, 
		$k^{-r} = o(\Phi_{\phi}^{-1}(k))$ as $k \to \infty$.
		\item[{\rm (ii)}]  If $\rho \in (0,\,1)$, then $\Phi_{\phi} \in \RVzp{1-1/\rho}$. In particular, $\Phi_{\phi}^{-1} \in \RVp{\rho/(\rho-1)}$ and 
		$\Phi_{\phi}^{-1}(k) = o(k^{-r})$ as $k \to \infty$ for every $0<r<-\rho/(\rho-1)$.
		\item[{\rm (iii)}]  Suppose that $\rho = 1$  and $B$ is closed and connected. 
		Then, $\Phi_{\phi} \in \RVzp{0}$. 
		In particular, $\Phi_{\phi}^{-1} \in \RVp{-\infty}$ and for 
		every $r > 0$, $\Phi_{\phi}^{-1}(k) = o(k^{-r})$ as $k \to \infty$.
	\end{enumerate}
\end{theorem}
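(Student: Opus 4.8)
The plan is to reduce everything to Theorem~\ref{theo:rv} applied to the function $f \coloneqq \phi$ from \eqref{def_pf}, after checking that $\phi$ inherits from $\psi_B$ all the structural hypotheses required there (note that $\Phi_\phi$ in \eqref{def_pf} is literally $\Phi_f$ from \eqref{pf_f} with $f=\phi$, so Theorem~\ref{theo:rv}'s conclusions about $\Phi_f$ and $\Phi_f^{-1}$ transfer directly). Write $\kappa \coloneqq 2\alpha(1+4\nu s)/(\nu(1-\alpha)) > 0$, so that $\phi$ is the restriction of $u \mapsto \psi_B^2(\sqrt{\kappa u})$ to $(0,\widehat a]$. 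First I would record that $\phi$ is positive on $(0,\widehat a]$ (by Remark~\ref{dom_rv0}), nondecreasing (being a composition of the nondecreasing maps $t\mapsto\psi_B(t)^2$ and $u\mapsto\sqrt{\kappa u}$), and satisfies $\lim_{u\to 0_+}\phi(u)=\psi_B(0)^2=0$; all of these are immediate from Definition~\ref{def_jcrv}. For the index, $\psi_B|_{(0,a]}\in\RVzp{\rho}$ gives $\psi_B^2\in\RVzp{2\rho}$ by \eqref{eq:rvz_calc}, and since $u\mapsto\sqrt{\kappa u}$ lies in $\RVzp{1/2}$ and tends to $0$ as $u\to 0_+$, the composition rule in \eqref{eq:rvz_calc} yields $\phi\in\RVzp{\rho}$ on a sufficiently small interval. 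Hence $\phi$ satisfies every hypothesis of Theorem~\ref{theo:rv} when $\rho\in[0,1)$; for $\rho=1$ the only remaining thing to verify is the linear lower bound $\phi(u)\ge cu$ near $0$.

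Items~(i) and~(ii) then follow by applying Theorem~\ref{theo:rv}(i) (with $\rho=0$) and~(ii) (with $\rho\in(0,1)$) verbatim, since neither needs an extra hypothesis: these give $\Phi_\phi\in\RVzp{-\infty}$ and $\Phi_\phi\in\RVzp{1-1/\rho}$ respectively, together with the asserted membership of $\Phi_\phi^{-1}$ and the little-$o$ comparisons. The only bookkeeping point is that Theorem~\ref{theo:rv} phrases the comparisons as $x\to\infty$ over the reals whereas here $k$ ranges over $\N$; but a little-$o$ relation along $\R$ restricts to one along $\N$, and $\RV$-membership of $\Phi_\phi^{-1}$ is a property of the function itself, so nothing is lost.

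The substantive part is item~(iii), where $\rho=1$ and we are additionally told that $B$ is closed and connected. I would use this to establish $\phi(u)\ge (\kappa/4)\,u$ for all sufficiently small $u>0$ and then quote Theorem~\ref{theo:rv}(iii) with $c=\kappa/4$. The elementary ingredient is the residual estimate $\max_{1\le i\le m}\|x-T_i(x)\|\le 2\dist(x,F)$ for every $x$: for any $y\in F=\bigcap_{i}\F\,T_i$ each $T_i$ fixes $y$, so nonexpansiveness gives $\|x-T_i(x)\|=\|(x-y)-(T_i(x)-T_i(y))\|\le 2\|x-y\|$, and minimizing over $y\in F$ finishes it. Since the convergence is not finite we have $x^0\in B\setminus F$, hence $\dist(x^0,F)>0$; on the other hand Theorem~\ref{theo:rate} produces $x^\ast=\lim_k x^k\in F$, and $x^\ast\in B$ because $B$ is closed, so $\dist(x^\ast,F)=0$. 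As $\dist(\cdot,F)$ is continuous and $B$ is connected, the set $\{\dist(x,F)\mid x\in B\}$ is an interval containing both $0$ and $\dist(x^0,F)$; thus for every $\varepsilon\in(0,\dist(x^0,F)]$ there is $x_\varepsilon\in B$ with $\dist(x_\varepsilon,F)=\varepsilon$. For such a point the Karamata-regularity error bound \eqref{eq:def_jcrv} for the $T_i$ over $B$, the monotonicity of $\psi_B$, and the residual estimate give $\psi_B(2\varepsilon)\ge\psi_B(\max_i\|x_\varepsilon-T_i(x_\varepsilon)\|)\ge\dist(x_\varepsilon,F)=\varepsilon$, i.e.\ $\psi_B(t)\ge t/2$ on $(0,2\dist(x^0,F)]$. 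Substituting $t=\sqrt{\kappa u}$ yields $\phi(u)=\psi_B^2(\sqrt{\kappa u})\ge \kappa u/4$ for $u$ small, which is exactly the hypothesis of Theorem~\ref{theo:rv}(iii); applying it gives $\Phi_\phi\in\RVzp{0}$, $\Phi_\phi^{-1}\in\RVp{-\infty}$, and $\Phi_\phi^{-1}(k)=o(k^{-r})$ for every $r>0$.

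I expect the residual-versus-distance estimate together with the intermediate-value argument on $\dist(\cdot,F)|_B$ to be the main obstacle in case~(iii); the other two cases are a direct appeal to Theorem~\ref{theo:rv} once the index of $\phi$ has been identified. A point worth flagging is that connectedness of $B$ is genuinely needed there, since $\psi_B\in\RVzp{1}$ alone does not force a linear lower bound (for instance $\psi_B(t)=t/\ln(1/t)$ is in $\RVzp{1}$ yet is dominated by every $ct$ near $0$), which is precisely why Theorem~\ref{theo:rv}(iii), and hence the present item~(iii), carries the extra hypothesis.
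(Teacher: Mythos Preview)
Your proposal is correct and follows essentially the same route as the paper: reduce to Theorem~\ref{theo:rv} after checking that $\phi$ inherits the required properties from $\psi_B$, and for item~(iii) establish the linear lower bound on $\psi_B$ via an intermediate-value argument on $B$. The only cosmetic difference is that in~(iii) the paper runs the intermediate-value argument on the residual $d(y)=\max_i\|y-T_i(y)\|$ (which appears directly as the argument of $\psi_B$ in the chain $d(y)/2\le\dist(y,F)\le\psi_B(d(y))$), whereas you run it on $\dist(\cdot,F)$ and then invoke monotonicity of $\psi_B$; both yield $\psi_B(t)\ge t/2$ near $0$ with the same constant.
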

\begin{proof}
	By the calculus rules in \eqref{eq:rvz_calc}, we see from $\psi_B|_{(0,\,a]}\in\RVz_{\rho}$ and the definition of $\phi$ in \eqref{def_pf} that $\phi$ has index $\rho$. Moreover, $\phi$ is  nondecreasing with $\lim_{t\to0_+}\phi(t) = 0$, thanks to the monotonicity of $\psi_B$ and $\lim_{t\to0_+}\psi_B(t) = 0$ in Definition~\ref{def_jcrv}.
	With that, items (i), (ii)  are a consequence of Theorem~\ref{theo:rv} applied to $f \coloneqq \phi$.

	Item~(iii) also follows from Theorem~\ref{theo:rv}, but we need to check that the assumption that there exists $c > 0$ such that $\phi(t)\ge ct$ holds as $t$ goes to $0_+$. Let $d(y) \coloneqq \max_{1\leq i \leq m}{\norm{y-T_i(y)}}$. 
Fixing any $y\in B$, we let $z \coloneqq P_{F}(y)$ and note from $F:=\bigcap_{i=1}^m\F\,T_i$ that $z = T_i(z)$ holds for all $i\in\{1,\ldots,m\}$, which together with the nonexpansiveness of $T_i$  (see Lemma~\ref{lm_alpha_av}) implies that for all $i$,
	\begin{equation*}
	\norm{y-T_i(y)} = \norm{y - z +z - T_i(y)} \leq 2\norm{y-z} = 2\,\dist(y,\,F).
	\end{equation*}
	This together with the arbitrariness of $i$ and the definition of $\psi_B$ further implies that
	\[
 d(y)/2 \leq \dist(y,\,F) \leq \psi_B(d(y)), \ \forall\ y \in B.
	\] 
	By assumption, $B$ is closed, so it contains the limiting point $x^*$ which satisfies $d(x^*) = 0$. Also because the convergence is assumed to be not finite, no $x^k$ can be a common fixed point. In particular, since $B$ contains the sequence $\{x^k\}$, there exists at least another $\bar{y} \in B$ with $d(\bar{y}) > 0$. 	Now, $d(\cdot)$ is a continuous function and $B$ is connected so $d(B)$ is a connected subset of $\R$, i.e., $d(B)$ is an interval. In particular, over $B$, $d$ assumes all values between $0$ and $d(\bar{y})$. This tells us that 
	\[
	t/2 \leq \psi_B(t)
	\]
	holds for all sufficiently small $t$. Letting $\kappa \coloneqq \sqrt{2\alpha(1+4\nu s)/(\nu(1-\alpha))}$ and recalling the definition of $\phi$, we have
	\[
	\sqrt{\phi(t)} = \psi_B(\kappa \sqrt{t}) \geq \kappa \sqrt{t}/2,
	\]
	for small $t$. This implies that $\phi(t) \geq \kappa^2 t/4$ holds for small $t$, which is what we wanted to show.
\end{proof}
The three cases in Theorem~{\ref{theo:rv_old}} can be interpreted as follows.
When $\rho = 1$, the convergence of the quasi-cyclic algorithm is \emph{almost linear}, which means that, for any $r$, the iterates of algorithm converges to a fixed point faster than $k^{-r}$ goes to $0$ as $k \to \infty$. This includes cases where the convergence is, in fact, linear but it also includes the possibility that the convergence is slower than linear (i.e., slower than $c^{-k}$ for any $c$) as observed empirically in \cite[Figure~1]{LL20}.
When $\rho  \in (0,\,1)$, the convergence rate is at least sublinear and is faster than $s^{-r}$ for all $r$ with $0 < r < -\rho/2(\rho-1)$, which it means that the converge rate is \emph{almost} the rate that would be afforded if $\psi_B$ were ``purely H\"olderian'' of the form  $t^\rho$ as we will discuss in Section~\ref{sec:previous}.   

The case $\rho = 0$ is the least informative, which only tells us that $\Phi_{\phi}^{-1}$ is going to $0$ slower than any sublinear rate, where we recall that $\Phi_{\phi}^{-1}$ gives an \emph{upper bound} on the true convergence rate. So, in essence, we are getting a lower bound to an upper bound, which is only helpful if we suspect that the upper bound is actually tight. 
In order to get more information on the case $\rho = 0$, we need extra assumptions as in Theorem~\ref{theorem_int}.

\subsection{Tighter rates}
In this section, we discuss a tool to obtain tighter rates than the one described in 
Theorem~\ref{theo:rv}.
The drawback is that although we do not need to compute the integral appearing in 
\eqref{pf_f}, we still need to be able to say something about a certain function 
$g$ that will appear in Theorem~\ref{theorem_int}.
We start with the following lemma about preservation of asymptotic equivalence under 
the arrow inverse.

\begin{lemma}\label{lemma_inv_eq}
Suppose that $f_1:[a,\,\infty) \to (0,\,\infty)$ and $f_2:[b,\,\infty) \to (0,\,\infty)$ are measurable locally bounded functions such that $f_1\in\RV_{\rho}$ with $\rho > 0$. If $f_1(t) \sim f_2(t)$ as $t \to \infty$, then $f_1^{\leftarrow}(t)\sim f_2^{\leftarrow}(t)$ as $t \to \infty$. 
Similarly, if $f_1 \overset{c}{\sim} f_2$, then $f_1^{\leftarrow}\overset{c}{\sim} f_2^{\leftarrow}$.
\end{lemma}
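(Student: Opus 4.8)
The plan is to use the definition of the arrow inverse in \eqref{eq:arrow} together with the index-$1/\rho$ regular variation of $f_1^{\leftarrow}$ guaranteed by \eqref{eq:rv_inv}. First I would record that since $f_1 \in \RV_{\rho}$ with $\rho > 0$, \eqref{eq:rv_inf} gives $f_1(t) \to \infty$, and the asymptotic equivalence $f_1 \sim f_2$ forces $f_2(t) \to \infty$ as well; hence both generalized inverses are well-defined on a neighborhood of $+\infty$. Moreover $f_1^{\leftarrow} \in \RV_{1/\rho}$ by \eqref{eq:rv_inv} (using local boundedness of $f_1$), and by \eqref{asym_eq_RV}, once I show $f_1^{\leftarrow} \overset{c}{\sim} f_2^{\leftarrow}$ it will follow that $f_2^{\leftarrow} \in \RV_{1/\rho}$ too. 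So the real content is the equivalence, not the membership.

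The key step is a squeeze argument. Fix $\epsilon \in (0,1)$. Since $f_1(t) \sim f_2(t)$, for all large $t$ we have $(1-\epsilon) f_1(t) \le f_2(t) \le (1+\epsilon) f_1(t)$. From the definition $f_2^{\leftarrow}(y) = \inf\{t : f_2(t) > y\}$, the upper bound $f_2(t) \le (1+\epsilon) f_1(t)$ implies $\{t : f_2(t) > y\} \subseteq \{t : (1+\epsilon) f_1(t) > y\} = \{t : f_1(t) > y/(1+\epsilon)\}$ (up to the bounded initial segment where the inequality may fail, which is harmless for large $y$ by the local-boundedness adjustment of Remark~\ref{rem:dom} and Proposition~\ref{lb_proposition}), whence $f_2^{\leftarrow}(y) \ge f_1^{\leftarrow}(y/(1+\epsilon))$ for large $y$. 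The reverse inclusion coming from $f_2(t) \ge (1-\epsilon) f_1(t)$ gives $f_2^{\leftarrow}(y) \le f_1^{\leftarrow}(y/(1-\epsilon))$ for large $y$. Therefore
\[
\frac{f_1^{\leftarrow}(y/(1+\epsilon))}{f_1^{\leftarrow}(y)} \;\le\; \frac{f_2^{\leftarrow}(y)}{f_1^{\leftarrow}(y)} \;\le\; \frac{f_1^{\leftarrow}(y/(1-\epsilon))}{f_1^{\leftarrow}(y)}
\]
for all sufficiently large $y$. Now I invoke the regular variation of $f_1^{\leftarrow}$ with index $1/\rho$: writing $y/(1\pm\epsilon) = \lambda_{\pm} y$ with $\lambda_{\pm} = 1/(1\pm\epsilon)$, \eqref{eq:rv} gives $\lim_{y\to\infty} f_1^{\leftarrow}(\lambda_{\pm} y)/f_1^{\leftarrow}(y) = \lambda_{\pm}^{1/\rho}$. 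Hence $\limsup_{y\to\infty} f_2^{\leftarrow}(y)/f_1^{\leftarrow}(y) \le (1-\epsilon)^{-1/\rho}$ and $\liminf_{y\to\infty} f_2^{\leftarrow}(y)/f_1^{\leftarrow}(y) \ge (1+\epsilon)^{-1/\rho}$. Letting $\epsilon \to 0_+$ yields $\lim_{y\to\infty} f_2^{\leftarrow}(y)/f_1^{\leftarrow}(y) = 1$, i.e. $f_1^{\leftarrow} \sim f_2^{\leftarrow}$.

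For the ``up to a constant'' version, suppose $f_1 \overset{c}{\sim} f_2$, so $f_2(t)/f_1(t) \to \mu$ for some $\mu > 0$. Then for any $\epsilon > 0$ and large $t$ one has $(\mu-\epsilon) f_1(t) \le f_2(t) \le (\mu+\epsilon) f_1(t)$, and exactly the same inclusion-of-sublevel-sets argument gives $f_1^{\leftarrow}(y/(\mu+\epsilon)) \le f_2^{\leftarrow}(y) \le f_1^{\leftarrow}(y/(\mu-\epsilon))$ for large $y$ (taking $\epsilon < \mu$). Dividing by $f_1^{\leftarrow}(y)$, using $f_1^{\leftarrow} \in \RV_{1/\rho}$ again, and letting $\epsilon \to 0_+$ gives $f_2^{\leftarrow}(y)/f_1^{\leftarrow}(y) \to \mu^{-1/\rho}$, i.e. $f_1^{\leftarrow} \overset{c}{\sim} f_2^{\leftarrow}$ with constant $\mu^{-1/\rho}$.

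I expect the main obstacle to be bookkeeping around the domains and the ``for large $t$/large $y$'' qualifiers: the equivalence $f_1 \sim f_2$ is only asymptotic, so the sublevel-set inclusions $\{t: f_2(t) > y\} \subseteq \{t: f_1(t) > \lambda y\}$ hold only after discarding a bounded initial piece of the domain, and I must argue (via Proposition~\ref{lb_proposition} and the local-boundedness normalization discussed after Remark~\ref{rem:dom}) that for $y$ large enough the infimum defining the arrow inverse is unaffected by that bounded piece. Once that is handled cleanly, the rest is the routine squeeze above.
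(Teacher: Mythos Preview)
Your proof is correct and takes a genuinely different route from the paper's. The paper's argument is essentially a one-line appeal to \cite[Proposition~1.5.15]{BGT87}: writing $f_1(t) = t^{\rho}\ell(t)$ with $\ell$ slowly varying, that proposition gives $f_1^{\leftarrow}(t) \sim t^{1/\rho}\ell^{\sharp}(t^{1/\rho})$ for a certain slowly varying $\ell^{\sharp}$ (the de~Bruijn conjugate of $\ell$), and since $f_2 \sim f_1$ the same asymptotic form holds for $f_2^{\leftarrow}$, whence $f_1^{\leftarrow} \sim f_2^{\leftarrow}$. For the $\overset{c}{\sim}$ case the paper reduces to the $\sim$ case applied to $f_1$ and $\mu f_2$, then uses the identity $(\mu f_2)^{\leftarrow}(t) = f_2^{\leftarrow}(t/\mu)$ together with $f_2^{\leftarrow} \in \RV_{1/\rho}$.

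Your squeeze argument, by contrast, is entirely self-contained: it uses only the definition of the arrow inverse, the regular variation of $f_1^{\leftarrow}$ from \eqref{eq:rv_inv}, and Proposition~\ref{lb_proposition} to discard the bounded initial segment. This avoids the de~Bruijn conjugate machinery and is arguably more transparent---in particular, the emergence of the constant $\mu^{-1/\rho}$ in the $\overset{c}{\sim}$ case is immediate from your calculation. The paper's route is shorter if one is willing to quote the BGT result as a black box; yours is more elementary and would generalize more readily to settings where that proposition is unavailable. Your closing paragraph correctly identifies the only delicate point (the domain bookkeeping), and your sketch of how Proposition~\ref{lb_proposition} handles it is accurate.
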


\begin{proof}
Let $\ell(t) \coloneqq f_1(t)/t^{\rho}$, so that $\ell \in \RV_{0}$. By assumption, 
$f_2\sim f_1$ holds, so we have $f_2(t) \sim t^{\rho}\ell(t)$ as well. 
Then, \cite[Proposition~1.5.15]{BGT87} implies that $f_1^{\leftarrow}$ and $ f_2^{\leftarrow}$ both 
satisfy 
\[
f_1^{\leftarrow}(t) \sim t^{1/\rho}\ell^{\sharp}(t^{1/\rho}) \,\, \text{and}\,\, f_2^{\leftarrow}(t) \sim t^{1/\rho}\ell^{\sharp}(t^{1/\rho}), 
\]
for a certain function $\ell^{\sharp} \in \RV_{0}$, which leads to $f_1^{\leftarrow}{\sim} f_2^{\leftarrow}$.

Now, if $f_1 \overset{c}{\sim} f_2$, then there exists a positive constant $\mu$ such that 
$f_1 {\sim} \mu f_2$. By what we just proved, we have $f_1^{\leftarrow} \sim (\mu  f_2)^{\leftarrow}$ and, by the definition of the arrow inverse \eqref{eq:arrow}, $(\mu  f_2)^{\leftarrow}(t) = f_2^{\leftarrow}(t/\mu)$ holds. Since, $f_2^{\leftarrow} \in \RV_{1/\rho}$, the definition of regular variation implies $f_2^{\leftarrow}(t/\mu)/f_2^{\leftarrow}(t) \to (1/\mu)^{1/\rho}$ as $t \to \infty$ so that $f_2^{\leftarrow}(t/\mu) \overset{c}{\sim} f_2^{\leftarrow}(t)$ as $t\to \infty$.
We conclude that $f_1^{\leftarrow} \overset{c}{\sim} f_2^{\leftarrow}$ holds.
\end{proof}
If $f,\,g \in \RVz_{\rho}$ are measurable, locally bounded away from zero and satisfy $f \sim g$ then, letting $\bar f \coloneqq 1/f(1/\cdot)$ and $\bar g \coloneqq 1/g(1/\cdot)$, the functions $\bar f, \,\bar g$ are locally bounded, belong to $\RV_{\rho}$ and satisfy $\bar f \sim \bar g$.
Recalling \eqref{eq:rvz_inv_def} and applying Lemma~\ref{lemma_inv_eq} to 
$\bar f, \,\bar g$, we conclude that $f^{-} \sim g^{-}$.
With that, we conclude that, under the stated hypothesis, if $f,\,g:(0,\,a] \to (0,\,\infty)$ are such that $f,\,g \in \RVz_{\rho}$ with $\rho > 0$ then
\begin{equation}\label{eq:inv_equiv_minus}
f \overset{c}{\sim} g \Rightarrow f^{-} \overset{c}{\sim} g^{-}.
\end{equation}

The following lemma is inspired by \cite[Theorem~1]{dt07}, but  here we relax the monotonicity assumption imposed therein. 
\begin{lemma}\label{lem:inv_o}
	Suppose that $f_1:[a,\,\infty) \to (0,\,\infty)$ and $f_2:[b,\,\infty) \to (0,\,\infty)$ are locally bounded functions such that $f_1\in \RV_{\rho}$ with $\rho \geq 0$, $f_1(t) \to \infty$, $f_2(t) \to \infty$ and $f_1(t) = o(f_2(t))$ as $t \to \infty$, then $f_2^{\leftarrow}(t) = o(f_1^{\leftarrow}(t))$ as $t \to \infty$. 
\end{lemma}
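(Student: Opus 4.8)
The plan is to mimic the structure of the proof of Lemma~\ref{lemma_inv_eq}, replacing the asymptotic-equivalence bookkeeping with the weaker relation $f_1(t)=o(f_2(t))$. First I would record the basic monotonicity fact about the arrow inverse: if $g(t)\le h(t)$ for all large $t$ and both go to $\infty$, then $g^{\leftarrow}(y)\ge h^{\leftarrow}(y)$ for all large $y$, directly from the definition \eqref{eq:arrow}. This ``order-reversing'' property is what converts a statement about one function dominating another into a statement about the reverse domination of the inverses, and it is the reason we expect $f_2^{\leftarrow}=o(f_1^{\leftarrow})$.

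The main idea is to exploit the hypothesis $f_1\in\RV_{\rho}$ to compare $f_2$ against a whole family of ``slightly inflated'' copies of $f_1$. Concretely, fix an arbitrary $\lambda>1$. Since $f_1(t)=o(f_2(t))$, for all sufficiently large $t$ we have $f_1(t)\le f_2(t)$, hence $f_2^{\leftarrow}(y)\le f_1^{\leftarrow}(y)$ for large $y$; but this only gives ``$O$'', not ``$o$''. To upgrade to little-$o$, I would instead compare $f_2$ with $f_1$ \emph{rescaled in the argument}. The key observation is that for $\rho>0$, $f_1\in\RV_\rho$ means $f_1(\lambda t)/f_1(t)\to\lambda^\rho>1$, so $f_1(\lambda t)$ eventually dominates any fixed constant multiple of $f_1(t)$; combined with $f_1(t)=o(f_2(t))$ this does not immediately help, so the cleaner route is the symmetric one: because $f_1(t)=o(f_2(t))$, in particular $f_1(\lambda t)\le f_2(\lambda t)$, but more usefully $f_1$ itself can be compared to a \emph{smaller} rescaling of $f_2$. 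Let me set this up through the inverses directly: define $r(y)\coloneqq f_2^{\leftarrow}(y)/f_1^{\leftarrow}(y)$. By the first-half argument $r(y)\le 1$ eventually, so $\{r(y)\}$ is bounded; it remains to show $\limsup_{y\to\infty} r(y)=0$. Suppose not, so $r(y_n)\ge c>0$ along some $y_n\to\infty$. Writing $x_n\coloneqq f_1^{\leftarrow}(y_n)\to\infty$ (which tends to infinity by \eqref{eq:rv_inf} or \eqref{eq:rav_inv} since $f_1$ is locally bounded and $\rho\ge 0$), the definition of the arrow inverse gives $f_1(x_n)\approx y_n$ and $f_2(cx_n)\le f_2(f_2^{\leftarrow}(y_n))\lesssim y_n$ (up to the usual one-sided inequalities inherent to generalized inverses, which I would handle by the standard sandwich $f(f^{\leftarrow}(y)-)\le y\le f(f^{\leftarrow}(y)+)$ argument, cf.\ \cite[Proposition~1.5.12]{BGT87}). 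Then $f_2(cx_n)\lesssim y_n\approx f_1(x_n)$, i.e.\ along $x_n$ the ratio $f_2(cx_n)/f_1(x_n)$ stays bounded; but $f_1(x_n)/f_1(cx_n)\to c^{-\rho}$ when $\rho>0$ (a constant), so $f_2(cx_n)/f_1(cx_n)$ stays bounded, contradicting $f_1=o(f_2)$. For the borderline case $\rho=0$ this scaling argument degenerates, and I would instead invoke the representation-theory machinery already cited: for $\rho=0$ with $f_1$ locally bounded and $f_1(t)\to\infty$, \eqref{eq:rav_inv}-type behavior makes $f_1^{\leftarrow}$ rapidly varying, and one argues via Potter-type bounds \eqref{eq:potter_rv}/\eqref{eq:potter_rapid} that any function eventually dominating $f_1$ has a generalized inverse that is $o(f_1^{\leftarrow})$; alternatively, one can reduce $\rho=0$ to the $\rho>0$ case by composing with a power, since $(f_1)^{1/\epsilon}\in\RV_{0}$ still but this does not gain anything — so here I would follow \cite[Theorem~1]{dt07} essentially verbatim, merely noting that the monotonicity of $f_1$ or $f_2$ assumed there is only used to make $f^{\leftarrow}$ coincide with a genuine inverse, and our Proposition~\ref{lb_proposition} supplies exactly this on a terminal interval without assuming global monotonicity.

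The step I expect to be the main obstacle is the careful handling of the generalized (arrow) inverse's ``slack'': $f^{\leftarrow}$ is not a two-sided inverse, so identities like $f(f^{\leftarrow}(y))=y$ hold only up to inequalities and only at continuity points, and one must be disciplined about which direction of inequality is available. The standard fix is the pair of bounds $f(f^{\leftarrow}(y)-\epsilon)\le y$ and $f(f^{\leftarrow}(y))\ge y$ (valid because $f^{\leftarrow}$ is a left-continuous ``upper'' inverse), together with local boundedness of $f_1,f_2$ so that no pathology occurs on bounded intervals — this is precisely why the hypothesis ``locally bounded'' is imposed in the lemma. Once that bookkeeping is pinned down, the regular-variation scaling $f_1(\lambda t)/f_1(t)\to\lambda^\rho$ does all the real work, and the $o$-relation for the inverses falls out by the contradiction argument sketched above. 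I would close by remarking that the relaxation relative to \cite[Theorem~1]{dt07} is exactly that we never needed $f_1$ or $f_2$ to be monotone on their whole domain, only locally bounded, with Proposition~\ref{lb_proposition} bridging the gap.
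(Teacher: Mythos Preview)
Your contradiction route is different from the paper's direct argument, and while it can be made to work, two points deserve attention. First, the step ``$f_1(x_n)\approx y_n$'' is not available as stated: without monotonicity you only know that for every $\epsilon>0$ there is some $t_n\in[x_n,x_n+\epsilon)$ with $f_1(t_n)>y_n$, so you must work with such a $t_n$ and then use the \emph{uniform} convergence theorem for regularly varying functions to control $f_1(t_n)/f_1(c'x_n)$ along the drifting ratio $t_n/(c'x_n)\to 1/c'$. Proposition~\ref{lb_proposition} does not help here since neither $f_1$ nor $f_2$ is assumed eventually monotone. Second, your claim that ``for $\rho=0$ the scaling argument degenerates'' is mistaken: the limit $f_1(x_n)/f_1(cx_n)\to c^{-\rho}$ is still a finite positive constant (namely $1$) when $\rho=0$, so your own contradiction argument goes through verbatim in that case --- there is no need for a separate treatment via rapid variation or \cite{dt07}.

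The paper bypasses all the inverse-slack bookkeeping by never evaluating $f_i$ at $f_i^{\leftarrow}(y)$. Instead it argues directly from the definition of $f^{\leftarrow}$ as an infimum of a set: from $f_1(t)\le\alpha f_2(t)$ for $t\ge t_\alpha$ (valid for every $\alpha\in(0,1)$ since $f_1=o(f_2)$), local boundedness of $f_1$ on $[a,t_\alpha]$ gives a threshold $y_\alpha$ above which $f_1(t)>y$ forces $t>t_\alpha$ and hence $f_2(t)>y/\alpha$; this is a set inclusion yielding $f_1^{\leftarrow}(y)\ge f_2^{\leftarrow}(y/\alpha)$, i.e.\ $f_2^{\leftarrow}(u)\le f_1^{\leftarrow}(\alpha u)$ for large $u$. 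Dividing by $f_1^{\leftarrow}(u)$ and using only the regular-variation of $f_1^{\leftarrow}$ (which is in $\RVp{1/\rho}$ for $\rho>0$ and in $\RVp{\infty}$ for $\rho=0$, by \eqref{eq:rv_inv} and \eqref{eq:rav_inv}) gives $\limsup f_2^{\leftarrow}(u)/f_1^{\leftarrow}(u)\le \alpha^{1/\rho}$ in the first case and $0$ in the second; letting $\alpha\downarrow 0$ finishes. This is both shorter and avoids the delicate sandwich entirely.
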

\begin{proof}
Because 	$f_1(t) = o(f_2(t))$ holds, for every $\alpha \in (0,\,1)$, there exists $t_\alpha\geq \max\{a,\,b\}$ such that
\[
f_1(t) \leq \alpha f_2(t) ,\qquad \forall t \geq t_{\alpha}.
\]
Now, because $f_1$ is locally bounded,  $y_{\alpha} \coloneqq 1+ \sup \{ f_1(t) \mid a \leq t \leq t_{\alpha} \}$ is finite. In particular, if $f_1(t) > y$ holds with $y \geq y_\alpha$, we must have  $t > t_{\alpha}$, which further leads to $\alpha f_2(t) \geq f_1(t) > {y}$.
Consequently, for $y \geq y_\alpha$, the inequality $f_1(t) > y$ implies  $f_2(t) > \frac{y}{\alpha}$.
In view of the definition of the arrow inverse \eqref{eq:arrow}, we have that if $y \geq y_{\alpha}$,
then 
\[
\inf \left\{t \in [a,\,\infty) \mid f_1(t) > y \right\}= f_1^{\leftarrow}(y) \geq f_2^{\leftarrow}(y/\alpha) = \inf \left\{t \in [b,\,\infty) \mid f_2(t) > y/\alpha \right\}. 
\]
Put otherwise, $f_1^{\leftarrow}(\alpha u) \geq f_2^{\leftarrow}(u)$ holds for sufficiently large $u$ which leads to
\[
\limsup _{u \to \infty} \frac{f_2^{\leftarrow}(u)}{f_1^{\leftarrow}(u)} \leq \limsup _{u \to \infty} \frac{f_1^{\leftarrow}(\alpha u)}{f_1^{\leftarrow}(u)}.
\]
Next, we divide in two cases. 

\fbox{$\rho = 0$} Since $f_1 \in \RV_{0}$, we have $f_1 ^{\leftarrow}\in \RV_{\infty}$ (see \eqref{eq:rav_inv} and \eqref{eq:krv}) which implies that the $\limsup$ on the right-hand-side is zero for $\alpha \in (0,1)$ (see Definition~\ref{def:rv}).
That is, $f_2^{\leftarrow}(t) = o(f_1^{\leftarrow}(t))$ holds.

\fbox{$\rho > 0$} We have $f_1 ^{\leftarrow}\in \RV_{1/\rho}$ (see \eqref{eq:rv_inv}). By the definition of 
regular variation, we have
\[
\limsup _{u \to \infty} \frac{f_2^{\leftarrow}(u)}{f_1^{\leftarrow}(u)} \leq \limsup _{u \to \infty} \frac{f_1^{\leftarrow}(\alpha u)}{f_1^{\leftarrow}(u)} = \alpha^{1/\rho}
\]
and since $\alpha \in (0,\,1)$ is arbitrary, we conclude that the $\lim \sup$ on the left-hand-side is zero and $f_2^{\leftarrow}(t) = o(f_1^{\leftarrow}(t))$ holds.
\end{proof}

We are now positioned to state the main result of this subsection. In what follows, we use the notation 
$f(s) = 1/o(g(s))$ as $s \to \infty$ to indicate that $1/(f(s)g(s)) \to 0$ as $s \to \infty$.
 \begin{theorem}\label{theorem_int}
Suppose that $f:(0,\,a] \to (0,\,\infty) \in \RVzp{\rho}$ with $\rho\in[0,\,1]$  is  nondecreasing  and  $\lim _{x\to 0_+}f(x) = 0$.  Let $g(x) := \frac{1}{xf^{-}(1/x)}$ for $x\in[1/\delta,\,\infty)$ and $\Phi_f$ be defined as in \eqref{pf_f}.
Then $g$ is locally bounded. Moreover, the following statements hold.
\begin{enumerate}
    \item [{\rm (i)}] If $\rho = 0$ and $\ln(g)\in\RV_{q}$ with $q > 0$  then letting $\alpha \in \R$ and $\widehat{g}(x) \coloneqq x^{\alpha} g(x)$, we have $\Phi_f^{-1}(s)\sim \frac{1}{\widehat{g}^{\leftarrow}(s)}$ as $s\to\infty$. 
    
     \item [{\rm (ii)}] If $\rho\in(0,\,1)$, then $\Phi_f^{-1}(s)\sim \frac{1}{g^{\leftarrow}\big((1/\rho - 1)s\big)}$ as $s\to\infty$.
    
    \item [{\rm (iii)}] In case of  $\rho = 1$: if $f(t)\overset{c}{\sim} t$  as $t\to0_+$, then there exist $\tau_1,\,\tau_2 > 0$ and $0 < c_1 < c_2 < 1$ such that $\tau_1 c_1^s \le \Phi_f^{-1}(s) \le \tau_2 c_2^s$ whenever $s$ is large enough;  if  $t = o(f(t))$ as $t\to0_+$ then $\Phi_f^{-1}(s) = \frac{1}{o\left(g^{\leftarrow}(s)\right)}$ as $s\to\infty$.
          
\end{enumerate}
\end{theorem}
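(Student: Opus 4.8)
The plan is to first rewrite $\Phi_f$ as a Karamata-type integral and then analyse it case by case. With $g(x)=1/(xf^{-}(1/x))$, the change of variables $t\mapsto 1/t$ in $\Phi_f(1/y)$ gives
\[
F(y):=\Phi_f(1/y)=\int_{1/\delta}^{y}\frac{g(t)}{t}\,dt,\qquad y\ge 1/\delta .
\]
Local boundedness of $g$ is immediate: $f^{-}$ is nondecreasing and strictly positive (Lemma~\ref{inv_lemma} and the proof of Lemma~\ref{pf_inv}), so on any compact $[1/\delta,K]$ the quantity $xf^{-}(1/x)$ is bounded and bounded away from $0$. By Lemma~\ref{pf_inv}, $F$ is continuous, increasing and tends to $\infty$, hence $\Phi_f^{-1}(s)=1/F^{-1}(s)=1/F^{\leftarrow}(s)$ for all large $s$ (Proposition~\ref{lb_proposition}), so everything reduces to the asymptotics of $F$ and $F^{\leftarrow}$. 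In each case I first read off the regular-variation class of $g$ from that of $f$ via \eqref{eq:rvz_inv}, \eqref{eq:ravz_inv} and \eqref{eq:rv_rvz}.

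For item~(ii), $g\in\RVp{1/\rho-1}$ with positive index, so Karamata's theorem \eqref{eq:karamata} with $\sigma=-1$ gives $F(y)\sim\frac{\rho}{1-\rho}\,g(y)$; since $F\in\RVp{1/\rho-1}$ is locally bounded with positive index, Lemma~\ref{lemma_inv_eq} (tracking the multiplicative constant as in its proof) yields $F^{\leftarrow}(s)\sim g^{\leftarrow}((1/\rho-1)s)$, and inverting gives the claim. For item~(iii) with $f(t)\overset{c}{\sim}t$: then $f\in\RVz_1$, and \eqref{eq:inv_equiv_minus} applied with the identity map (which is its own minus inverse) gives $f^{-}(t)\overset{c}{\sim}t$, so $1/f^{-}(t)=\tfrac{\mu}{t}(1+o(1))$ near $0$ for some $\mu>0$; integrating against constants $a<b$ close to $\mu$ bounds $\Phi_f(u)$ between $a\ln(\delta'/u)+C$ and $b\ln(\delta'/u)+C$ for small $u$, and inverting the decreasing $\Phi_f$ sandwiches $\Phi_f^{-1}(s)$ between $\tau_1 c_1^{s}$ and $\tau_2 c_2^{s}$ with $0<c_1<c_2<1$. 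For item~(iii) with $t=o(f(t))$: then $f^{-}(s)=o(s)$, so $g\to\infty$ and $g\in\RVp{0}$ by \eqref{eq:rvz_inv}; Karamata's theorem \eqref{eq:karamata} at the endpoint $\sigma=-1$ gives $g(y)/F(y)\to0$, while $F\in\RVp{0}$ by \cite[Proposition~1.5.9a]{BGT87}, so Lemma~\ref{lem:inv_o} (applicable since $g$ has index $0\ge0$) gives $F^{\leftarrow}(s)=o(g^{\leftarrow}(s))$, which unwinds to $\Phi_f^{-1}(s)=1/o(g^{\leftarrow}(s))$.

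Item~(i) is the substantial one. Here $g\in\RVp{\infty}$, and $\widehat{g}_1(x):=xg(x)=1/f^{-}(1/x)$ is nondecreasing; for every $\alpha\in\R$ one has $\ln\widehat{g}=\alpha\ln x+\ln g\sim\ln g\in\RVp{q}$ because $q>0$. The heart of the matter is the estimate $\ln F\sim\ln g$. For its upper half, Potter's bounds \eqref{eq:potter_rv} for $\ln g\in\RVp{q}$ give $g(t)\le\exp\{A\ln g(y)\}$ for $1/\delta\le M\le t\le y$, so $F(y)\le C+\ln(\delta y)\exp\{A\ln g(y)\}$ and $\ln F(y)\le A\ln g(y)(1+o(1))$ for every $A>1$, hence $\limsup\ln F(y)/\ln g(y)\le1$. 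For its lower half, monotonicity of $\widehat{g}_1$ gives $F(y)\ge\int_{y/\lambda}^{y}\widehat{g}_1(t)\,t^{-2}\,dt\ge\widehat{g}_1(y/\lambda)\,y^{-1}(1-1/\lambda)$; taking logarithms, using $\ln\widehat{g}_1\in\RVp{q}$ (so $\ln\widehat{g}_1(y/\lambda)\sim\lambda^{-q}\ln g(y)$) and $\ln y=o(\ln g(y))$, and letting $\lambda\downarrow1$, gives $\liminf\ln F(y)/\ln g(y)\ge1$. Thus $\ln F\sim\ln g\sim\ln\widehat{g}$. I then use the exact identity $G^{\leftarrow}(s)=(\ln G)^{\leftarrow}(\ln s)$ (valid because $\{G>s\}=\{\ln G>\ln s\}$): Lemma~\ref{lemma_inv_eq} applied to $\ln F\sim\ln\widehat{g}$ (with $\ln\widehat{g}\in\RVp{q}$, $q>0$, locally bounded) gives $(\ln F)^{\leftarrow}\sim(\ln\widehat{g})^{\leftarrow}\in\RVp{1/q}$, and evaluating at $\ln s\to\infty$ yields $F^{\leftarrow}(s)\sim\widehat{g}^{\leftarrow}(s)$, whence $\Phi_f^{-1}(s)\sim1/\widehat{g}^{\leftarrow}(s)$; nothing depends on the choice of $\alpha$.

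I expect the estimate $\ln F\sim\ln g$ in item~(i) to be the main obstacle, since $g$ need not be monotone or smooth and a direct Laplace-type treatment of $\int g(t)/t\,dt$ would be delicate. The plan sidesteps this by splitting the burden between Potter's bounds (upper inequality) and a crude monotone lower bound through $\widehat{g}_1=1/f^{-}(1/\cdot)$, which is nondecreasing even when $g$ is not, and by pushing every inverse computation through the logarithm via $G^{\leftarrow}(s)=(\ln G)^{\leftarrow}(\ln s)$, which brings the inverse estimates within the scope of Lemma~\ref{lemma_inv_eq}.
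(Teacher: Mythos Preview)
Your proposal is correct. For items~(ii) and~(iii) you follow essentially the same route as the paper: Karamata's theorem with $\sigma=-1$ in item~(ii), the sandwich argument via $f^{-}(t)\overset{c}{\sim}t$ in the first half of item~(iii), and the combination of \cite[Proposition~1.5.9a]{BGT87} with Lemma~\ref{lem:inv_o} in the second half (your direct passage from $t=o(f(t))$ to $f^{-}(s)=o(s)$ is a slight shortcut over the paper, which instead routes through Lemma~\ref{lem:inv_o} applied to $1/f(1/\cdot)$ and the identity, but both are valid).

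Item~(i) is where you genuinely diverge. The paper writes $F(x)=\int_{1/\delta}^{x}e^{h(s)}\,ds$ with $h(s)=\ln g(s)-\ln s$ and invokes \cite[Lemma~5.10]{LL20} to obtain $\ln(F(x)-b)\sim h(x)$, from which $\ln F\sim\ln\widehat{g}$ follows. You instead establish $\ln F\sim\ln g$ directly: an upper bound via Potter's bounds on $\ln g\in\RV_q$ (so $g(t)\le g(y)^A$ for $M\le t\le y$, giving $\ln F(y)\le A\ln g(y)(1+o(1))$), and a lower bound via the monotonicity of $\widehat{g}_1(x)=1/f^{-}(1/x)$ on the window $[y/\lambda,y]$, followed by $\lambda\downarrow 1$. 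This is more elementary and fully self-contained within the paper's preliminaries, at the cost of a two-sided $\limsup/\liminf$ argument; the paper's approach is shorter once the external lemma is granted but imports a Laplace-type estimate from \cite{LL20}. After $\ln F\sim\ln\widehat{g}$, both proofs finish identically: apply Lemma~\ref{lemma_inv_eq} on the log scale and use $G^{\leftarrow}(s)=(\ln G)^{\leftarrow}(\ln s)$ to descend.
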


\begin{proof}
Initially, we need some bookkeeping to verify that the several functions involved in this theorem satisfy certain desirable properties.

First, we see from \eqref{eq:rv_rvz}, \eqref{eq:rv_calc}, \eqref{eq:rvz_inv} and \eqref{eq:ravz_inv} that $g\in\RV_{1/\rho - 1}$ when $\rho\in(0,\,1]$ and $g\in\RV_{\infty}$ when $\rho = 0$. 
Also, since local boundedness is preserved by taking products and  $1/x$ and $1/f^{-}(1/x)$ are locally bounded (both by monotonicity) over $[1/\delta,\, \infty)$, $g$ is also locally bounded. Over $[1/\delta,\,\infty)$ the functions $1/x$ and $1/f^{-}(1/x)$ are positive and monotone, so they are also locally bounded away from zero. So, similarly, $g$ is locally bounded away from zero over $[1/\delta,\, \infty)$.

We let $F(x) \coloneqq \Phi_f(1/x)$ and then have
\begin{equation}\label{int_form}
F(x) =\Phi_f(1/x) = \int_{1/x}^{\delta}\frac{1}{f^{-}(t)}dt = \int_{1/\delta}^x\frac{1}{s^2f^{-}(1/s)}ds = \int_{1/\delta}^{x}\frac{g(s)}{s}ds.
\end{equation}
Since $\lim_{x\to 0_+}f(x) = 0$,  we invoke Lemma~\ref{pf_inv} to conclude that  $\Phi_f$ is continuous and  decreasing. This together with  $F(\cdot) = \Phi_f(1/\cdot)$ implies that $F$ is continuous and  increasing.

We now consider \eqref{int_form} in three cases as follows.

\fbox{$\rho = 0$ and $\ln(g)\in\RV_{q}$ with $q > 0$.}  
In this case, first we observe that the same argument that showed that 
$g$ is locally bounded and locally bounded away from zero also applies to $\widehat g$.
This implies that $\ln(\widehat g)$ is locally bounded\footnote{Direct from definition. Over any compact set $C$, there are positive constants $M_1,M_2$ such that $M_1 \leq g(t) \leq M_2$ for $t \in C$. Therefore, $\ln(M_1) \leq \ln(g(t)) \leq \ln(M_2)$, so that $\abs{\ln(g(t))} \leq \max\{\abs{\ln(M_1)},\, \abs{\ln(M_2)} \}.$}.

Next, let $h(x) \coloneqq \ln(g(x)) - \ln(x)$.
Since $\ln(g) \in \RV_{q}$ with $q > 0$ holds, we have $ \ln(x)/\ln(g(x)) \to 0$ as
$x \to \infty$, which follows from \eqref{eq:rv_calc} and \eqref{eq:rv_inf2}.
Similarly, $\alpha \ln(x)/\ln(g(x)) \to 0$ as $x \to \infty$.
This implies that
\begin{equation}\label{eq:hlng}
h(x) \sim  \ln(g(x)) \sim \ln(\widehat g(x)) \ \ {\rm as} \ \ x\to\infty.
\end{equation}
 Since $\ln(g(x)) \to \infty $ as 
$x \to \infty$ (see \eqref{eq:rv_inf}), \eqref{eq:hlng} implies that  there exists 
$\widehat a > 1/\delta$ such that $h(x)$ is  positive over $[\widehat a, \, \infty)$
and the restriction of $h$  to $[\widehat a, \, \infty)$ belongs 
to $\RV_{q}$. 

Letting $b \coloneqq \int_{1/\delta}^{\widehat{a}}e^{h(s)}ds$ and recalling \eqref{int_form}, we apply \cite[Lemma~5.10]{LL20} to conclude that
\begin{align}\label{eq:fxb}
\ln(F(x)-b) = \ln\left(\int_{{1/\delta}}^{x}e^{h(s)}ds -b\right)=
\ln\left(\int_{\widehat{a}}^{x}e^{h(s)}ds \right)\sim h(x) \ \ {\rm as} \ \ x\to\infty.
\end{align}
This implies that $F(x)\to\infty$ as $x\to\infty$. Consequently, 
\begin{equation*}
\lim_{x\to\infty}\frac{\ln\left(F(x) - b\right)}{\ln(F(x))} = \lim_{x\to\infty}\frac{\ln(F(x)) + \ln\left(1 - b/F(x)\right)}{\ln(F(x))} = 1 + \lim_{x\to\infty}\frac{ \ln\left(1 - b/F(x)\right)}{\ln(F(x))} = 1
\end{equation*}
holds, namely, $\ln(F(x)-b) \sim \ln(F(x))$ holds.
From \eqref{eq:hlng} and \eqref{eq:fxb}  we obtain 
\begin{equation}\label{ln_prim}
\ln\left(F(x)\right) \sim \ln(F(x)-b) \sim h(x) \sim \ln(\widehat{g}(x))\ \ {\rm as} \ \ x\to\infty.
\end{equation}
Since $\ln(\widehat{g}(x))\to\infty$ as $x\to\infty$, the same is true of  $\ln(F(x))$ and there exists $\tilde{a} > 1/\delta$ such that both functions are positive on $[\tilde{a},\,\infty)$. 
Let 
\begin{equation}\label{restr_fun}
F_r(\cdot):= \ln(F(\cdot))|_{[\tilde{a},\,\infty)}, \ \ \ \ \widehat{g}_r(\cdot):= \ln(\widehat{g}(\cdot))|_{[\tilde{a},\,\infty)}.
\end{equation}
We then have from \eqref{ln_prim} and $\ln(\widehat{g})\in\RV_{q}$  that $F_r(x)\sim\widehat{g}_r(x)$ as $x\to\infty$ and $\widehat{g}_r\in\RV_{q}$ with $q > 0$. In addition, $F_r$ is locally bounded (due to the continuity of $F$), and $\widehat{g}_r$ is locally bounded, thanks to the local boundedness of $\ln(\widehat{g})$.  We apply Lemma~\ref{lemma_inv_eq} to the two functions in \eqref{restr_fun} and obtain
\begin{equation}\label{eq:fr}
 F_r^{\leftarrow}(s) \sim \widehat{g}_r^{\leftarrow}(s) \ \ {\rm as} \ \ x\to\infty.
\end{equation}
We recall that both $\ln(F(x))$ and $\ln(\widehat{g}(x))$ go to $\infty$ as $x\to\infty$ and $F_r$ is continuous and increasing. We apply Proposition~\ref{lb_proposition} to them and see from \eqref{restr_fun} and \eqref{eq:fr} that
\begin{equation*}
F^{-1}(e^s) = (\ln(F))^{-1}(s) = (\ln(F))^{\leftarrow}(s) = F_r^{\leftarrow}(s) \sim \widehat{g}_r^{\leftarrow}(s) = (\ln(\widehat{g}))^{\leftarrow}(s) = \widehat{g}^{\leftarrow}(e^s) \ \ {\rm as} \ \ s\to\infty,
\end{equation*}
where the first equation follows from the fact that $F$ is invertible and the last equation follows from the definition of arrow generalized inverse.
This further gives
\begin{equation*}
\Phi_f^{-1}(s) = 1/F^{-1}(s) \sim 1/\widehat{g}^{\leftarrow}(s) \ \ {\rm as} \ \  s\to\infty,
\end{equation*}
which proves item (i).

\fbox{$\rho\in(0,\,1)$}  In this case, we have $g\in\RV_{1/\rho - 1}$ with $1/\rho - 1 > 0$. 
Then, invoking \eqref{eq:karamata} with $g$ and $\sigma = -1$, we have from  \eqref{int_form} that
\begin{equation}\label{prim_eqv}
F(x) \sim \frac{\rho}{1 - \rho}g(x)  \ \ \ {\rm as}\ \ x\to\infty.
\end{equation}
Recalling that $g$ and $F$ are locally bounded,
applying Lemma~\ref{lemma_inv_eq} to \eqref{prim_eqv} and Proposition~\ref{lb_proposition}, we obtain
that 
\begin{equation*}
\Phi_f^{-1}(s) = 1/F^{-1}(s) \sim 1/\Big(\frac{\rho}{1 - \rho}g\Big)^{\leftarrow}(s) = 1/g^{\leftarrow}\big((1/\rho - 1)s\big)  \ \ {\rm as}\ \  s\to\infty,
\end{equation*}
which proves statement (ii).

\fbox{$\rho = 1$}   We first prove the first half of the statement. Since $f$ is nondecreasing, 
it is locally bounded away from zero, so $f(t) \overset{c}{\sim} t$ implies 
$f^{-}(t) \overset{c}{\sim} t$ as $t \to 0_+$.
In particular, 
there exists $\mu > 0$ such that $1/f^{-}(1/s) \sim s/\mu$ as $s\to\infty$.  This further implies
\begin{equation*}
\frac{g(s)}{s}  = \frac{1/f^{-}(1/s)}{s^2} \sim \frac{1}{\mu s}\ \ {\rm as}\ \  s\to\infty.
\end{equation*}
Therefore, fix any $\epsilon\in(0,\,1)$, there exists $M > 0$ such that for all $s \ge M$ we have
\begin{equation*}
\frac{1  - \epsilon}{\mu s} \le \frac{g(s)}{s} \le \frac{ 1 + \epsilon}{\mu s}.
\end{equation*}
This together with \eqref{int_form} implies that when $x \ge M$,
\begin{equation*}
F(x) = \int_{1/\delta}^{x}\frac{g(s)}{s}ds = \int_{1/\delta}^{M}\frac{g(s)}{s}ds + \int_{M}^{x}\frac{g(s)}{s}ds
\end{equation*}
Let $\kappa_1:= \int_{1/\delta}^{M}\frac{g(s)}{s}ds - (1 - \epsilon)\ln(M)/\mu$ and $\kappa_2= \int_{1/\delta}^{M}\frac{g(s)}{s}ds - (1 + \epsilon)\ln(M)/\mu$. Then for $x\ge M$,
\begin{equation}\label{bd_F}
(1 - \epsilon)\ln(x)/\mu + \kappa_1 \le F(x) \le (1 + \epsilon)\ln(x)/\mu + \kappa_2.
\end{equation}
Due to Lemma~\ref{pf_inv}, we see that  $\Phi_f$ and $F$ are continuous, and $F$ is  increasing.
We recall that if $f_1, f_2$ are increasing functions satisfying $f_1 \leq f_2$ we have $f_2^{-1} \leq f_1^{-1}$.
Applying this principle to \eqref{bd_F}, we conclude that whenever $s$ is large enough, $F^{-1}$ is sandwiched between the inverses of the functions appearing on the right-hand-side and the left-hand-side of \eqref{bd_F}, which leads to
\begin{equation*}
\Phi_f^{-1}(s) = 1/F^{-1}(s) \in\left[\tau_1 c_1^s,\, \tau_2  c_2^s\right],
\end{equation*}
where $\tau_1 := e^{\mu\kappa_1/(1 - \epsilon)}$, $\tau_2 :=  e^{\mu\kappa_2/(1 + \epsilon)}$, $c_1:= e^{-\mu/(1 - \epsilon)}$ and $c_2:= e^{-\mu/(1 + \epsilon)}$. This proves the first half of statement (iii).

Now, it remains the case of $t = o(f(t))$ as $t\to0_+$.  In this case, we have $g\in\RV_0$.  Using this, the local boundedness of $g$ and the definition of $F$, we invoke \cite[Proposition~1.5.9a]{BGT87} which tells us that 
\begin{equation}\label{prim_o}
F \in\RV_0  \ \ \ {\rm and}  \ \ \   g(x) = o(F(x)) \ \ {\rm as}\ \ x\to\infty.
\end{equation}
Due to $t = o(f(t))$ as $t\to0_+$, it holds that $\frac{1/f(1/x)}{x} = \frac{1/x}{f(1/x)}\to 0$ as $x\to\infty$. Namely, we have $1/f(1/x) = o(x)$.
We note from the monotonicity of $f$ and $\lim _{x\to 0_+}f(x) = 0$ that $1/f(1/x)$ is locally bounded and goes to $\infty$ as $x\to\infty$. Therefore, applying Lemma~\ref{lem:inv_o} to $1/f(1/x)$, $x$ and recalling \eqref{eq:rvz_inv_def},  we obtain
\begin{equation*}
\lim_{x\to\infty}g(x) = \lim_{x\to\infty}\frac{1}{xf^{-}(1/x)} = \lim_{x\to\infty}\frac{1/f^{-}(1/x)}{x} = \lim_{x\to\infty}\frac{(1/f(1/\cdot))^{\leftarrow}(x)}{x}   = \infty.
\end{equation*}
This together with \eqref{prim_o} implies that $F(x) \to \infty$ as $x \to \infty$.
Note that $F$ is locally bounded (due to its continuity) and $g$ is locally bounded. Applying Lemma~\ref{lem:inv_o} to $g$ and $F$ by recalling \eqref{prim_o}, we  have 
\begin{equation*}
\Phi_f^{-1}(s) = 1/F^{-1}(s) = 1/o(g^{\leftarrow}(s))  \ \ {\rm as}\ \  s\to\infty,
\end{equation*}
which proves the latter half of statement (iii). This completes the proof.
\end{proof}

Typically Theorem~\ref{theorem_int} would be invoked in the context of Theorem~\ref{theo:rate} with $f = \phi$. However, $\phi$ may have terms of different orders so it will be helpful to verify that we can focus on the important terms only, especially in item~(ii). 

\begin{proposition}\label{prop:asympt_equiv}
Suppose that $f$ and $\hat f$ satisfy the assumptions in Theorem~\ref{theo:rv} and $f \overset{c}{\sim} \hat f \in \RVz_{\rho}$ with $\rho \in (0,\,1)$. 
Then we have
$\Phi_f \overset{c}{\sim} \Phi_{\hat f}$ and $\Phi_f^{-1} \overset{c}{\sim} \Phi_{\hat f}^{-1}$.
\end{proposition}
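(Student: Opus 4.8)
The plan is to transport the relation $\overset{c}{\sim}$ along the chain of constructions that builds $\Phi_f$ and $\Phi_f^{-1}$ from $f$: first $f\rightsquigarrow f^-$, then $f^-\rightsquigarrow\Phi_f$ via Karamata's theorem, then $\Phi_f\rightsquigarrow\Phi_f^{-1}$ via the arrow generalized inverse. Throughout one uses that, for positive functions, $u\overset{c}{\sim}v$ is the same as $\lim u/v\in(0,\infty)$, that this relation is transitive (if $u/v\to\mu_1$ and $v/w\to\mu_2$ then $u/w\to\mu_1\mu_2$), and that it is preserved under taking reciprocals and under multiplying both sides by the same function.

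\emph{Step 1: from $f$ to $f^-$.} Since $f$ and $\hat f$ are nondecreasing they are locally bounded away from zero, and by hypothesis $f\overset{c}{\sim}\hat f$ with $f,\hat f\in\RVz_\rho$, $\rho\in(0,1)$. Hence \eqref{eq:inv_equiv_minus} applies and yields $f^-\overset{c}{\sim}\hat f^-$; by \eqref{eq:rvz_inv} both $f^-$ and $\hat f^-$ belong to $\RVz_{1/\rho}$. Passing to reciprocals, $1/f^-$ and $1/\hat f^-$ belong to $\RVz_{-1/\rho}$ with $-1/\rho<-1$, they are monotone (Lemma~\ref{inv_lemma}) hence locally bounded away from zero, and $1/f^-\overset{c}{\sim}1/\hat f^-$ as $t\to0_+$.

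\emph{Step 2: from $f^-$ to $\Phi_f$.} Applying \eqref{eq:karamata0_3} with the role of ``$f$'' played by $1/f^-\in\RVz_{-1/\rho}$ (legitimate since $-1/\rho<-1$ and $1/f^-$ is locally bounded away from zero), we get $\Phi_f(x)=\int_x^\delta \tfrac{1}{f^-(t)}\,dt\overset{c}{\sim}\tfrac{x}{f^-(x)}$ as $x\to0_+$, and likewise $\Phi_{\hat f}(x)\overset{c}{\sim}\tfrac{x}{\hat f^-(x)}$. Since $1/f^-\overset{c}{\sim}1/\hat f^-$, multiplying by the common factor $x$ gives $\tfrac{x}{f^-(x)}\overset{c}{\sim}\tfrac{x}{\hat f^-(x)}$, and transitivity of $\overset{c}{\sim}$ then yields $\Phi_f\overset{c}{\sim}\Phi_{\hat f}$. (Alternatively, one can avoid \eqref{eq:karamata0_3} here by a direct squeeze: from $1/f^-\overset{c}{\sim}1/\hat f^-$ bound the ratio of integrands on $(0,a']$ between $\mu\pm\epsilon$, and use that $\Phi_f(x),\Phi_{\hat f}(x)\to\infty$ as $x\to0_+$ by Lemma~\ref{pf_inv}~(ii)(a), applicable since $\rho\in[0,1)$, so the bounded ``head'' $\int_{a'}^\delta$ is negligible against the divergent integral.)

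\emph{Step 3: from $\Phi_f$ to $\Phi_f^{-1}$.} By Lemma~\ref{pf_inv}, $\Phi_f,\Phi_{\hat f}$ are continuous, decreasing and tend to $+\infty$ at $0_+$, so $\Phi_f^{-1},\Phi_{\hat f}^{-1}$ are well-defined on a half-line. Put $F(x):=\Phi_f(1/x)$, $\hat F(x):=\Phi_{\hat f}(1/x)$ on $[1/\delta,\infty)$; these are continuous, increasing, locally bounded, tend to $\infty$, and by Theorem~\ref{theo:rv}~(ii) together with \eqref{eq:rv_rvz} they lie in $\RV_{1/\rho-1}$ with $1/\rho-1>0$. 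Substituting $x\mapsto1/x$ in the ratio limit from Step~2 gives $F\overset{c}{\sim}\hat F$ at $\infty$. Now Lemma~\ref{lemma_inv_eq} (whose hypotheses hold: $F$ locally bounded, $F\in\RV_{1/\rho-1}$ with positive index) yields $F^{\leftarrow}\overset{c}{\sim}\hat F^{\leftarrow}$, and by Proposition~\ref{lb_proposition} and the remark following it, $F^{\leftarrow}=F^{-1}$ and $\hat F^{\leftarrow}=\hat F^{-1}$ on a half-line, so $F^{-1}\overset{c}{\sim}\hat F^{-1}$. Finally, $\Phi_f(x)=s\iff x=1/F^{-1}(s)$, hence $\Phi_f^{-1}(s)=1/F^{-1}(s)$ and $\Phi_{\hat f}^{-1}(s)=1/\hat F^{-1}(s)$ for large $s$; taking reciprocals once more gives $\Phi_f^{-1}\overset{c}{\sim}\Phi_{\hat f}^{-1}$.

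\emph{Anticipated difficulty.} There is no deep obstruction; the work is in bookkeeping, and the one place to be careful is Step~2 — verifying that all domain/regularity conditions behind \eqref{eq:inv_equiv_minus}, \eqref{eq:rvz_inv} and \eqref{eq:karamata0_3} are genuinely in force (monotonicity of $f^-,\hat f^-$ and of their reciprocals, via Lemma~\ref{inv_lemma}, supplies the requisite local boundedness / boundedness away from zero) — while exploiting that $\overset{c}{\sim}$ only records the \emph{existence} of some positive limiting constant, so no constant ever needs to be computed.
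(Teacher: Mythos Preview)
Your proof is correct and follows essentially the same route as the paper's: first push $\overset{c}{\sim}$ from $f$ to $f^{-}$ via \eqref{eq:inv_equiv_minus} and \eqref{eq:rvz_inv}, then to $\Phi_f$ via Karamata's theorem \eqref{eq:karamata0_3}, then to $\Phi_f^{-1}$ by passing to $F(x)=\Phi_f(1/x)\in\RV_{1/\rho-1}$ and invoking Lemma~\ref{lemma_inv_eq} together with Proposition~\ref{lb_proposition}. The only cosmetic difference is that you cite Theorem~\ref{theo:rv}(ii) for $F\in\RV_{1/\rho-1}$ whereas the paper reads it directly off the Karamata step, and you add an optional direct squeeze alternative in Step~2; neither changes the argument.
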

\begin{proof}
	By \eqref{eq:rvz_inv} and \eqref{eq:inv_equiv_minus}, we have $f^{-}, \hat f^{-} \in \RVz_{1/\rho}$ and 
	\[
	f^{-} \overset{c}{\sim} \hat f^{-}.
	\]
	Now, $1/f^{-}$ and $1/\hat f^{-}$ both belong to $\RVz_{-1/\rho}$.
	Moreover, thanks to Lemma~\ref{inv_lemma}, $1/f^{-}$ and $1/\hat f^{-}$ are monotone and therefore locally bounded away from zero.  Applying \eqref{eq:karamata0_3}\footnote{We restrict the domain of $1/f^{-}$ and $1/\hat f^{-}$ to $(0,\,\delta]$.} (i.e., Karamata's theorem) to both $\Phi_{f}$ and $\Phi_{\hat f}$ we have
	\[
	\Phi_{f} \overset{c}{\sim} \frac{y}{f^{-}(y)}	\overset{c}{\sim}  \frac{y}{\hat f^{-}(y)} \overset{c}{\sim} \Phi_{\hat f} \in \RVz_{1-1/\rho}.
	\]
	Defining $F$ and $\hat F$ by $F(x)  \coloneqq \Phi_{f}(1/x)$ and $\hat F (x) \coloneqq \Phi_{\hat f}(1/x)$, we have $F, \hat F \in \RV_{1/\rho-1}$ and 
	$1/\rho -1 > 0$ because $\rho \in (0,\,1)$. 
	Then, we have $F \overset{c}{\sim} \hat F$. By Lemma~\ref{pf_inv}, $F$ and $\hat F$ are increasing, continuous and both go to $\infty $ as $x \to \infty$, since $\rho \in (0,1)$.
	By Lemma~\ref{lemma_inv_eq} and Proposition~\ref{lb_proposition}, we conclude
	that $F^{-1} \overset{c}{\sim} \hat F^{-1}$.
	Since $F^{-1} = 1/\Phi_f^{-1}$ and $\hat F^{-1} = 1/\Phi_{\hat f}^{-1}$, we conclude that 
	$\Phi_f^{-1} \overset{c}{\sim} \Phi_{\hat f}^{-1}$.
\end{proof}

\section{Applications and examples}\label{sec:examples}
In this section we take a look at some examples of  non-H\"olderian behavior that is covered by the definition of Karamata regularity. 
In sections~\ref{sec:ap} and \ref{sec:dr} we will focus on new results that are obtainable based on the techniques developed in this paper and we will contrast with the results described in \cite{BLT17} and \cite{LL20}.

\subsection{Exotic error bounds and convergence rate of alternating projections}\label{sec:ap}
First, we will see examples of error bounds among convex sets. 
We will consider two closed convex sets $C_1,\, C_2$ with non-empty intersection and apply Definition~\ref{def_jcrv} to the projection operators $L_1 \coloneqq P_{C_1}$, $L_2\coloneqq P_{C_2}$. 
Then, in each case we examine the corresponding convergence rate of the alternating projections (AP) algorithm for solving the common fixed point problem 
\begin{equation}\label{feas_prob}
{\rm find}\ \ x\in C\coloneqq ( \F\, L_1) \cap (\F\,L_2)  = C_1 \cap C_2.
\end{equation}

As a reminder, AP for \eqref{feas_prob} is obtained from the quasi-cyclic iteration described in \eqref{alg_iter} by letting 
$T_{i} \coloneqq L_i$, $w_i^{k} \coloneqq (k+i \mod 2)$, so 
that $w_1^k$ is $(0,1,0,1,\ldots)$, $w_{2}^k$ is $(1,0,1,0,\ldots)$ and therefore $\nu = 1$.
With that, item~(b) of Theorem~\ref{theo:rate} is satisfied with $s=2$.
Then, the assumption in item~(a) of Theorem~\ref{theo:rate}, i.e.,~joint  Karamata regularity of $P_{C_1}, P_{C_2}$ over a bounded set $B$ containing $\{x^k\}$, corresponds to an error bound condition on $C_1,\,C_2$ over $B$, namely, 
	\begin{equation}\label{eq:keb}
\dist(x,\,C) \le
\psi_B\Big(\max_{1\le i\le 2}\norm{x-L_i(x)}\Big)=
 \psi_B\Big(\max_{1\le i\le 2}\dist(x,\,C_i)\Big), \ \ \forall\ x\in B,
\end{equation}
where $\psi_B$ satisfies items~\ref{def_jcrv:2} and \ref{def_jcrv:3} in Definition~\ref{def_jcrv}.
Since projections are $1/2$-averaged, the function $\hat \phi$ in Theorem~\ref{theo:rate} is of the form
\begin{equation}\label{eq:hat_phi}
\hat \phi (u) = \psi _B^2(\sqrt{18u} ).
\end{equation}
In summary, in order to estimate the convergence rate of AP applied to $C_1$ and $C_2$, we have to estimate the function $\Phi_{\phi}^{-1}$. 
We will show how to do using the results developed so far.

We emphasize that there are many different algorithms that are covered by the quasi-cyclic algorithm and this was extensively showcased in \cite{BLT17}.
Although initially we will focus on the particular case of AP, we recall that Theorems~\ref{theo:rv} and \ref{theorem_int} apply in general to quasi-cyclic iterations, since the aforementioned results are also valid under the setting of Theorem~\ref{theo:rate}.
Our choice of using the AP algorithm is only to better emphasize the analysis technique. 
We will also focus on new results and insights that were not obtainable under our previous work \cite{LL20}.
In particular, later in Section~\ref{sec:dr} we will see an example of convergence rate for the DR algorithm, which is something that was not possible under the framework developed in \cite{LL20} and is also beyond the analysis done in \cite{BLT17} since one of the sets is not semialgebraic.

Before we move on we recall some properties of the Lambert W function which denotes 
the converse relation of the function $f(w) \coloneqq we^w$. It has two real branches $W_0$ and $W_{-1}$.
The principal branch  $W_0(x)$ is continuous and increasing on its domain  $[-e^{-1},\,\infty)$, and satisfies $W_0(x) \ge -1$, $W_0(0) = 0$, with $W_0(x)\to \infty$ as $x\to \infty$. The branch  $W_{-1}(x)$ is continuous and decreasing on its domain $[-e^{-1},\,0)$, and satisfies $W_{-1}(x) \le -1$, $W_{-1}(-e^{-1}) = -1$, with $W_{-1}(x)\to -\infty$ as $x\to 0_{-}$. 
More details on the Lambert W function and its applications to optimization can be seen on \cite{BL16}.

\paragraph{An example with  $\rho \in (0,\,1)$: a H\"older-entropic error bound.}

First we will construct two convex sets satisfying a non-H\"olderian error bound with index $\frac{1}{2}$. 
We start by letting the function $\hefun:[-0.5,\,0.5] \to \R$ be such that $\hefun(0) \coloneqq 0$ and
\begin{equation*}
\hefun(x) \coloneqq e^{2W_{-1}\big(-\frac{\abs{x}}{2}\big)}, \qquad \forall\ x \in [-0.5,\,0.5]\setminus \{0\}.
\end{equation*}
The inverse of the restriction of 
$\hefun$ to $[0,\,0.5]$ exists and, with a slight abuse of notation, we will denote it by $\gamma^{-1}$. The domain of $\gamma^{-1}$ is  $[0,\,\hefun(0.5)]$, where  $\hefun(0.5) < 1$,  and its expression on $(0,\,\gamma(0.5)]$ is given by
\begin{equation}\label{eq:ginv}
\hefun^{-1}(y) = - \sqrt{y}\ln(y).
\end{equation}
The function $\hefun^{-1}$ is increasing and concave, and satisfies 
$\hefun^{-1}(0) = 0$, which implies that for $x \ge 0$  and $\alpha > 0$ such that $(1+\alpha)x$ is in the domain of $\hefun^{-1}$ we have
\begin{equation}\label{eq:ginv:2}
\hefun^{-1}(x)  = \hefun^{-1}\Big(\frac{1}{1 + \alpha}\cdot(1 + \alpha)x + \frac{\alpha}{1 + \alpha}\cdot 0\Big) \geq  \frac{\hefun^{-1}((1+\alpha)x)}{1+\alpha}.
\end{equation}
Define
\begin{equation}\label{def_C1C2}
C_1 \coloneqq \{(x,\,\mu) \mid \hefun(x) \leq \mu \}, \ \ \ C_2 \coloneqq \{(x,\,0)\mid x \in \R\}.
\end{equation}
Let $C\coloneqq C_1\cap C_2 $, with that we have $C = \{(0,\,0)\}$.
The projection onto $C_1$ does not seem to have a straightforward closed form, but we will see in Proposition~\ref{prop:conic_rep} that $C_1$ can be represented using conic linear constraints involving an exponential cone and a rotated second-order cone. 
In particular, given $(\hat x, \hat \mu) \in \R^2$, its projection onto $C_1$  can be computed numerically by solving a conic linear program over exponential cones and second order cones.

Our goal is to show the following theorem.
\begin{theorem}
[A H\"older-entropic error bound]\label{Lip_examp}
Let $C_1,\, C_2$ be defined as in \eqref{def_C1C2} and let $B_b \coloneqq \left\{(x,\,\mu) \mid \abs{x} + \abs{\mu}\leq b \right\}$.
Then for small enough $b > 0$ there exists $\kappa > 0$ such that 
\begin{equation}\label{exp_non_eb}
\dist(w,\,C) \leq \kappa\, \hefun^{-1}\Big(\max_{1\le i\le 2}\dist(w,\, C_i)\Big), \ \ \ \forall\,w\in B_b,
\end{equation}
where  $\hefun^{-1}$ is defined as in \eqref{eq:ginv}. Furthermore, 
$\hefun^{-1}$ belongs to $\RVz_{1/2}$ and \eqref{exp_non_eb} is optimal in the following sense: over those $B_b$ with $b$ sufficiently small, $C_1,\,C_2$ do not satisfy a H\"olderian error bound with exponent $1/2$  nor \eqref{exp_non_eb} holds if $\hefun^{-1}$ is substituted with an $\RVz_{\rho}$ function with $\rho > 1/2$. \footnote{That means one cannot expect to obtain an error bound function which improves the current one $\gamma^{-1}$ by either increasing its exponent or keeping the exponent but removing the slowly varying term $-\ln(\cdot)$.}
\end{theorem}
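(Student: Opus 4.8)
The plan is to prove the three assertions in turn: membership $\hefun^{-1}\in\RVz_{1/2}$, the error bound \eqref{exp_non_eb}, and its optimality. Membership is immediate from \eqref{eq:ginv}: writing $\hefun^{-1}(y)=y^{1/2}\cdot(-\ln y)$, the factor $y^{1/2}$ lies in $\RVz_{1/2}$ while $-\ln(\cdot)$ is positive near $0$ with $\lim_{y\to0_+}\ln(\lambda y)/\ln y=1$, hence slowly varying (in $\RVz_{0}$), so the product is in $\RVz_{1/2}$ by \eqref{eq:rvz_calc}. For the geometric parts I would first record, valid once $b$ is small, the elementary facts: (a) $\hefun^{-1}(y)\ge y$ for small $y$, since $\hefun^{-1}(y)/y=-\ln(y)/\sqrt y\to\infty$; (b) consequently $\hefun(t)\le t$ for small $t$, by applying (a) to $y=\hefun(t)$; (c) $\hefun^{-1}(\lambda t)\ge\lambda\,\hefun^{-1}(t)$ for $\lambda\in(0,1]$ and small $t$, a reformulation of \eqref{eq:ginv:2}; and (d) $\hefun$ is even, continuous, convex and increasing on $[0,0.5]$ with $\hefun(0)=0$, so $C_1$ is closed and convex and every point of $C_1$ has second coordinate at least $\hefun(\abs{\cdot})$ of its first coordinate. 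Since $C=C_1\cap C_2=\{(0,0)\}$ we have $\dist(w,C)=\norm{w}$, and by the reflection $(x,\mu)\mapsto(-x,\mu)$ we may assume $x\ge 0$.

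To prove \eqref{exp_non_eb}, fix $w=(x,\mu)\in B_b$ and put $d_1:=\dist(w,C_1)$, $d_2:=\dist(w,C_2)=\abs{\mu}$, $M:=\max\{d_1,d_2\}$; one checks $M\le 2b$ (so all thresholds above apply) and $\norm{w}\le x+\abs{\mu}$. I would split into three regimes. (1) If $\mu\ge\hefun(x)$ then $w\in C_1$, so $x\le\hefun^{-1}(\mu)$ and $M=\mu$, whence $\norm{w}\le\sqrt{\hefun^{-1}(\mu)^2+\mu^2}\le\sqrt2\,\hefun^{-1}(M)$ by (a). (2) If $\mu<0$, every point of $C_1$ has nonnegative second coordinate, so $d_1\ge\abs{\mu}=d_2$; if in addition $d_1\ge x/2$ (which covers $x=0$), then $\norm{w}\le x+\abs{\mu}\le 2d_1+d_2\le 3M\le 3\,\hefun^{-1}(M)$ by (a). (3) In the remaining ``sliver'' regime --- either $0\le\mu<\hefun(x)$ with $x>0$, or $\mu<0$ with $d_1<x/2$ --- let $(s^*,t^*)\in C_1$ realize $d_1$; from $\abs{s^*-x}\le d_1<x/2$ we get $s^*>x/2>0$, hence $t^*\ge\hefun(s^*)\ge\hefun(x/2)$, and a short case check comparing $\mu$ with $t^*$ (recalling $d_2=\abs{\mu}$) gives $M\ge\tfrac12\hefun(x/2)$ in either sub-case. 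Then by (c), $\hefun^{-1}(M)\ge\hefun^{-1}\big(\tfrac12\hefun(x/2)\big)\ge\tfrac12\hefun^{-1}(\hefun(x/2))=x/4$, and since $\abs{\mu}\le M\le\hefun^{-1}(M)$ we conclude $\norm{w}\le x+\abs{\mu}\le 5\,\hefun^{-1}(M)$. So \eqref{exp_non_eb} holds with, e.g., $\kappa:=5$ once $b$ is small.

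For optimality I would test against $w_n:=(x_n,0)$ with $x_n\downarrow 0$ (so $w_n\in B_b$ eventually): then $\dist(w_n,C)=x_n$, $\dist(w_n,C_2)=0$, and $0<\dist(w_n,C_1)\le\hefun(x_n)$, the upper bound by projecting vertically onto $(x_n,\hefun(x_n))\in C_1$. Hence $t_n:=\max_i\dist(w_n,C_i)=\dist(w_n,C_1)\downarrow 0$, and using that $\hefun^{-1}$ is increasing with $\hefun^{-1}(\hefun(x_n))=x_n$ we get the matching lower bound $\hefun^{-1}(t_n)\le\dist(w_n,C)$. If $\dist(w,C)\le\psi(\max_i\dist(w,C_i))$ held on $B_b$ with $\psi(t)=c\,t^{1/2}$, evaluating at $w_n$ would force $c\ge\hefun^{-1}(t_n)/t_n^{1/2}=-\ln t_n\to\infty$, a contradiction; and if it held with $\psi=\kappa h$ for $h\in\RVz_{\rho}$ with $\rho>1/2$, then $\kappa\ge\hefun^{-1}(t_n)/h(t_n)$, but $\hefun^{-1}/h\in\RVz_{1/2-\rho}$ has negative index, so the quotient tends to $+\infty$ by \eqref{lim_rv0_neg} --- again a contradiction. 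The main obstacle is regime (3): there $d_1$ and $d_2$ are both individually small while $\dist(w,C)$ is comparatively large, and the delicate step is the geometric observation that the projection of $w$ onto $C_1$ cannot have first coordinate much below $x$, which is exactly what forces $\max_i\dist(w,C_i)\gtrsim\hefun(x/2)$ and, after inverting via concavity of $\hefun^{-1}$, recovers the right constant.
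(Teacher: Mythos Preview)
Your proof is correct and takes a genuinely different route from the paper's. The paper first extends $\hefun$ to a slightly larger open interval, proves convexity there (via convexity of the inverse on a half-line), and extracts a Lipschitz constant $L$ for $\hefun$ on $[-0.5,0.5]$; it then uses $\hefun(x)\le L\abs{x-\bar x}+\hefun(\bar x)$ at the projection $(\bar x,\hefun(\bar x))$ of $(x,0)$ onto $C_1$ to bound $\hefun(x)$ (and separately $\hefun(\mu)$) by a constant multiple of $\max_i\dist(w,C_i)$, and inverts. Your argument bypasses the Lipschitz machinery entirely: the geometric case split isolates a ``sliver'' regime and the decisive observation there is that the $C_1$--projection of $w$ cannot move the first coordinate by more than $d_1$, so if $d_1<x/2$ its second coordinate is at least $\hefun(x/2)$, forcing $M\ge\tfrac12\hefun(x/2)$ and then $x\le 4\,\hefun^{-1}(M)$ via concavity of $\hefun^{-1}$. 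This is more elementary, yields an explicit constant ($\kappa=5$), and your optimality argument is slightly cleaner than the paper's in that it compares $\hefun^{-1}(t_n)$ directly with $\psi(t_n)$ and so does not require $\psi$ to be monotone.

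One small gap to close: in the first sub-case of regime~(3) (namely $0\le\mu<\hefun(x)$ with $x>0$) you invoke $d_1<x/2$, but this is not part of the case hypothesis there --- it is only assumed in the $\mu<0$ sub-case. It is, however, automatic once $b$ is small: projecting $(x,\mu)$ vertically onto $(x,\hefun(x))\in C_1$ gives $d_1\le\hefun(x)-\mu\le\hefun(x)$, and your fact~(a) (that $\hefun^{-1}(y)/y\to\infty$) implies $\hefun(t)/t\to 0$, hence $\hefun(x)<x/2$ for all $x\in(0,b]$ when $b$ is chosen small enough. With that one line added, the argument is complete.
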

\begin{proof}
First, we can extend $\gamma$ from its domain $[-0.5,\,0.5]$ to $(-2e^{-1},\,2e^{-1})$: let $\widehat{\gamma}: (-2e^{-1},\,2e^{-1})\to\R$ be such that $\widehat{\gamma}(0):= 0$ and
\begin{equation*}
\widehat{\gamma}(x) \coloneqq e^{2W_{-1}\big(-\frac{\abs{x}}{2}\big)}, \qquad \forall\ x \in (-2e^{-1},\,2e^{-1})\setminus \{0\}.
\end{equation*}
Note from the properties of $W_{-1}$ on $(-e^{-1}, \, 0)$  that $\widehat{\gamma}$ is continuous and decreasing on $(-2e^{-1},\,0)$. Hence, the inverse of the restriction of $\widehat{\gamma}$ to $(-2e^{-1},\,0)$ exists and, with a slight abuse of notation, we call it $\widehat{\gamma}^{-1}$.
By calculation we have $\widehat{\gamma}^{-1}(y) = \sqrt{y}\ln(y)$ for $y\in(0,\,e^{-2})$, which is convex and decreasing. By \cite[Proposition~2]{MM08} we have $\widehat{\gamma}$ is convex and decreasing on $(-2e^{-1},\,0)$. Since $\widehat{\gamma}(x) = \widehat{\gamma}(-x)$ and $\widehat{\gamma}$ is continuous at $0$, we have that $\widehat{\gamma}$ is convex on its domain\footnote{It suffices to prove that the epigraph $S:=\left\{(x,\,u) \mid \widehat{\gamma}(x)\le u\right\}$ is convex, i.e., for any $(x,\,u),\,(y,\,v)\in S$ we have $\widehat{\gamma}(\alpha x + (1 - \alpha)y) \le \alpha u + (1 - \alpha)v$ holds for all $\alpha\in(0,\,1)$. Without loss of generality, we assume that $x < 0 < y$ and $z:= \alpha x + (1 - \alpha)y < 0$. The convexity of $\widehat{\gamma}$ on $(-2e^{-1},\,0)$, together with $\widehat{\gamma}(0) = 0$ and its continuity  at $0$, gives $\widehat{\gamma}(z) \le \frac{z}{x}\widehat{\gamma}(x)$. Combing this with $\frac{z}{x}\le \frac{z-y}{x-y} = \alpha$, we obtain $\widehat{\gamma}(z)\le \alpha\widehat{\gamma}(x)\le \alpha u \le \alpha u + (1 - \alpha)v$, which completes the proof.}.
Given that $\gamma$ is the restriction of $\widehat{\gamma}$ on $[-0.5,\,0.5$] and recalling that a convex function is Lipschitz continuous over any compact set contained in the interior of its domain (e.g., \cite[Theorem~3.1.1]{HL93}), we conclude that $\hefun$ is Lipschitz continuous on its domain.

Let $L$ denote the Lipschitz constant of $\hefun$ on its domain, and define $\widehat{L} \coloneqq \max\{L,\,1\}$, $\widehat{b}:= \frac{\hefun(0.5)}{2\sqrt{2}\widehat{L}}$. 
For any $w = (x,\,\mu)\in B_{\widehat{b}}$,
\begin{equation}\label{nb_gua}
2\sqrt{2}\widehat{L}\max_{1\le i\le 2}\dist(w,\,C_i) \le 2\sqrt{2}\widehat{L}\,\dist(w,\, C)\le 2\sqrt{2}\widehat{L}(|x| + |\mu|) \le 2\sqrt{2}\widehat{L}\widehat{b} = \hefun(0.5).
\end{equation}
Moreover, let $(\bar{x},\, \hefun(\bar{x}))$ denote the projection of $(x,\,0)$ onto $C_1$\footnote{If the projection were of the form $(\bar{x},\, \bar{\mu})$ with $\gamma(\bar{x}) < \bar{\mu}$, we would be able to get a point closer to $(x,\,0)$ by replacing $\bar{\mu}$ with $\gamma(\bar{x})$.}, in particular 
$\bar{x} \in \dom\,\hefun$. 
From  $\widehat{L} \geq 1$ and $\hefun(0.5) < 1$ we have $\widehat b \leq 0.5$, and since 
$w \in B_{\widehat{b}}$, we have $x,\,\mu\in\dom\,\hefun$. 

Using the Lipschitz continuity of $\hefun$ on its domain, we then obtain
\begin{equation}\label{bf_g}
\begin{split}
\hefun(x) & \leq L\abs{x-\bar x} + \hefun(\bar{x}) \leq \widehat{L}(\abs{x-\bar x} + \hefun(\bar{x})) \leq \sqrt{2}\widehat{L}\,\dist\left((x,\,0),\,C_1\right) \\
&  \le \sqrt{2}\widehat{L}\left(\dist(w,\, C_1) + \|w - (x,\,0)\|\right) = \sqrt{2}\widehat{L}\left(\dist(w,\, C_1) + \dist(w,\,C_2)\right) \\
& \le 2\sqrt{2}\widehat{L}\max_{1\le i\le 2}\dist(w,\,C_i),\\
\hefun(\mu) & \le L|\mu| \le \widehat{L}|\mu| = \widehat{L}\,\dist(w,\,C_2) \le \widehat{L}\max_{1\le i\le 2}\dist(w,\,C_i).
\end{split}
\end{equation}
Due to \eqref{nb_gua}, the two right-hand side terms of \eqref{bf_g} are in the domain of $\hefun^{-1}$. Thus, for each inequality in \eqref{bf_g}, we can apply $\hefun^{-1}$ at both sides obtain
\begin{equation*}
|x| \le \hefun^{-1}\Big(2\sqrt{2}\widehat{L}\max_{1\le i\le 2}\dist(w,\,C_i) \Big), \ \ \ |\mu| \le \hefun^{-1}\Big(\widehat{L}\max_{1\le i\le 2}\dist(w,\,C_i)\Big).
\end{equation*}
This together with \eqref{eq:ginv:2}  further implies
\begin{equation*}
\begin{split}
\dist(w,\, C)   \le |x| + |\mu|  & \le \hefun^{-1}\Big(2\sqrt{2}\widehat{L}\max_{1\le i\le 2}\dist(w,\,C_i) \Big) +  \hefun^{-1}\Big(\widehat{L}\max_{1\le i\le 2}\dist(w,\,C_i)\Big) \\
& \le \big(2\sqrt{2} + 1\big)\widehat{L}\hefun^{-1}\Big(\max_{1\le i\le 2}\dist(w,\,C_i) \Big).
\end{split}
\end{equation*}
This means that \eqref{exp_non_eb} holds for any $b\in(0,\,\widehat{b})$ and  $\kappa \coloneqq \left(2\sqrt{2} + 1\right)\widehat{L}$. 

Next, we show that the error bound we have in \eqref{exp_non_eb} is optimal in the sense that no error bound function in $\RVz$ with an index {greater} than $1/2$ is admissible. 
We will also show that no H\"olderian error bound with exponent $1/2$ holds for $C_1$ and $C_2$. 

First, suppose that $\psi \in \RV_{\rho}^0$ is a  nondecreasing function such that for sufficiently small $b > 0$,
\begin{equation*}
\dist(w,\,C) \le \psi\Big(\max_{1\le i\le 2}\dist(w,\, C_i)\Big), \qquad \forall\, w= (x,\,\mu) \in B_b.
\end{equation*}
Let $w_k:= (t_k,\,0)$ with $t_k\rightarrow 0_+$ and $t_k\in(0,\,\min(b,\,0.5))$. Then we would have 
\begin{equation}\label{eq:hent_opt}
t_k = \dist(w_k,\,C) \leq \psi\left(\dist((t_k,\,0),\,C_1)\right) \le \psi\left(\norm{(t_k,\,0)-(t_k,\,\hefun(t_k))}\right) = \psi\left(\hefun(t_k)\right).
\end{equation}
Since $\hefun^{-1} \in \RV_{1/2}^0$, the restriction of $\hefun$ to $(0,\,0.5]$ belongs to 
$\RV_{2}^0$ by \eqref{eq:rvz_inv}. With that, the composition $\psi \circ \gamma$ belongs to $\RV^0$ and has index $2\rho$, see \eqref{eq:rvz_calc}. If $\rho > 1/2$,  we then have $\frac{t}{\psi(\hefun(t))}\in\RVz$ with a negative index by \eqref{eq:rvz_calc}. By \eqref{lim_rv0_neg}, this implies that $t/ \psi(\hefun(t)) \to + \infty$ as $t \to 0_+$, which contradicts \eqref{eq:hent_opt}.  Therefore, $\rho$ must be in $[0,\,1/2]$.

Also, if there exists a H\"olderian error bound with exponent $1/2$ for $C_1$ and $C_2$, then \eqref{eq:hent_opt} holds with $\psi(t) = c\cdot t^{1/2}$, where $c > 0$. Consequently,
recalling that $e^{W_{-1}(t)} = t/W_{-1}(t)$ holds, we have  $t_k/\psi(\hefun(t_k)) = 2|W_{-1}(-|t_k|/2)|/c\rightarrow \infty$, which contradicts \eqref{eq:hent_opt}. This completes the proof.
\end{proof}

Next, we consider the problem of estimating the convergence rate of 
the AP algorithm when applied to $C_1$ and $C_2$ as defined in \eqref{def_C1C2}.  
Denote the sequence generated by the AP algorithm by $\{x^k\}$.
Let $B$ be a bounded set containing $\{x^k\}$ and $r > 0$ be such that $B\subseteq\B_r$. Let $b > 0$ and $\kappa >0$ be given as in Theorem~\ref{Lip_examp} such that \eqref{exp_non_eb} holds.
One can see that there exists some $c\in(0,\,e^{-2})$ such that $w\in B_b$ whenever $w\in B$ and $\max\limits_{1\le i\le 2}\dist(w,\,C_i)\le c$ hold\footnote{Suppose that such a constant $c$ does not exist. Then we can construct a sequence $\{c_k\}$ with $c_k\to 0$, a sequence $\{w^k\}\subseteq B$ satisfying $\max\limits_{1\le i\le 2}\dist(w^k,\,C_i)\le c_k$ but $w^k\not\in B_b$, for all $k$. Since $B$ is bounded, without loss of generality, we may assume that $w^k$ converges to some $w^*$, which further leads to $w^*\in C_1\cap C_2 = (0,\, 0)$.
Since $B_b$ contains a neighborhood of $(0,\,0)$, we have $w^k \in B_b$ for large $k$ which is a contradiction.}. With that, we let $\psi_B$ be as follows
\begin{equation}\label{he_psi_b}
\psi_B(t) := \begin{cases}
0 & \text{if }\ t =0, \\
-\kappa\sqrt{t}\ln\,(t)  & \text{if }\ 0 < t \le c, \\
\max\left\{r,\, -\kappa\sqrt{c}\ln\,(c)\right\}  &  \text{if }\ t > c.
\end{cases}
\end{equation}
The function $\psi_B$ satisfies items \ref{def_jcrv:2} and \ref{def_jcrv:3}  of Definition~\ref{def_jcrv}. Now, we show that error bound condition \eqref{eq:keb} holds. 
Given any $x\in B$, we consider three cases: if $\max\limits_{1\le i\le 2}\dist(x,\,C_i) = 0$, then $x\in C$ and hence \eqref{eq:keb} holds, thanks to $\psi_B(0) = 0$; 
if $0 < \max\limits_{1\le i\le 2}\dist(x,\,C_i) \le c$, then $x\in B_b$, which together with  \eqref{exp_non_eb} and the second case in \eqref{he_psi_b}  implies that \eqref{eq:keb} holds; if $\max\limits_{1\le i\le 2}\dist(x,\,C_i)> c$, we see from $C = \{(0,\,0)\}$, $x\in B\subseteq \mathbb{B}_r$ and the third case in \eqref{he_psi_b}  that  \eqref{eq:keb} holds.

From what is discussed above, we then have that $T_1 = P_{C_1}$ and $T_2= P_{C_2}$ are jointly Karamata regular over $B$ with regularity function $\psi_B$ defined as in \eqref{he_psi_b}. 
We will analyze the convergence rate as follows. 
We will compute the function $\phi$ appearing in Theorem~\ref{theo:rate} and use Theorem~\ref{theorem_int} to estimate the asymptotic properties of 
$\Phi_{\phi}^{-1}$.
Recalling  \eqref{eq:hat_phi} and the relation between $\phi$ and $\hat{\phi}$ in \eqref{def_pf}, we observe that when $t$ is small enough,
\begin{equation}\label{eq:he_phi}
\phi(t) = \psi_B^2(\sqrt{18t}) = {3\sqrt{2}\kappa^2}\sqrt{t}(\ln(\sqrt{18t}))^2 \overset{c}{\sim} \sqrt{t}(\ln(t))^2 \in\RVz_{1/2}.
\end{equation}
Let $\hat f(t) \coloneqq \sqrt{t}(\ln(t))^2$. Note that both $\phi$ and $\hat f$ are  continuous and increasing on $(0,\,a]$ for some small enough $a$. After restricting them on $(0,\,a]$, we have that both $\phi(t)$ and $\hat f(t)$ satisfy the assumptions in Theorem~\ref{theo:rv}. On the other hand, let $\hat g(s) := \frac{1}{s\hat f^{-}(1/s)}$ for $s\in[1/\delta,\,\infty)$. Now,  we first apply Proposition~\ref{prop:asympt_equiv} to $\phi$ and $\hat f$ by recalling \eqref{eq:he_phi}, and then apply Theorem~\ref{theorem_int}~(ii) by letting $f = \hat{f}$ and $g = \hat g$, and therefore obtain
\begin{equation}\label{case_2_cite2}
\sqrt{\Phi_{\phi}^{-1}(s)}\overset{c}{\sim} \sqrt{\Phi_{\hat f}^{-1}(s)} \sim\sqrt{\frac{1}{\hat g^{\leftarrow}(s)}}   \ \ {\rm as}\ \ s\to\infty.
\end{equation}
That is, instead of evaluating $\sqrt{\Phi_{\phi}^{-1}(s)}$ directly which may be quite cumbersome, we may estimate the convergence rate through the (relatively)
simpler expression $\sqrt{\frac{1}{\hat g^{\leftarrow}(s)}}$.
So our next task is evaluating $\hat g^{\leftarrow}(s)$.

Since $\hat f$ is continuous and increasing on its domain $(0,\,a]$, its usual inverse exists with $\hat f^{-1}(u) = \hat f^{-}(u)$ for small enough positive $u$. Let $h := \hat{f}^{-1}$. For small enough $u$, we have
\begin{equation}\label{inv_phi_first}
u = \hat f(h(u)) = \sqrt{h(u)}\left(\ln(h(u))\right)^2.
\end{equation}
Let \[z_u\coloneqq \frac{\ln(h(u))}{4}.\] Hence, $z_u\to -\infty$ as $u\to 0_+$. 
We also have $e^{z_u} = {h(u)}^{1/4}$.
Then \eqref{inv_phi_first} together with $z_u < 0$ as $u\to 0_+$ implies that
\begin{equation}\label{eq:zs}
-\frac{\sqrt{u}}{4} = z_ue^{z_u},
\end{equation}
i.e., for sufficiently small positive $u$,
\[
W_{-1}\left(-\frac{\sqrt{u}}{4}\right) = z_{u},
\]
where we note that this must be indeed the $W_{-1}$ branch because the $W_0$ branch is lower bounded by $-1$ and $z_{u}$ goes to $-\infty$ as $u\to 0_+$.
Consequently, using \eqref{eq:zs} we have for sufficiently small positive $u$,
\begin{equation*}
h(u) = (e^{z_u})^4 = \left(-\frac{\sqrt{u}}{4 z_u}\right)^4 = \frac{u^2}{256z_u^4} = \frac{u^2}{256\big[W_{-1}(-\frac{\sqrt{u}}{4} )\big]^4}.
\end{equation*}
Therefore, for large enough $s$, we have 
\begin{equation}\label{g_form_2}
\hat g(s) = \frac{1}{s\hat f^{-}(1/s)} = \frac{1}{sh(1/s)} = 256s\left[W_{-1}\left(-\frac{1}{4\sqrt{s}}\right)\right]^4.
\end{equation}
By the properties of $W_{-1}$, we have that $\hat g$ is increasing and continuous on $[M,\,\infty)$ for some large enough $M > 1/\delta$ and $\hat g(s)\to\infty$ as $s\to\infty$. We note from Theorem~\ref{theorem_int} that $\hat g$ is locally bounded. 
By Proposition~\ref{lb_proposition}, $\hat g^{\leftarrow}(s) = \hat g^{-1}(s)$ holds for large enough $s$. 
Let $w(s):=\hat g^{-1}(s)$ for large enough $s$. Let $t_s := W_{-1}\big(-\frac{1}{4\sqrt{w(s)}}\big)$. Then we obtain from \eqref{g_form_2} that
\begin{equation}\label{g_form_2_sim}
s = \hat g(w(s)) = 256w(s)t_s^4.
\end{equation}
By definition of the Lambert W function, we 
have $t_se^{t_s} = -\frac{1}{4\sqrt{w(s)}}$, that is, 
$\sqrt{w(s)} = -\frac{e^{-t_s}}{4 t_s}$. Since $t_s < 0$, in combination with \eqref{g_form_2_sim}, we obtain
\begin{equation*}
\frac{\sqrt{s}}{4} = 4 \left(-\frac{e^{-t_s}}{4t_s}\right) \cdot \sqrt{t_s^4} = -t_se^{-t_s},
\end{equation*}
which implies that 
\[
W_{0}\left(\frac{\sqrt{s}}{4} \right) = -t_s.
\]
This together with \eqref{case_2_cite2} and \eqref{g_form_2_sim} implies that
\begin{equation*}
\sqrt{\Phi_{\phi}^{-1}(s)} \overset{c}{\sim} \sqrt{\frac{1}{\hat g^{\leftarrow}(s)}}  = \sqrt{\frac{1}{\hat g^{-1}(s)}} = \sqrt{\frac{1}{w(s)}} = \sqrt{\frac{256 t_s^4}{s}} = \frac{16t_s^2}{\sqrt{s}}  = \frac{16\big[W_0(\frac{\sqrt{s}}{4})\big]^2}{\sqrt{s}} \ \ {\rm as}\ \ s\to\infty.
\end{equation*}
Now, $se^s$ is a rapidly varying function, so $W_0$ is slowly varying by \eqref{eq:rav_inv2}, which implies that 
$W_0(\lambda s) \sim W_0(s)$ for every $\lambda > 0$. Therefore,
\begin{equation}\label{main_exp2}
\sqrt{\Phi_{\phi}^{-1}(s)}\overset{c}{\sim}  \frac{\big[W_0({\sqrt{s}})\big]^2}{\sqrt{s}} \ \ {\rm as}\ \ s\to\infty.
\end{equation}
This implies that when the AP algorithm is used to find a feasible point in the intersection of $C_1$ and $C_2$  defined as in \eqref{def_C1C2},
the sequence $\{x^k\}$ converges  at least at a rate proportional to $\frac{\big[W_0({\sqrt{k}})\big]^2}{\sqrt{k}}$. 
Using our previous techniques in \cite{LL20} we would only be able to conclude 
that, up to multiplicative constants, the convergence rate is at least as fast as $(1/k)^{\frac{1}{2}-\varepsilon}$ for all positive $\varepsilon$ satisfying $1/2 - \varepsilon > 0$, see \cite[item~(ii) Theorem~5.7]{LL20}. Notably, $\varepsilon$ cannot be taken to be zero in the context of \cite[Theorem~5.7]{LL20}.

The rate in \eqref{main_exp2}, however, is more explicit and gives the asymptotic equivalence class of the estimate $\sqrt{\Phi_{\phi}^{-1}(k)}$. 
It is interesting to note that $\big[W_0({\sqrt{s}})\big]^2/\sqrt{s}$ does not correspond to a sublinear rate, however, we can see from
\begin{equation*}
\lim_{s\to\infty}\frac{\big[W_0({\sqrt{s}})\big]^2/\sqrt{s}}{1/\sqrt{s}} = \infty\ \ \ \  \mbox{and}\ \ \ \  \lim_{s\to\infty}\frac{\big[W_0({\sqrt{s}})\big]^2/\sqrt{s}}{(1/s)^{1/2-\varepsilon}} = 0
\end{equation*}
that $\big[W_0({\sqrt{s}})\big]^2/\sqrt{s}$ is squeezed between 
$1/\sqrt{s}$ and $(1/s)^{1/2-\varepsilon}$ for all positive $\varepsilon$ such that $1/2-\varepsilon > 0$.
In this way, \eqref{main_exp2} gives a finer evaluation of the convergence rate.

Before we move on to the next example, we have the following result  which implies that the problem of projecting a point onto $C_1$ can be cast a conic linear program (CLP) over exponential cones and second-order cones.
We recall that the exponential cone $\expCone$ is defined as
\[
\expCone\coloneqq\left \{(x_1,x_2,x_3)\in \R^3\;|\;x_2>0,x_3\geq x_2e^{x_1/x_2}\right \} \cup \left \{(x_1,x_2,x_3)\;|\; x_1 \leq 0, x_2=0, x_3\geq 0  \right \}
\]
and the rotated second-order cone in $\R^3$ is defined as 
\[
\mathcal{Q}_r^3\coloneqq \{(x_1,x_2,x_3) \in \R^3 \mid 2x_1 x_2 \geq x_3^2, x_1,x_2 \geq 0\}.
\]

\begin{proposition}[Conic linear representation of $C_1$]\label{prop:conic_rep}
	$(x,\mu) \in C_1$ if and only if 
	there exists $s,v,t\in\R$ such that 
	\begin{equation}\label{eq:conic_rep}
	\begin{aligned}
	(t/2,v,1) \in \expCone,&\quad 0\leq s\leq \mu,&\quad s\leq \gamma(0.5),\\
	(0.5,s,v) \in \mathcal{Q}_r^3,&\quad-t \leq x \leq t.&
	\end{aligned}
	\end{equation}
\end{proposition}
\begin{proof}
	See Appendix~\ref{app:rep}.
\end{proof}
Let $\tilde C_1 \subseteq \R^5$ denote the set  consisting of the $(x,\mu, s,v,t)$ satisfying \eqref{eq:conic_rep}. 
Also let $\mathcal{Q}_2^3$ denote the usual second-order cone in $\R^3$ so that $\mathcal{Q}_2^3 \coloneqq 
\{(x_1,x_2,x_3) \mid x_1^2 \geq x_2^2 + x_3^2, x_1 \geq 0\}$.
Then, the projection of $(\hat x, \hat \mu) \in \R^2$ onto $C_1$ can be computed by solving 
\[
\min_{u,x,\mu, s,v,t\in \R} u \quad \text{subject to } (u,x-\hat x, \mu - \hat \mu) \in \mathcal{Q}_2^3,\,\, (x,\mu, s,v,t) \in \tilde C_1,
\]
which is a conic linear program over second-order cones and a exponential cone and can be solved numerically via a variety of solvers \cite{PY22,KT24,GC24,MC2020}.

\paragraph{An example with $\rho = 1$: Better estimates for the exponential cone.}

As shown in \cite[Section~4.2.1]{LLP20} and \cite[Section~6.2]{LL20}, 
 a so-called \emph{entropic error bound} holds between 
$C_1 \coloneqq \expCone$ and $C_2 \coloneqq  \left\{x \in \R^3 \mid x_2 = 0\right\}$.
More precisely, there exists a constant $\kappa_B > 0$ and a function $\psi_B$ such that \eqref{eq:keb} holds and $\psi_B$ satisfies
\begin{equation}\label{eq:psi_ent}
\psi_B(a) = -\kappa_B a \ln(a)
\end{equation}
for sufficiently small $a$.


Consider the AP  algorithm applied to the feasibility problem of finding $x\in C_1 \cap C_2$. 
Although the projection onto $\expCone$ does not seem to have a known closed form, there is a fast and reliable numerical algorithm available \cite{Fr23}.
We previously analyzed the convergence rate of AP applied to $C_1$ and $C_2$ in \cite[Section~6.2]{LL20} and we were able to conclude the rate is faster than any sublinear rate, but the estimate is worse than any linear rate. 
This was also observed empirically, see Figure~1 in \cite{LL20}. 
Here, however, we obtain a finer result that indicates where the rate is located in the gap between linear and sublinear rates.

Continuing our analysis, from \eqref{eq:psi_ent} and \eqref{eq:hat_phi}, we conclude that the function $\phi$ in Theorem~\ref{theo:rate} takes the form
\begin{equation*}
\phi(t) = \psi _B^2(\sqrt{18t} ) = \kappa t\left(\ln(t) + \ln(18)\right)^2
\overset{c}{\sim} t(\ln(t))^2\in\RVz_1,
\end{equation*}
for sufficiently small $t$ and $\kappa:= \frac{9}{2}\kappa_B^2$.
Note that $t = o(\phi(t))$ as $t\to 0_+$. Moreover,  $\phi$ is continuous and  increasing on $(0,\,a]$ for some small enough $a$. So, we restrict $\phi$ on $(0,\,a]$.  Let $g(s):=\frac{1}{s\phi^{-}(1/s)}$ for $s\in[1/\delta,\,\infty)$.  Applying the second half of Theorem~\ref{theorem_int}~(iii) and letting $f:=\phi$, we obtain
\begin{equation}\label{item3}
\sqrt{\Phi_{\phi}^{-1}(s)} = \frac{1}{o\left(\sqrt{g^{\leftarrow}(s)}\right)} \ \ {\rm as}\ \ s\to\infty.
\end{equation}
Let $h(s):= \phi^{-1}(1/s)$ be defined for large enough $s$.
Since $\phi$ is increasing and continuous on $(0,\,a]$, $h$ is continuous and decreasing for large enough $s$, e.g., see \cite[Remark~1 and Proposition~1]{EH13}.
Moreover, for large $s$, it holds that $\phi^{-1}(1/s) = \phi^{-}(1/s)$ and
\begin{equation*}
1/s = \phi(h(s)) = \kappa\,h(s)\left(\ln(h(s)) + \ln(18)\right)^2.
\end{equation*}
Consequently, for large enough $s$,
\begin{equation}\label{eq:1s}
g(s) = \frac{1}{s\phi^{-}(1/s)} = \frac{1}{s\phi^{-1}(1/s)} = \frac{1/s}{h(s)}  =\kappa\left(\ln(h(s)) + \ln(18)\right)^2.
\end{equation}
Note that $h$ is decreasing and $h(s)\to 0_+$ as $s\to\infty$. This implies that $g$ is increasing and continuous on $[M,\,\infty)$ for some large enough $M > 1/\delta$. Moreover, $g(s)\to\infty$ as $s\to\infty$. Note from Theorem~\ref{theorem_int} that $g$ is locally bounded. By Proposition~\ref{lb_proposition}, we conclude that $g^{\leftarrow}(s) =  g^{-1}(s)$ holds for large enough $s$.
Define \[w(s) \coloneqq g^{-1}(s)\] for large enough $s$. From \eqref{eq:1s} we have 
\begin{equation}\label{eq:gws}
s = g(w(s))  = \kappa\left(\ln\left(h(w(s))\right) + \ln(18)\right)^2.
\end{equation}
Note that when $s\to\infty$, we have $w(s)\to\infty$, $h(w(s))\to 0_+$ and therefore $\ln(h(w(s)))\to -\infty$.  Recalling that $h^{-1}(u) = \frac{1}{\phi(u)}$ holds for small enough $u$, we solve \eqref{eq:gws} in terms of $w(s)$ and conclude that for large enough $s$  we have
\begin{equation*}
w(s) = h^{-1}\Big(e^{-\sqrt{\frac{s}{\kappa}} - \ln(18)}\Big) = \frac{1}{\phi\Big(e^{-\sqrt{\frac{s}{\kappa}} - \ln(18)}\Big)} = \frac{18}{se^{-\sqrt{\frac{s}{\kappa}}}}.
\end{equation*}
Let $c:= e^{\frac{1}{2\sqrt{\kappa}}}$. We then have $c > 1$. 
Combining the above equation with \eqref{item3} and recalling that for large $s$ we have 
$w(s) = g^{-1}(s) = g^{\leftarrow}(s)$, we obtain
\begin{equation*}
\sqrt{\Phi_{\phi}^{-1}(s)} = \frac{1}{o\left(\sqrt{18}/\left(\sqrt{s}c^{-\sqrt{s}}\right)\right)} \ \ {\rm as}\ \ s\to\infty.
\end{equation*}
This is further equivalent to
\begin{equation}\label{main_exp3}
\sqrt{s}c^{-\sqrt{s}}= o\left(\sqrt{\Phi_{\phi}^{-1}(s)}\right) \ \ {\rm as}\ \ s\to\infty.
\end{equation}
As a reminder, we already knew from \cite[Theorem~4.7 and Proposition~6.9~(iii)]{LL20} that the convergence rate of 
AP applied to $C_1$ and $C_2$ is faster than any sublinear rate and 
the estimate obtained therein is slower than any linear rate.
We called such a rate \emph{almost linear}.
The relation \eqref{main_exp3} refines this result and tells us that 
the predicted rate is not only slower than any linear rate but, up to multiplicative constants, it is 
slower than $\sqrt{k}c^{-\sqrt{k}}$. This is a stronger 
statement because $\sqrt{k}c^{-\sqrt{k}}$ goes to zero slower 
than $d^{-k}$ for any $d > 1$, i.e.,
\begin{equation*}
\lim_{k\to\infty}\frac{d^{-k}}{\sqrt{k}c^{-\sqrt{k}}} = 0.
\end{equation*}

\paragraph{Another example with $\rho \in (0,1)$.} 
In Theorem~\ref{Lip_examp}, we saw an example involving the $-1$ branch of the Lambert W function.
Here we will take a look at an instance related to the $0$ branch.
Consider the following two sets:
\begin{equation}\label{lambert_exp}
C_1\coloneqq \left\{(x,\,t)\mid x\ge -e^{-1},\, t \le W_0(x)\right\}, \ \ \ \ C_2\coloneqq \left\{(-e^{-1},\,t)\mid t\in\R \right\},
\end{equation}
that is, $C_1$ is the hypograph of $W_0$.
We define $C\coloneqq C_1\cap C_2$, so 
that $C= \left\{(-e^{-1},\,t)\mid t\le -1\right\}$ and $P_C(w) = (-e^{-1},\,\min\{t,\,-1\})$ for all $w = (x,\,t)$.

The boundary of $C_1$ is the union of the graph of $W_0$ (i.e., pairs of the form $(x,W_0(x))$) with the half line $\{(-e^{-1},t) \mid t \leq W_0(-e^{-1})=-1\}$. 
For our analysis, it will be more convenient to use $W_0^{-1}$ to describe the boundary.
Indeed, we can also describe the boundary of $C_1$ as the union of a 
half-line with the graph of $W_0^{-1}$ (as a function of $t$, so that we have pairs of the form $(W_0^{-1}(t),t)$).
In terms of the original coordinates in $x$ and $t$, this corresponds to reflecting $C_1$ along the $x = t$ line, i.e., ``permuting $x$ and $t$''.

With that in mind, for convenience,  we first reflect $C_1$ and $C_2$ along the  line $x = t$, and then shift them along the $t$-axis with length $e^{-1}$ and along the $x$-axis with length $1$ to obtain
\begin{equation}\label{def_hat}
\widehat{C}_1 = \left\{(x,\,t)\mid t \ge \widehat{W}(x)\right\}, \ \ \ \ \widehat{C}_2 = \left\{(x,\,0)\mid x\in\R\right\}.
\end{equation}
where $\widehat{W}(x)$ is given by 
\begin{equation*}
\widehat{W}(x) =
\begin{cases}
\psi(x)\coloneqq (x - 1)e^{x - 1} + e^{-1}, & \text{if $x \ge 0$;}\\
0, & \text{otherwise}.
\end{cases}
\end{equation*}
In this way, $\widehat{C}_1$ and $\widehat{C}_2$ intersect at $(0,0)$ and we have $\widehat{C}\coloneqq\widehat{C}_1\cap \widehat{C}_2 = \left\{(x,\,0)\mid x\le 0\right\}$, see also Figure~\ref{fig:rotation}.
We have $T(C_1) =\widehat C_1$ and $T(C_2) = \widehat C_2$, where $T:\R^2 \to \R^2$ is the affine 
map such that 
\begin{equation}\label{eq:affine_t}
T(x,t) \coloneqq (t+1, x+e^{-1}).
\end{equation}
Since $T$ is an affine isometry, the error bounds that hold between the pair $C_1, C_2$ and  the pair $\widehat{C}_1, \widehat{C}_2$ are the same.

\begin{figure}
	\begin{subfigure}[b]{0.45\textwidth}
		\centering
		\begin{tikzpicture}
		[scale=1]
		\node[anchor=south west,inner sep=0] (image) at (0,0) {\includegraphics[width=.7\linewidth]{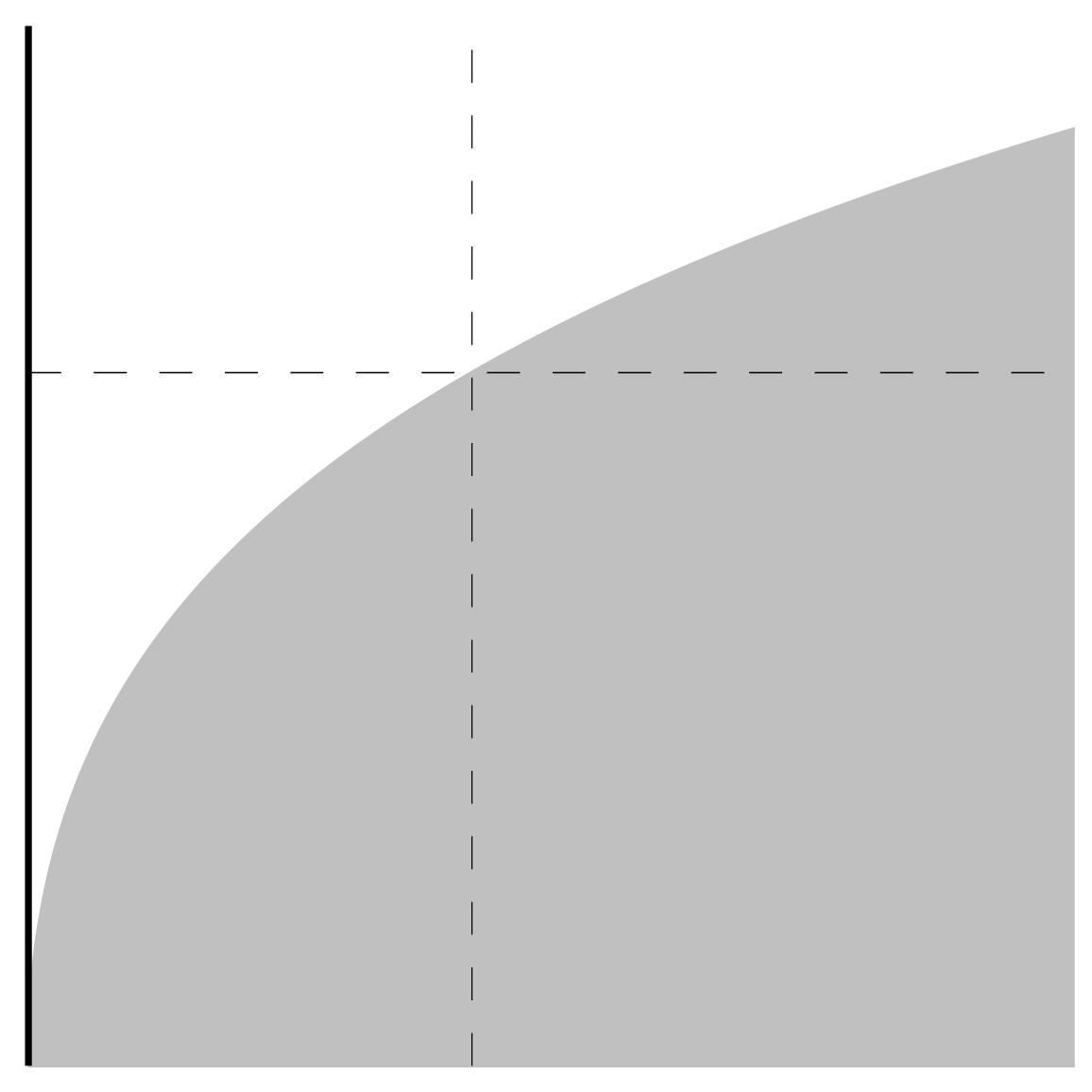}};
		\begin{scope}[x={(image.south east)},y={(image.north west)}]

		\node[black] at (0.08,0.9) {$C_2$};
		
		\node[black] at (0.7,0.4) {$C_1$};
		
		\node[black] at (0.40,0.8) {{\tiny$t$}};
		\node[black] at (0.6,0.63) {{\tiny$x$}};
		
		
		
		\end{scope}
		\end{tikzpicture}
	\end{subfigure}	
	\begin{subfigure}[b]{0.45\textwidth}
		\centering
		\begin{tikzpicture}
		[scale=1]
		\node[anchor=south west,inner sep=0] (image) at (0,0) {\includegraphics[width=.7\linewidth]{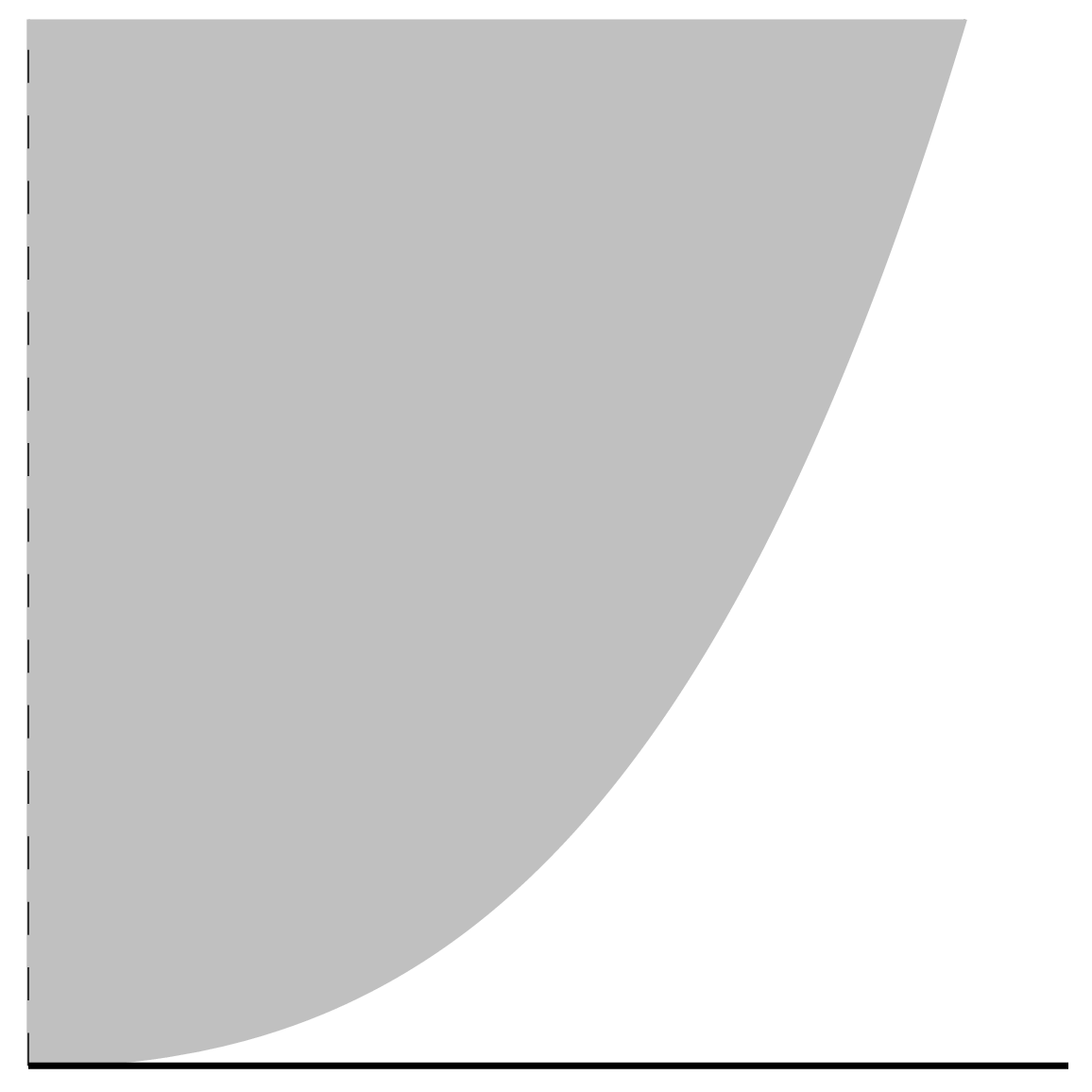}};
		\begin{scope}[x={(image.south east)},y={(image.north west)}]

		\node[black] at (0.3,0.5) {$\widehat{C}_1$};
		
		\node[black] at (0.7,0.08) {$\widehat{C}_2$};
		
		\node[black] at (0.0,0.8) {{\tiny$t$}};
		\node[black] at (0.5,0.0) {{\tiny$x$}};

		
		\end{scope}
		\end{tikzpicture}	
	\end{subfigure}		
	\caption{In the plot on the left, the nonlinear part of the boundary of $C_1$ is described by the graph of $W_0$, i.e., points of the form $(x, W_0(x))$. In the plot on the right, the sets are reflected and translated so that the nonlinear part of the boundary of $\widehat{C}_1$ is the described by $(x, \widehat{W}(x))$, where $\widehat{W}$ is $W_0^{-1}$ followed by a shift ensuring that $\widehat{W}(0) = 0$ holds.}\label{fig:rotation}
\end{figure}

Let $\psi^{-1}$ denote the inverse function of $\psi$. Then we have 
\begin{equation}
\psi^{-1}(t) = W_0(t - e^{-1}) + 1, \ \ \ t \ge 0.
\end{equation}
Note that $\psi^{-1}$ is increasing and concave, and satisfies $\psi^{-1}(0) = 0$. Similar to the inequality in \eqref{eq:ginv:2}, we have
\begin{equation}\label{ccv_prop}
\psi^{-1}(\tau x)\le \tau\psi^{-1}(x), \ \ \ \forall\, x\ge 0,\ \ \tau > 1.
\end{equation}
Moreover, since $\psi\in\RVz_2$, we see from \eqref{eq:rvz_inv} that $\psi^{-1}\in\RVz_{1/2}$.  In particular, 
$t/\psi^{-1}(t) \in \RVz_{1/2}$ by \eqref{eq:rvz_calc} and, therefore, we have
\begin{equation}\label{eq:t_psi_inv}
t = o\big(\psi^{-1}(t)\big) \text{ as } t\to0_+,
\end{equation}
by \eqref{lim_rv0_pos}.
Now we establish an error bound for $C_1$ and $C_2$, whose proof follows a similar line of argument of Theorem~\ref{Lip_examp}.
\begin{theorem}\label{theo:lambertw}
	Let $C_1$ and $C_2$ be defined by \eqref{lambert_exp}. Let $B_b \coloneqq \left\{(x,\,t) \mid \abs{x + e^{-1}} + \max\{t+1,\,0\}\leq b \right\}$. Then for small enough $b > 0$ there exists $\kappa > 0$ such that 
	\begin{equation}\label{theo:lambertw_eq}
	\dist\left(w,\,C\right) \le \kappa\psi^{-1}\Big(\max_{1\le i \le 2}\dist(w,\,C_i)\Big), \ \ \forall\,w\in B_b.
	\end{equation}
\end{theorem}

\begin{proof}
	Let $\widehat{C}_1$ and $\widehat{C}_2$ be defined as in \eqref{def_hat}.
	As discussed previously, since $\widehat{C}_1$ and $\widehat{C}_2$ are obtained from $C_1$ and $C_2$ through a reflection and translation, both pairs of set have the same error bounds, so we will analyze 
	$\widehat{C}_1$ and $\widehat{C}_2$ instead. 
	
	We recall that $\widehat{C}=\widehat{C}_1\cap \widehat{C}_2 = \left\{(x,\,0)\mid x\le 0\right\}$ and $P_{\widehat{C}}(w) = (\min\{x,\,0\},\,0)$ for all $w = (x,\,t)$.  Let $\widehat{B}_b \coloneqq \left\{(x,\,t) \mid \max\{x,\,0\} + \abs{t}\leq b \right\}$, so that $T(B_b) =  \widehat{B}_b $ holds where $T$ is the affine isometry in \eqref{eq:affine_t}.
	It suffices to show that  for small enough $b > 0$ there exists $\kappa > 0$ such that 
	\begin{equation}\label{eb_chat}
	\dist(w,\,\widehat{C}) \le \kappa\psi^{-1}\Big(\max_{1\le i \le 2}\dist(w,\,\widehat{C}_i)\Big), \ \ \forall\,w\in \widehat{B}_b.
	\end{equation}
	Note that for any $w = (x,\,t)\in \widehat{B}_b$, we have $P_{\widehat{C}_2}(w) = (x,\,0)$ and thus $\dist(w,\,\widehat{C}_2) = |t|$. Next, we prove \eqref{eb_chat} by considering two cases. 
	
	First, suppose that $x\le 0$.
	Then, we have $P_{\widehat{C}_1}(w) = (x,\,\max\{t,\,0\})$ and thus $\dist(w,\,\widehat{C}_1) = \max\{-t,\,0\} \le |t| = \dist(w,\,\widehat{C}_2)$. 
	This implies that $\max_{1\le i \le 2}\dist(w,\,\widehat{C}_i) = \abs{t}$.
	We also have $\dist(w,\,\widehat{C}) = |t|$.
	In view of \eqref{eq:t_psi_inv}, if $|t|$ is sufficiently small, we have $|t| \leq \psi^{-1}(\abs{t})$. 
	Therefore, for $b$ is sufficiently small, $|t|$ will also be small and \eqref{eb_chat} will be satisfied with any  $\kappa \geq 1$. 
	
	
	Next, we consider the case $x > 0$. Let $(\bar{x},\, \mu)$ denote the projection of $(x,\,0)$ onto $\widehat{C}_1$. Then we have
	$\mu = \psi(\bar{x})$ and  
	\begin{equation}\label{eq:case2_ineq}
	0 \le \bar{x}\le x\le b,
	\end{equation}
	see this footnote\footnote{  For $x > 0$, $(x,0) \not \in \widehat{C}_1$, so its projection  onto $\widehat C_1$ must be a boundary point of  $\widehat C_1$. The boundary of $\widehat C_1$ is $\{(y,\psi(y)) \mid y \geq 0\} \cup \{(y,0) \mid y \leq 0 \}$.
		If the projection were of the form $\big(\bar{x},\,0)$ with $\bar{x} < 0$, then $(0,0)$ would be a point of $\widehat{C}_1$ closer to $(x,0)$. 
		We conclude that the projection must be indeed of the form $(\bar{x},\, \psi(\bar{x}))$.
		Similarly, if it were the case that $x < \bar{x}$, we would be able to construct a point closer to $(x,0)$ by considering $(x, \psi(x))$ since $0 \leq \psi(x) \leq \psi(\bar{x})$ holds as $\psi$ is increasing. Finally, $x \leq b$ holds because $w =(x,t) \in \widehat B_b$ (by assumption) and $ x > 0$ holds.
	}.
	Since $\psi$ is convex and finite everywhere, its restriction to any bounded interval of $\R$ is Lipschitz continuous.  We let $L$ be the Lipschitz constant of $\psi$ restricted to interval $[0,\,b]$, and define $\widehat{L}\coloneqq\max\{L,\,1\}$, Therefore, we have
	\begin{equation*}
	\begin{split}
	\psi(x)  &  \le \psi(\bar{x}) + L|x - \bar{x}| \le \widehat{L}\left(\psi(\bar{x}) + |x - \bar{x}|\right) \le \sqrt{2}\widehat{L}\dist((x,\,0),\,\widehat{C}_1) \le  \sqrt{2}\widehat{L}\|(x,\,0) - P_{\widehat{C}_1}(w)\| \\
	& \le \sqrt{2}\widehat{L}\big(\|(x,\,0) - w\| + \|w - P_{\widehat{C}_1}(w)\|\big) = \sqrt{2}\widehat{L}\big(\dist(w,\,\widehat{C}_2) + \dist(w,\,\widehat{C}_1)\big).
	\end{split}
	\end{equation*}
	Note that $\psi$ is increasing on its domain and unbounded. 
	Applying $\psi^{-1}$ to both sides of the above inequality and recalling \eqref{ccv_prop}, we have
	\begin{equation}\label{eq:case2_psi_inv}
	x \leq 2\sqrt{2}\widehat{L}\psi^{-1}\Big(\max_{1\le i \le 2}\dist(w,\,\widehat{C}_i)\Big).
	\end{equation}
	Then, we have
	\begin{equation}\label{case2inq}
	\begin{split}
	\dist(w,\,\widehat{C}) & \le \|w - (x,\,0)\| + \dist((x,\,0),\,\widehat{C}) = \dist(w,\, \widehat{C}_2) + \dist((x,\,0),\,\widehat{C})\\
	& = \dist(w,\, \widehat{C}_2) + x  \\
	& \le \max_{1\le i \le 2}\dist(w,\,\widehat{C}_i)  + 2\sqrt{2}\widehat{L}\psi^{-1}\Big(\max_{1\le i \le 2}\dist(w,\,\widehat{C}_i)\Big),
	\end{split}
	\end{equation}
	where the last inequality follows from \eqref{ccv_prop}. Let $d(w) \coloneqq  \max_{1\le i \le 2}\dist(w,\,\widehat{C}_i)$.
	The inequalities in \eqref{eq:case2_ineq}, the Lipschitz continuity of $\psi$ and 
	$\dist(w, \widehat C_2) = |t|$ imply that 
	$d(w)$ is small when $b$ is small.
	Furthermore, recalling \eqref{eq:t_psi_inv}, we have that 
	$d(w) \leq \psi^{-1}(d(w))$ holds if $d(w)$ is sufficiently small. 
	In particular, for sufficiently small $b$, this inequality is valid. Therefore, \eqref{case2inq} implies that, for sufficiently small $b$, \eqref{eb_chat} holds for  $\kappa \geq (1+ 2\sqrt{2}\widehat{L})$.
\end{proof}

Although we will be more succinct this time, similar to the discussion following Theorem~\ref{Lip_examp}, we will analyze the behavior of the AP algorithm when applied to $C_1$ and $C_2$ as in \eqref{lambert_exp}.
We denote the sequence generated by the AP algorithm by $\{x^k\}$ and we let $b > 0 $ be sufficiently small so that Theorem~\ref{theo:lambertw} holds and let  $\kappa$ be such that \eqref{theo:lambertw_eq} holds. 

Denote the interior of $B_b$ by $\interior(B_b)$.
For every $b > 0$, $C_1 \cap C_2$  is contained $\interior(B_b)$. 
Since the sequence $\{x^k\}$  converges to some $\bar{x} \in C_1 \cap C_2$, we can find a ball $\bar{B}$ around $\bar{x}$ such that $\bar{B} \subseteq B_b$ and
$x^k$ belongs to $\bar{B}$ for sufficiently large $k$. 
Analogously to the discussion leading to \eqref{he_psi_b}, we 
conclude that if $B$ is a bounded set containing 
$\{x^k\}$, then
$C_1$ and $C_2$ are Karamata regular over $B$ and $\psi_B$ can be taken to be such that $\psi_B(t) = \kappa \psi^{-1}(t) = \kappa W_0(t-e^{-1}) + \kappa$ holds for sufficiently small $t > 0$.
In particular, we have $\psi_B \in \RVz_{1/2}$.

In view of \eqref{eq:hat_phi} and the calculus rules in \eqref{eq:rvz_calc}, we have $\phi \in \RVz_{1/2}$ as well. 
Then, item~$(ii)$ of Theorem~\ref{theo:rv_old}
leads to $\sqrt{\Phi_{\phi}^{-1}(k)} = o(k^{-r})$
for every $r \in (0,1)$.
That is, the convergence rate is at least as fast as $(1/k)^{\frac{r}{2}}$ for every $r \in (0,1)$.
For this example, we omit a more detailed analysis using Theorem~\ref{theorem_int}.

The set $C_1$ has a conic linear expression using two exponential cones as described in \cite[Section~5.2.10]{MC2020}. 
This is based on the observation that 
$(x,t) \in C_1$ if and only if there exists $u$ such that $x \geq u \ln(u)$ and $u \geq e^t$.
Therefore, analogously to the discussion following Proposition~\ref{prop:conic_rep}, the projection onto $C_1$ can be computed numerically by solving a conic linear program over exponential cones and second-order cones.

\subsection{A Douglas-Rachford splitting method example}
\label{sec:dr}

In this subsection, we will demonstrate an  example of convergence rate for the DR algorithm. Specifically, we consider the two sets $C_1$ and $C_2$ defined in \eqref{def_C1C2}, i.e., \begin{equation*}
C_1 \coloneqq \{(x,\,\mu) \mid \gamma(x) \leq \mu \}, \ \ \ C_2 \coloneqq \{(x,\,0)\mid x \in \R\},
\end{equation*}
and apply the DR algorithm to $C_1$ and $C_2$ as follows:
\begin{equation}\label{DR_w}
w^{k+1} = T_{\rm DR}(w^k) \coloneqq w^k + P_{C_1}\left(2P_{C_2}(w^k) - w^k\right) - P_{C_2}(w^k).
\end{equation}
Note that $C_1$ is not semialgebraic, so the convergence rate analysis in \cite{BLT17} is not applicable. However, we can obtain the convergence rate of $\{w^k\}$ by using the techniques developed in this paper.
Also, in view of Proposition~\ref{prop:conic_rep} and the following discusison, the operator $T_{\rm DR}$ can be computed numerically by solving a conic linear program over second-order cones and a exponential cone.

To proceed, we first compute $\F\,T_{\rm DR}$. Using the definition of $\F\,T_{\rm DR}$ and the closed-form expression of $P_{C_2}(\cdot)$, we have
\begin{equation*}
\begin{aligned}
 w \coloneqq (x,\,\mu) \in \F\,T_{\rm DR}  & \Longleftrightarrow \ P_{C_1}\left(2P_{C_2}(w) - w\right) = P_{C_2}(w)\\
& \Longleftrightarrow\ P_{C_1}\left((x,\,-\mu)\right)  = (x,\,0)\\
& \Longleftrightarrow\ x = 0,\, \mu \ge 0,
\end{aligned}
\end{equation*}
where we check the third equivalence as follows. Suppose that $P_{C_1}\left((x,\,-\mu)\right)  = (x,\,0) = P_{C_2}(w)$ holds. Then, $(x,\,0) \in C_1\cap C_2$, therefore $x = 0$. 
Now, $\gamma(0) = 0$, so if $\mu < 0$ holds, then $(x,\,-\mu )  = (0,\,-\mu) \in C_1$, which would lead to $P_{C_1}\left((x,\,-\mu)\right)  = (x,\,-\mu)$. This contradicts with $P_{C_1}\left((x,\,-\mu)\right)  = (x,\,0)$, so $\mu$ must be nonnegative.
Conversely, if $ x = 0$ and $\mu \geq 0$, then $P_{C_2}(w) = (0,\,0)$ and $P_{C_1}\left((0,\,-\mu)\right) = (0,\,0)$ holds because $\gamma$ is nonnegative.
%
This proves that
\begin{equation}\label{fix_dr}
\F\,T_{\rm DR} = \{(0,\,\mu) \mid \mu \ge 0\}.
\end{equation}
We will now discuss how to apply Theorem~\ref{theo:rate} to $T_{\rm DR}$.
The DR algorithm \eqref{DR_w} can be obtained from the quasi-cyclic iteration described in  \eqref{alg_iter} by  letting $m = 1$, $T_1 = T_{\rm DR}$ and $w_i^k \equiv 1$ (thus $\nu = 1$). With that, item (b) of Theorem~\ref{theo:rate} is satisfied with $s = 1$. 

In order to verify the assumption in item (a) of Theorem~\ref{theo:rate}, we 
need to show the Karamata regularity of $T_{\rm DR}$ over a bounded set $B$ containing $\{w^k\}$, namely, the existence of $\psi_B$ satisfying the last two items of Definition~\ref{def_jcrv} and the first item as below
\begin{equation}\label{phib_cond}
\dist\big(w,\,\F\,T_{\rm DR}\big) \le \psi_B\Big(\left\|T_{\rm DR}(w) -  w\right\|\Big), \ \ \forall\ w\in B.
\end{equation}
Once this is done and recalling that $T_{\rm DR}$ is $1/2$-averaged \cite[Lemma~4.1]{BLT17}, the function $\hat{\phi}$ in Theorem~\ref{theo:rate} is of the form 
\begin{equation}\label{hat_dr}
\hat{\phi}(u) = \psi_B^2(\sqrt{10u})
\end{equation}
and $\phi$ is obtained by restricting $\hat{\phi}$ to some interval of the form $(0,\,a]$. 

So the first step in order to obtain the convergence rate of the sequence $\{w^k\}$ is to compute $\psi_B$.   For this goal, we will first show the following theorem.  
In what follows, we consider the following notation for $r > 0$:
\begin{equation*}
\mathcal{B}_r \coloneqq \left\{w\in\R^2\mid \dist\left(w,\,\F\,T_{\rm DR}\right)\le r\right\}.
\end{equation*}

\begin{theorem}\label{ebf_dr}
Let $T_{\rm DR}$ be given in \eqref{DR_w}. There exist $r > 0$ and $\kappa > 0$ such that for all $w\in\mathcal{B}_r$,
\begin{equation}\label{dr_art}
\dist\big(w,\,\F\,T_{\rm DR}\big) \le \kappa \gamma^{-1}\Big(\left\|T_{\rm DR}(w) -  w\right\|\Big).
\end{equation}
\end{theorem}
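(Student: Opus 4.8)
The plan is to reduce the error bound \eqref{dr_art} for $T_{\rm DR}$ to the already-established H\"older-entropic-type error bound between $C_1$ and $C_2$ from Theorem~\ref{Lip_examp}. The key link is that, for $w=(x,\mu)$, the residual $\|T_{\rm DR}(w)-w\| = \|P_{C_1}(2P_{C_2}(w)-w) - P_{C_2}(w)\|$ should be comparable (up to constants and in the limiting regime $w\to \F\,T_{\rm DR}$) to $\max_i \dist(\tilde w,\, C_i)$ for an appropriate auxiliary point $\tilde w$ lying in a small ball around $(0,0)$, while $\dist(w,\,\F\,T_{\rm DR})$ is controlled by the distance of that auxiliary point to $C_1\cap C_2 = \{(0,0)\}$.

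First I would set up notation. Write $w=(x,\mu)$, $P_{C_2}(w)=(x,0)$, so $2P_{C_2}(w)-w = (x,-\mu)$, and let $p \coloneqq P_{C_1}((x,-\mu))$; then $T_{\rm DR}(w)-w = p - (x,0)$. Using \eqref{fix_dr}, $\dist(w,\,\F\,T_{\rm DR}) = \sqrt{x^2 + \max\{-\mu,0\}^2} \le \abs{x} + \abs{\min\{\mu,0\}}$. So it suffices to bound $\abs{x}$ and $\abs{\min\{\mu,0\}}$ by $\kappa\gamma^{-1}(\|T_{\rm DR}(w)-w\|)$. For the $\mu$-part: if $\mu \ge 0$ there is nothing to bound; if $\mu<0$, then $(x,-\mu)$ with $-\mu>0$ is ``above'' the graph near $x$, and a direct geometric estimate shows $\abs{x}$ and $\abs{\mu}$ cannot be too large without forcing $\|T_{\rm DR}(w)-w\| = \|p-(x,0)\|$ to be bounded below. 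The cleaner route is: note that $(x,0) = P_{C_2}(w)$ and $p\in C_1$, so $\dist((x,0),\,C_1) \le \|p-(x,0)\| = \|T_{\rm DR}(w)-w\|$, and also $\dist((x,0),\,C_2)=0$; hence $\max_{i}\dist((x,0),\,C_i) \le \|T_{\rm DR}(w)-w\|$. Applying Theorem~\ref{Lip_examp} to the point $\hat w \coloneqq (x,0)$ (which lies in $B_b$ once $w$ is close enough to $\F\,T_{\rm DR}$, since then $x$ is small) gives $\abs{x} = \dist((x,0),\,C) \le \kappa\,\gamma^{-1}(\max_i \dist((x,0),C_i)) \le \kappa\,\gamma^{-1}(\|T_{\rm DR}(w)-w\|)$, using that $\gamma^{-1}$ is nondecreasing. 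This handles the $x$-coordinate.

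For the $\mu$-coordinate in the case $\mu < 0$: I would argue that $\|T_{\rm DR}(w)-w\| = \|p-(x,0)\|$ is bounded below by a quantity of order $\abs{\mu}$ when $\abs{x}$ is small. Indeed, $p = P_{C_1}((x,-\mu))$ with $-\mu > 0$; projecting a point with a strictly positive second coordinate that lies below the epigraph $C_1$ onto $C_1$ moves it ``up'', and since $(x,0)$ has second coordinate $0 < $ second coordinate of $p$ is false — rather one shows the second coordinate of $p$ is at most $\gamma(\text{first coord of }p)$, hence small, so $\|p - (x,0)\|$ is comparable to $\abs{\mu}$ up to the contribution from $\gamma$, which is negligible near the origin. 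Combining with $\gamma^{-1}(t)\ge t$ near $0$ (since $\gamma(x)\le x$ near $0$; this follows from $\gamma^{-1}(y) = -\sqrt y \ln y$ being $\ge y$ for small $y$), we get $\abs{\mu} \lesssim \|T_{\rm DR}(w)-w\| \le \gamma^{-1}(\|T_{\rm DR}(w)-w\|)$ up to a constant. Adding the two coordinate bounds and shrinking $r$ so that $\mathcal B_r$ maps into the admissible neighbourhoods yields \eqref{dr_art}.

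The main obstacle I anticipate is the careful analysis of the projection $P_{C_1}((x,-\mu))$ when $\mu < 0$: one must rule out that the projection ``slides'' far along the graph (making $\|p-(x,0)\|$ small while $\abs{x}$ or $\abs{\mu}$ stays large), which requires using the Lipschitz continuity and convexity of $\gamma$ established in the proof of Theorem~\ref{Lip_examp} to control how far $P_{C_1}$ can move points horizontally. A clean way around much of this is to observe that $\|T_{\rm DR}(w)-w\|$ and $\dist(w,\,\F\,T_{\rm DR})$ are each comparable to $\dist((x,0),\,C_1) + \abs{\min\{\mu,0\}}$ near the fixed point set, reducing everything to the already-proven set error bound plus the elementary inequality $\gamma^{-1}(t) \ge t$ for small $t \ge 0$.
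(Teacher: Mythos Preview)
Your overall strategy is sound and partly cleaner than the paper's: reducing the $|x|$-bound to Theorem~\ref{Lip_examp} applied at the auxiliary point $(x,0)$ is a nice shortcut. Since $p\coloneqq P_{C_1}((x,-\mu))\in C_1$ and $\dist((x,0),C_2)=0$, you indeed get $\max_i\dist((x,0),C_i)\le \|p-(x,0)\|=\|T_{\rm DR}(w)-w\|$, and then $|x|=\dist((x,0),C)\le\kappa\,\gamma^{-1}(\|T_{\rm DR}(w)-w\|)$. The paper instead reproves this part from scratch via the Lipschitz constant of $\gamma$.

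The gap is in the $|\mu|$-bound when $\mu<0$. Your paragraph there is garbled: you implicitly assume $(x,-\mu)\notin C_1$ (``lies below the epigraph''), but with $-\mu>0$ it is perfectly possible that $(x,-\mu)\in C_1$, and you never make the case split. The paper's proof hinges precisely on distinguishing $(x,-\mu)\in C_1$ from $(x,-\mu)\notin C_1$. In the first case $p=(x,-\mu)$, so $\|T_{\rm DR}(w)-w\|=|\mu|$ and the bound is immediate via $t\le\gamma^{-1}(t)$. In the second case (with $\mu<0$) one has $|\mu|=-\mu<\gamma(x)$, and the paper controls $\gamma(x)$ by $\|p-(x,0)\|$ through the Lipschitz estimate $\gamma(x)\le\gamma(\bar x)+L|x-\bar x|$ where $p=(\bar x,\gamma(\bar x))$. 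Your proposed ``clean way around'' (comparability of both sides to $\dist((x,0),C_1)+|\min\{\mu,0\}|$) is itself a claim that needs exactly this case split and Lipschitz step to justify, so it does not bypass the obstacle you flagged.

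That said, with your $|x|$-bound already in hand, the second case becomes easier than you feared: since $\gamma\in\RVz_2$ we have $\gamma(t)\le t$ for small $t$, hence $|\mu|<\gamma(x)\le|x|\le\kappa\,\gamma^{-1}(\|T_{\rm DR}(w)-w\|)$, and no explicit projection analysis is needed. So your approach can be completed, but only after inserting the missing case distinction $(x,-\mu)\in C_1$ versus $(x,-\mu)\notin C_1$; without it the argument as written does not cover the easy but essential case where $(x,-\mu)\in C_1$ and the residual equals $|\mu|$ exactly. You should also note that the choice of $r$ must be small enough to keep $\|T_{\rm DR}(w)-w\|$ in the domain of $\gamma^{-1}$, which the paper handles via the constant $c_2$ in \eqref{eq:constants}.
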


\begin{proof}
For any $w = (x,\,\mu)$, we have from \eqref{fix_dr} that
\begin{equation}\label{lr_dr}
\begin{split}
\dist\big(w,\,\F\,T_{\rm DR}\big) & = \left\|(x,\,\mu) - \left(0,\,\max(\mu,\,0)\right)\right\| = \left\|(x, \,\min\left(\mu,\,0\right))\right\|, \\
\left\|T_{\rm DR}(w) -  w\right\| & = \left\|P_{C_1}\left(2P_{C_2}(w) - w\right) - P_{C_2}(w)\right\| = \left\|P_{C_1}\left((x,\,-\mu)\right) - (x,\,0) \right\|.
\end{split}
\end{equation}
Since $\lim_{t\rightarrow 0_+}\frac{t}{\gamma^{-1}(t)} = \lim_{t\to 0_+}\frac{t}{-\sqrt{t}\ln(t)}= 0$ and $\gamma^{-1}(0) = 0$, there exists some $c_1 > 0$ such that 
\begin{equation}\label{eq:tgamma_inv}
t \le \gamma^{-1}(t), \qquad \forall\ t \in[0,\,c_1].
\end{equation}
Due to the continuity of $\gamma$ and $\gamma(0) = 0$, there exists some $c_2 > 0$ such that 
\begin{equation}\label{eq:constants}
\sqrt{2}\widehat{L}\sqrt{4t^2 + \gamma^2(t)} \le \min\left(\gamma(0.5),\,c_1\right)
\end{equation} 
holds for all $t\in[0,\,c_2]$, where $\widehat{L} := \max(L,\,1)$ and $L$ is the Lipschitz constant of $\gamma$ on its domain, as shown in Theorem~\ref{Lip_examp}.
Let $r:= \min\{c_1,\,c_2,\,\gamma(0.5),\,0.5\}$.
For any  $w = (x,\,\mu)\in\mathcal{B}_r$, we consider the following two cases.
\begin{itemize}
\item[{\rm (a)}] $(x,\,-\mu)\in C_1$. In this case, $\gamma(x)\le -\mu$ holds. Therefore, we have $\mu\le 0$ and $\gamma(x)\le -\mu = |\mu|$. Furthermore, we note from \eqref{lr_dr} and $w\in\mathcal{B}_r$ that $\dist\big(w,\,\F\,T_{\rm DR}\big) = \left\|(x, \,\mu)\right\| \le r$. This  implies $|\mu|\le r\le \gamma(0.5)$ and $|\mu| \le r \le c_1$. Consequently,
 \begin{equation}\label{bdd_1}
\dist\big(w,\,\F\,T_{\rm DR}\big) \le |x| + |\mu|  \le \gamma^{-1}(|\mu|) + |\mu| \le 2\gamma^{-1}(|\mu|) = 2\gamma^{-1}\Big(\left\|T_{\rm DR}(w) -  w\right\|\Big),
\end{equation}
where the last inequality follows from \eqref{eq:tgamma_inv} and the equality follows from \eqref{lr_dr}.

\item[{\rm (b)}] $(x,\,-\mu)\notin C_1$. In this case,  we have $\gamma(x) > -\mu$. Note from \eqref{lr_dr} that  $|x| \le \dist\big(w,\,\F\,T_{\rm DR}\big)\le r\le 0.5$. Let $(\bar{x},\, \bar{\mu})$ denote the projection of $(x,\,-\mu)$ onto $C_1$. Since $\gamma$ is defined on $[-0.5,\,0.5]$, $|x|\le 0.5$, and $\gamma(a) < \gamma(b)$ whenever $|a| < |b|\le 0.5$, we then have $|\bar{x}|\le |x|\le 0.5$ and $\bar{\mu} = \gamma(\bar{x})$.\footnote{Suppose that $|\bar{x}| > |x|$. We then have $\bar{\mu} \ge \gamma(\bar{x}) > \gamma(x)$, which together with $\gamma(x) > -\mu$ implies that  $0 < \gamma(x) + \mu  = \|(x,\,-\mu) - (x,\,\gamma(x))\| < \mu + \bar{\mu}  < \|(x,\,-\mu) - (\bar{x},\,\bar{\mu})\|$. That means we can find $(x,\,\gamma(x))\in C_1$ which is closer to $(x,\,-\mu)$ than $(\bar{x},\,\bar{\mu})$. This leads to a contradiction, thus we conclude that $|\bar{x}| \le |x|$. Next we prove that $\bar{\mu} = \gamma(\bar{x})$. If this does not hold, we must have $\bar{\mu} > \gamma(\bar{x})$ since $(\bar{x},\,\bar{\mu})\in C_1$. Now, we consider two cases. If $-\mu\le\gamma(\bar{x})$, then $0\le \gamma(\bar{x}) + \mu < \bar{\mu} + \mu$. Thus, the point $(\bar{x},\,\gamma(\bar{x}))\in C_1$ satisfies $\|(x,\,-\mu) - (\bar{x},\,\gamma(\bar{x}))\| < \|(x,\,-\mu) - (\bar{x},\,\bar{\mu})\|$, which leads to a contradiction. If $-\mu > \gamma(\bar{x})$, since $-\mu <\gamma(x)$ and $\gamma$ is continuous with $\gamma(u)= \gamma(-u)$ for $u\in[-0.5,\,0.5]$, we can find some $\widehat{x}$ with the same sign with $x$ such that $\gamma(\widehat{x}) =  -\mu$. We then have $\gamma(\bar{x}) < \gamma(\widehat{x}) < \gamma(x)$, so $|\bar{x}| < |\widehat{x}| < |x|$. Thus, the point $(\widehat{x},\,\gamma(\widehat{x}))\in C_1$ satisfies $\|(x,\,-\mu) - (\widehat{x},\,\gamma(\widehat{x}))\| = |x - \widehat{x}| < |x - \bar{x}| \le \|(x,\,-\mu) - (\bar{x},\,\bar{\mu})\|$, which leads to a contradiction. This proves $\bar{\mu} = \gamma(\bar{x})$.}

We then obtain from the Lipschitz continuity of $\gamma$ on its domain that
\begin{equation}\label{Lipg}
\gamma(x) \le \gamma(\bar{x}) + L|x - \bar{x}| \le \widehat{L}\left(|x - \bar{x}| + \gamma(\bar{x})\right) \le \sqrt{2}\widehat{L}\left\|\left(x - \bar{x},\, \gamma(\bar{x})\right)\right\| .
\end{equation}
Moreover, we see from $|\bar{x}|\le |x| \le r \le c_2$ that
\begin{equation}\label{eq:l_gamma}
\sqrt{2}\widehat{L}\left\|\left(x - \bar{x},\, \gamma(\bar{x})\right)\right\| \le \sqrt{2}\widehat{L}\sqrt{4r^2 + \gamma^2(r)} \le \min\left(\gamma(0.5),\,c_1\right),
\end{equation}
where the last inequality follows from \eqref{eq:constants}.
This means that the right-hand-side of \eqref{Lipg} is in the domain of $\gamma^{-1}$.
Then, applying $\gamma^{-1}$ at both sides of \eqref{Lipg} and invoking \eqref{eq:ginv:2}, we obtain
\begin{equation}\label{bd_absx}
|x| \le \gamma^{-1}\left(\sqrt{2}\widehat{L}\left\|\left(x - \bar{x},\, \gamma(\bar{x})\right)\right\|\right) \le \sqrt{2}\widehat{L}\gamma^{-1}\left(\left\|(x - \bar{x},\,\gamma(\bar{x}))\right\|\right).
\end{equation}
Next, we consider two cases. If $\mu \ge 0$,  combining \eqref{lr_dr} and \eqref{bd_absx} we obtain
\begin{equation}\label{bdd_2}
\dist\big(w,\,\F\,T_{\rm DR}\big) =  |x| \le \sqrt{2}\widehat{L}\gamma^{-1}\left(\left\|(x - \bar{x},\,\gamma(\bar{x}))\right\|\right) = \sqrt{2}\widehat{L}\gamma^{-1}\left(\left\|T_{\rm DR}(w) -  w\right\|\right).
\end{equation}
Finally, suppose that $\mu < 0$. 
We  obtain from $\widehat{L}\ge 1$, \eqref{eq:tgamma_inv} and \eqref{eq:l_gamma} that
\begin{equation}\label{eq:aux}
\sqrt{2}\widehat{L}\left\|\left(x - \bar{x},\, \gamma(\bar{x})\right)\right\|\leq \sqrt{2}\widehat{L}\gamma^{-1}(\left\|\left(x - \bar{x},\, \gamma(\bar{x})\right)\right\|).
\end{equation}
We have from $(x,\,-\mu)\notin C_1$ that $|\mu| = -\mu < \gamma(x)$. In view of \eqref{Lipg} and \eqref{eq:aux}, we obtain
\begin{equation}\label{eq:mu}
|\mu| < \gamma(x) \leq \sqrt{2}\widehat{L}\left\|\left(x - \bar{x},\, \gamma(\bar{x})\right)\right\| \leq \sqrt{2}\widehat{L}\gamma^{-1}(\left\|\left(x - \bar{x},\, \gamma(\bar{x})\right)\right\|).
\end{equation}
Then, \eqref{lr_dr}, \eqref{bd_absx}, \eqref{eq:mu} imply that
\begin{equation}\label{bdd_3}
\begin{split}
\dist\big(w,\,\F\,T_{\rm DR}\big) & \le  |x| + |\mu| \\
& \le  2\sqrt{2}\widehat{L}\gamma^{-1}\left(\left\|(x - \bar{x},\,\gamma(\bar{x}))\right\|\right) = 2\sqrt{2}\widehat{L}\gamma^{-1}\left(\left\|T_{\rm DR}(w) -  w\right\|\right).
\end{split}
\end{equation}
\end{itemize}
Finally,  \eqref{bdd_1}, \eqref{bdd_2} and \eqref{bdd_3} taken together imply that \eqref{dr_art} holds with $\kappa = 2\sqrt{2}\widehat{L}$. This completes the proof.
\end{proof}

Next, we will use Theorem~\ref{ebf_dr} to obtain $\psi_B$ and further estimate the convergence rate of $\{w^k\}$ in \eqref{DR_w}. Let $B$ be a bounded set containing $\{w^k\}$ and $R$ be such that $B\subseteq\mathcal{B}_R$. Let $r > 0$ and $\kappa > 0$ be given as in Theorem~\ref{ebf_dr}. One can see that there exists some $c\in(0,\,e^{-2})$  such that $w\in\mathcal{B}_r$  whenever $w\in B$ and $\|T_{\rm DR}(w) - w\|\le c$ hold\footnote{Suppose that such a constant $c$ does not exists. Then we can construct a sequence $\{c_k\}$ with $c_k\to 0$, a sequence $\{u^k\}\subseteq B$ satisfying $\|T_{\rm DR}(u^k) - u^k\| \le c_k $ but $u^k\notin\mathcal{B}_r$. 
 Since $B$ is bounded, without loss of generality, we may assume that $u^k$ converges to some $u^*$. Now, $T_{\rm DR}$ is continuous, so $\|T_{\rm DR}(u^k) - u^k\| \le c_k $ and $c_k\to 0$ leads to $u^*\in\F\,T_{\rm DR}$. Now,  $\mathcal{B}_r$  contains a neighbourhood of  $\F\,T_{\rm DR}$ and, in particular, a neighbourhood of $u^*$, so $u^k \in \mathcal{B}_r$ for large $k$ which is a contradiction.}.
Let
\begin{equation}\label{eq:he_psi_b}
\psi_B(t) := \begin{cases}
0 & \text{if }\ t =0, \\
-\kappa\sqrt{t}\ln\,(t)  & \text{if }\ 0 < t \le c, \\
\max\left\{R,\, -\kappa\sqrt{c}\ln\,(c)\right\}  &  \text{if }\ t > c.
\end{cases}
\end{equation}
With that, $\psi_B$ satisfies items \ref{def_jcrv:2} and \ref{def_jcrv:3}  of Definition~\ref{def_jcrv}. Now we  check item \ref{def_jcrv:1} of Definition~\ref{def_jcrv}, i.e., condition \eqref{phib_cond}. Indeed, given any $w\in B$, we consider three cases: if $\|T_{\rm DR}(w) - w\| = 0$, then $w\in \F\,T_{\rm DR}$ and hence \eqref{phib_cond} holds, thanks to $\psi_B(0) = 0$; 
if $0< \|T_{\rm DR}(w) - w\|\le c$, then $w\in\mathcal{B}_r$, which together with \eqref{dr_art} and the second case in \eqref{eq:he_psi_b}  implies that \eqref{phib_cond} holds; if $\|T_{\rm DR}(w) - w\| > c$, we see from $w\in B\subseteq\mathcal{B}_R$ and the third case in \eqref{eq:he_psi_b}  that \eqref{phib_cond} holds.

Recalling \eqref{hat_dr}, we observe that when $t$ is small enough,
\begin{equation*}
\phi(t) =\hat{\phi}(t) = \psi_B^2(\sqrt{10t}) = \sqrt{10}\kappa^2\sqrt{t}(\ln(\sqrt{10t}))^2\overset{c}{\sim} \sqrt{t}(\ln(t))^2 \in\RVz_{1/2}.
\end{equation*}
So we are back to the same situation as in \eqref{eq:he_phi}. 
Following the same exact computations that culminate in \eqref{main_exp2}, we then obtain
\begin{equation}\label{exp_dr_art}
\sqrt{\Phi_{\phi}^{-1}(s)}\overset{c}{\sim}  \frac{\big[W_0({\sqrt{s}})\big]^2}{\sqrt{s}} \ \ {\rm as}\ \ s\to\infty.
\end{equation}
As a reminder, the set $C_1$ defined in \eqref{def_C1C2} is not semialgebraic and hence the convergence rate of the DR algorithm \eqref{DR_w} cannot be obtained from \cite[Proposition~4.1]{BLT17}, which requires all sets involved to be semialgebraic. 
To the best of our knowledge, the rate obtained in \eqref{exp_dr_art} is new for the DR algorithm.

\subsection{Recovering previous results}\label{sec:previous}
In this quick subsection we reap some extra fruits of the theory developed so far. 

\paragraph{Sublinear/linear rates under H\"older regularity.}
We briefly sketch how to recover the main convergence rate result in \cite{BLT17} as follows. 
For the iteration in \eqref{alg_iter} suppose that the $T_j$'s are bounded H\"older 
regular with uniform exponent $\gamma_{1,j} \in (0,\,1]$ and the intersection of the fixed point 
sets is nonempty and has a H\"olderian error bound of uniform exponent $\gamma _2 \in (0,\,1]$, 
see Remark~\ref{rmk_def} for a review of these notions.
Let $\gamma_1 \coloneqq \min_{1 \leq j \leq m} \gamma _{1,j} $, $\gamma \coloneqq \gamma_1 \gamma_2$ and 
suppose that we are under the assumptions of Theorem~\ref{theo:rate}.

The fact that the intersection of the fixed point sets has a H\"olderian error bound of uniform exponent $\gamma _2 \in (0,\,1]$ implies that they have a consistent error bound function of 
the form $\Phi(a,\,b) \coloneqq \sigma(b)a^{\gamma_2} $ for some some nondecreasing function $\sigma$, see \cite[Theorem~3.5]{LL20}.
Also, each $T_j$ is uniformly Karamata regular and, given a bounded set $B$, $T_j$ has a regularity function of the form $\Gamma_{B}^j(a) = \kappa _B a^{\gamma_{1,j}}$ for some $\kappa _B > 0$.
	
Then, applying Proposition~\ref{prop_psi}, we conclude that the $T_j$ are uniformly jointly Karamata regular  and the regularity function $\psi_B$ can be taken to be asymptotically equivalent (up to a constant) to the composition of 
$\sigma(b)a^{\gamma _2}$ with the sum of all the $\Gamma_{B}^j$'s. By the regular variation calculus rules 
we have $\psi_B \in \RVz_{\gamma}$ (see \eqref{eq:rvz_calc}) and  $\psi_B(t) \overset{c}{\sim} t^{\gamma}$ as $t \to 0_+$, since, intuitively, only the terms with smallest exponent matter as $t \to 0_+$.
Similarly, the $\phi$ in Theorem~\ref{theo:rate} also satisfies $\phi(t) \in \RVz_{\gamma}$ and 
$\phi(t)  \overset{c}{\sim} t^{\gamma}$ as $t \to 0_+$.

Next, we consider two cases.
First, if $\gamma \in (0,\,1)$, then applying item~(ii) of Theorem~\ref{theorem_int} and  Proposition~\ref{prop:asympt_equiv}, we conclude that  $\Phi_{\phi}^{-1}(s) \overset{c}\sim \frac{1}{g^{\leftarrow}\big((1/\gamma - 1)s\big)}$ as $s\to\infty$, where 
$g(t) \coloneqq \frac{1}{t\sqrt[\gamma]{1/t}} = t^{\frac{1-\gamma}{\gamma}}$. Now, 
$g$ is invertible over $(0,\,\infty)$, so we get $g^{\leftarrow}(s) = g^{-1}(s) = s^{\frac{\gamma}{1-\gamma}}$ for large $s$ by Proposition~\ref{lb_proposition}.
Overall, we obtain 
\begin{equation}\label{eq:holder_rate}
\sqrt{\Phi_{\phi}^{-1}(k)} \overset{c}{\sim} k^{-\frac{\gamma}{2(1-\gamma)}} \ \ {\rm as}\ \ k\to\infty.
\end{equation}

If $\gamma = 1$, then $\phi(t)  \overset{c}{\sim} t$ as $t \to 0_+$  so the first half of 
item~(iii) of Theorem~\ref{theorem_int} implies a linear convergence rate.
Both rates match what is given in \cite[Theorem~3.1]{BLT17}.

\paragraph{AP and logarithmic error bounds.}
We say  that convex sets $C_1,\ldots, C_m$ have a logarithmic error bound with exponent $\gamma$ if they admit a consistent error bound function $\Phi$ such that for every $b > 0$, there exist $\kappa_b > 0$ and $a_b > 0$ with $\Phi(a,\,b) = \kappa_b\left(-\frac{1}{{\rm ln}(a)}\right)^{\gamma}$ for $a\in(0,\,a_b)$,
see \cite[Definition~5.8]{LL20}. 

This corresponds to a very pathological kind of error bound that is worse than any H\"olderian error bound, see \cite[Example~5.9]{LL20} for a family of examples related to epigraph of the function mapping  a sufficiently small $x$ to $e^{-\frac{1}{\abs{x}^\beta}}$ for some $\beta \geq 2$. 
See also \cite[Example~4.10]{BDNP16} for a related discussion of the $\beta = 2$ case.
Another example of logarithmic error bound was found in \cite[Section~4.2.3]{LLP20} in the study of exponential cones, a highly expressive class of cones that can be used to model convex problems that require the exponential and logarithm functions, see \cite{CS17} and \cite[Chapter~5]{MC2020}.
In particular, a logarithmic error bound of exponent $1$ holds for the exponential cone and 
a certain subspace, 
see \cite[Section~6.2]{LL20} for more details. Other examples of logarithmic error bounds in the context of optimization over log-determinant cones can also be found in \cite{LLLP24}.

For simplicity, let $C_1$ and $C_2$ be two closed convex sets which satisfy a logarithmic consistent error bound $\Phi$ with exponent $\gamma >0$ and consider the method of alternating projections.
Let $B$ be a bounded set containing the iterates. Then the operators $P_{C_1}$ and $P_{C_2}$ are jointly Karamata regular over $B$, and the regularity function $\psi_B$ can be taken 
to be such that $\psi_B(u) = \Phi(u,\,b)$ for some large $b$ (see Remark~\ref{rmk_def}), which implies that 
$\psi_B(u) = \kappa\left(-\frac{1}{\ln(u)}\right)^{\gamma}$ for small $u$ and some constant $\kappa > 0$. 
Recalling  \eqref{eq:hat_phi} and the relation between $\phi$ and $\hat{\phi}$ in \eqref{def_pf}, we observe that when $u$ is small enough,
\begin{equation}\label{re_phi_log}
 \phi(u) = \psi_B^2(\sqrt{18u}) = \kappa^2\left(-\frac{1}{{\rm ln}(\sqrt{18u})}\right)^{2\gamma}\overset{c}{\sim} \left(-\frac{1}{\ln(u)}\right)^{2\gamma}\in\RVz_0.
\end{equation}
Note that $\phi$ is continuous and  increasing on $(0,\,a]$ for some small enough $a$. So we can restrict $\phi$ on $(0,\,a]$. Let $g(s):=\frac{1}{s\phi^{-}(1/s)}$ and $\widehat{g}(s):=sg(s)$ for $s\in[1/\delta,\,\infty)$. 
Then $\widehat{g}(s) = \frac{1}{\phi^{-}(1/s)} = \frac{1}{\phi^{-1}(1/s)} $ holds for large enough $s$. We see from this and the explicit expression of $\phi$ in \eqref{re_phi_log} 
that $\widehat{g}$ is continuous and increasing on $[M,\,\infty)$ for some large enough $M > 1/\delta$. Moreover, due to $\lim_{t\to 0_+}\phi(t) = 0$, we have $\widehat{g}(s) \to\infty$ as $s\to\infty$.
By Theorem~\ref{theorem_int}, we have that  $g$ is locally bounded and thus so is $\widehat{g}$.
Consequently, $\widehat{g}^{-1}(s) = \widehat{g}^{\leftarrow}(s)$ for large enough $s$, thanks to Proposition~\ref{lb_proposition}.
Now, for large enough $s$ we let $y_s:=\phi^{-1}(1/s)$. Then, $1/s =\phi(y_s) = \psi_B^2(\sqrt{18y_s})$, which implies that $1/\sqrt{s} = \psi_B(\sqrt{18y_s}) = \kappa\left(-\frac{1}{\ln(\sqrt{18y_s})}\right)^{\gamma}$. 
Solving this to obtain $\ln(y_s)$, we have
\[
\ln(y_s) = -\ln(18) -2\kappa^{1/\gamma} s^{1/(2\gamma)}
\]
for large $s$, which leads to
\begin{equation*}
\ln(g(s)) = \ln(\widehat g(s)) - \ln(s) = -\ln\left(\phi^{-1}(1/s)\right) - \ln(s) = -\ln(y_s) - \ln(s)\in\RV_{\frac{1}{2\gamma}}.
\end{equation*}
Applying Theorem~\ref{theorem_int}~(i) by letting $f = \phi$ and $\alpha = 1$, we have
\begin{equation}\label{main_exp1}
\sqrt{\Phi_{\phi}^{-1}(s)}\sim\sqrt{\frac{1}{\widehat{g}^{\leftarrow}(s)}}  = \sqrt{\frac{1}{\widehat{g}^{-1}(s)}}  = \sqrt{\phi(1/s)}  \overset{c}{\sim} \left(\frac{1}{\ln(s)}\right)^{\gamma} \ \ {\rm as}\ \ s\to\infty,
\end{equation}
where the last equivalence follows from \eqref{re_phi_log}. The rate in \eqref{main_exp1} matches what is given in \cite[Theorem~5.12]{LL20}.

\section{Definable operators and jointly  Karamata regular operators}
\label{sec: o-minimal}
In this section, we further explore the  class of jointly  Karamata regular operators  proposed in Definition~\ref{def_jcrv}. Our main goal is to show that, under mild assumptions, operators that are {definable} in some $o$-minimal structure are always jointly Karamata regular,  provided that their fixed points intersect. 

Next, we recall the definition of $o$-minimal structure, which is somewhat technical. 
That said, the reader may take heart from the fact that, besides some basic notions, the sole tool we 
need from this body of theory in order to derive the main result of this section is the so-called \emph{monotonicity lemma}. In view of this situation, we defer more detailed explanations to \cite[Section~4]{BDLS07}, \cite[pg.~452]{ABRS10} or to \cite{Dri98}. 
We define $o$-minimal structures and definable set/functions following \cite[Section~4]{BDLS07}.
\begin{definition}[$o$-minimal structure and definable sets/functions]\label{def:omin}
An $o$-minimal structure on $(\R,\, +,\,.)$	 is a sequence of Boolean algebras $\mathcal{O}_n$ of subsets of $\R^n$ such that for each $n$ we have
\begin{enumerate}[$(i)$]
	\item if $A$ belongs to $\mathcal{O}_n$ then $A \times \R$ and $\R \times A$ belong to $\mathcal{O}_{n+1}$;
	\item if $\pi : \R^{n+1} \to \R^n$ is the projection map such that $\pi(x_1,\ldots, x_{n+1}) = (x_1,\ldots, x_n)$,
	then $\pi(A) \in \mathcal{O}_n$ for all $A \in \mathcal{O}_{n+1}$;
	\item $\mathcal{O}_n$ contains the family of algebraic subsets of $\R^n$, i.e., every set of the form $\{x \in \R^n\mid p(x) = 0\}$ where $p: \R^n \to \R$ is a polynomial function;
	\item the elements of $\mathcal{O}_1$ are exactly the finite unions of intervals and points.
\end{enumerate}
A subset $A \subseteq \R^n$ is said to be \emph{definable} if $A \in \mathcal{O}_n$. A function $f:\R^n \to \R^m$ is said to be \emph{definable} if its graph $\{(y,\,z)\in \R^n\times \R^m\mid  z = f(y)\}$ belongs to $\mathcal{O}_{n+m}$. 
An extended-valued function $f : \R^n \to \R \cup \{+\infty\}$ is definable if its graph $\{(y,\,\alpha) \in \R^n\times \R\mid f(y) = \alpha\}$ is in $\mathcal{O}_{n+1}$.
\end{definition}


Key examples of $o$-minimal structures are given by the semialgebraic sets and the globally subanalytics sets. Here we recall that a set is semialgebraic if 
it can be written as a finite union of solution sets of polynomial equalities and polynomial inequalities. 
 In particular, the fact that the coordinate projection of a semialgebraic set is also a semialgebraic (item $(ii)$ of Definition~\ref{def:omin}) corresponds to the celebrated Tarski-Seidenberg theorem.

Many important optimization problems are defined over semialgebraic sets and this is useful in several ways.
For example, as a consequence of the so-called {\L}ojasiewicz inequality, the error bounds governing the intersection of convex semialgebraic sets can always be taken to be H\"olderian. Similarly, if the underlying operators describing an algorithm are semialgebraic, they must be bounded H\"older regular. For a proof, see, for example, \cite[Proposition~4.1]{BLT17}.
 
Unfortunately, the exponential function is not semialgebraic nor global subanalytic. Therefore, many problems that require exponentials and logarithms need to be defined over a larger $o$-minimal structure such as the \emph{log-exp structure}, which contains the aforementioned structures together with the graph of the exponential function. 

A troublesome aspect of the log-exp structure is that we can no longer expect that the underlying definable sets/functions have H\"older-like properties. Indeed, in the study of the error bounds appearing in problems defined over the exponential cone there is an example of an intersection having no H\"olderian error bounds, see \cite[Example~4.20]{LLP20}. A similar phenomenon occurs in the context of log-determinant cones \cite{LLLP24}.

In spite of this difficulty, in this section our goal is to show that continuous quasi-nonexpansive definable operators in any $o$-minimal structure are jointly  Karamata regular as in Definition~\ref{def_jcrv}.
This serves as a counterpart to the fact that H\"olderian behavior can no longer be expected over general $o$-minimal structures and also shows that theory developed in this paper applies quite generally.

%
%

We start with a natural extension of \cite[Proposition~3.3]{LL20}, which essentially says that there is always some function that describes the joint level of regularity of some given operators. Its proof follows the same line of arguments as \cite[Proposition~3.3]{LL20}, but we show the details here for the sake of self-containment. 

\begin{lemma}\label{lem:op}
	Let $L_i: \E\rightarrow\E$ $(i = 1,\ldots,n)$ be continuous operators such that $\bigcap_{i=1}^n\F\,L_i\neq\emptyset$. For  all $a,\, b\ge 0$, let $\Omega_{a,b}:= \Big\{y\in\E\mid \max\limits_{1\le i\le n}\norm{y- L_i(y)} \le a,\,\|y\| \le b\Big\}$. Define
	\begin{equation}\label{def_best_fun_t}
	\Phi(a,\,b):= 
	\begin{cases}
	\max\limits_{y\in\Omega_{a,b}}\,\dist\left(y,\, \bigcap_{i=1}^n\F\,L_i\right) & if\ \Omega_{a,b}\neq\emptyset;\\
	0 & otherwise.
	\end{cases}
	\end{equation}
Then $\Phi$ satisfies the following conditions:
\begin{enumerate}[$(i)$]
	\item[{\rm (i)}] for any $b\ge 0$, function $\Phi(\cdot,\,b)$ is  nondecreasing, $\lim\limits_{a\to 0+}\Phi(a,\,b) = 0$ and $\Phi(0,\,b) = 0$;	
	\item[{\rm (ii)}] for any  $a\ge 0$, function $\Phi(a,\,\cdot)$ is  nondecreasing;
	
	\item[{\rm (iii)}]  for any $x\in\E$, we have $\dist\left(x,\, \bigcap_{i=1}^n\F\,L_i\right) \le \Phi\Big(\max\limits_{1\le i\le n}\norm{x-L_i(x)},\, \|x\|\Big)$.
\end{enumerate}
\end{lemma}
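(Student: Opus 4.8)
The plan is to establish the three properties essentially by unwinding the definition of $\Phi$ in \eqref{def_best_fun_t}, using continuity of the $L_i$ to control the set $\Omega_{a,b}$, and then to handle the limit statement in item~(i) by a compactness-plus-contradiction argument. Throughout, write $C \coloneqq \bigcap_{i=1}^n \F\,L_i$, which is nonempty by hypothesis, and note that each $\F\,L_i = \{y \mid L_i(y) = y\}$ is closed because $L_i$ is continuous, so $C$ is closed and $\dist(\cdot,\,C)$ is well-defined and continuous.

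First I would dispatch the monotonicity claims, items~(ii) and the monotonicity half of~(i), together. The key observation is that $a_1 \le a_2$ and $b_1 \le b_2$ imply $\Omega_{a_1,b_1} \subseteq \Omega_{a_2,b_2}$ directly from the definition of $\Omega_{a,b}$. If $\Omega_{a_1,b_1} = \emptyset$ then $\Phi(a_1,b_1) = 0 \le \Phi(a_2,b_2)$ since $\Phi$ is always nonnegative. If $\Omega_{a_1,b_1} \neq \emptyset$, then it is a nonempty closed bounded (hence compact) subset of $\E$ — closed because $y \mapsto \max_i \norm{y - L_i(y)}$ and $y \mapsto \norm{y}$ are continuous, bounded by the constraint $\norm{y} \le b_1$ — so the maximum in \eqref{def_best_fun_t} is attained and $\Phi(a_1,b_1) = \max_{y \in \Omega_{a_1,b_1}} \dist(y,\,C) \le \max_{y \in \Omega_{a_2,b_2}} \dist(y,\,C) = \Phi(a_2,b_2)$, using $\Omega_{a_1,b_1}\subseteq\Omega_{a_2,b_2}$ and that $\Omega_{a_2,b_2}$ is also compact and nonempty. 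This gives monotonicity of $\Phi(\cdot,b)$ and of $\Phi(a,\cdot)$ at once. Also $\Phi(0,b) = 0$ follows because $\Omega_{0,b} = \{y \mid L_i(y) = y\ \forall i,\ \norm{y}\le b\} \subseteq C$, so every $y \in \Omega_{0,b}$ has $\dist(y,\,C) = 0$; and $\Omega_{0,b}$ could be empty, in which case $\Phi(0,b) = 0$ by definition anyway.

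Next I would prove item~(iii), which is the easiest: given $x \in \E$, set $a \coloneqq \max_i \norm{x - L_i(x)}$ and $b \coloneqq \norm{x}$. Then $x \in \Omega_{a,b}$, so $\Omega_{a,b} \neq \emptyset$ and by definition $\Phi(a,b) = \max_{y \in \Omega_{a,b}} \dist(y,\,C) \ge \dist(x,\,C)$, which is exactly the claimed inequality. Then the remaining and only substantive part is the limit $\lim_{a \to 0_+} \Phi(a,b) = 0$ in item~(i). I would argue by contradiction: since $\Phi(\cdot,b)$ is nonnegative and nondecreasing, $\lim_{a\to 0_+}\Phi(a,b)$ exists; suppose it equals some $\varepsilon > 0$. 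Then for every $k$ there is $a_k \coloneqq 1/k$ with $\Phi(a_k,b) \ge \varepsilon$, so $\Omega_{a_k,b} \neq \emptyset$ and there exists $y^k \in \Omega_{a_k,b}$ with $\dist(y^k,\,C) \ge \varepsilon/2$ (say), while $\max_i \norm{y^k - L_i(y^k)} \le 1/k$ and $\norm{y^k} \le b$. By boundedness, pass to a subsequence $y^{k_j} \to y^*$; continuity of each $L_i$ gives $\max_i \norm{y^* - L_i(y^*)} = \lim_j \max_i \norm{y^{k_j} - L_i(y^{k_j})} = 0$, hence $y^* \in \F\,L_i$ for all $i$, i.e.\ $y^* \in C$, so $\dist(y^*,\,C) = 0$. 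But continuity of $\dist(\cdot,\,C)$ forces $\dist(y^{k_j},\,C) \to 0$, contradicting $\dist(y^{k_j},\,C) \ge \varepsilon/2$. Therefore $\lim_{a\to 0_+}\Phi(a,b) = 0$.

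The main obstacle, such as it is, is purely a matter of care rather than depth: making sure the compactness argument for the limit is set up correctly — in particular handling the case where $\Omega_{a,b}$ is empty for small $a$ (in which case $\Phi(a,b)=0$ trivially and there is nothing to prove), and being careful that the subsequential limit $y^*$ indeed lies in $C$ and not merely near it, which is where continuity of the $L_i$ (equivalently, closedness of each $\F\,L_i$) is essential. This is the same mechanism as in \cite[Proposition~3.3]{LL20}; the only new feature is passing from a single operator to the intersection, which is handled uniformly by the $\max_{1\le i\le n}$ appearing in the definition of $\Omega_{a,b}$.
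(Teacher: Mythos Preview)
Your proof is correct and follows essentially the same approach as the paper: the monotonicity in items~(i)--(ii), the value $\Phi(0,b)=0$, and item~(iii) are read off directly from the definition of $\Omega_{a,b}$ and $\Phi$, while the limit $\lim_{a\to 0_+}\Phi(a,b)=0$ is obtained by the same compactness-plus-contradiction argument (extract a bounded sequence of near-maximizers, pass to a subsequence, use continuity of the $L_i$ to force the limit into $C$, and contradict the assumed positive lower bound on the distance). The only cosmetic difference is that the paper takes $y^k$ to be an exact maximizer over the compact $\Omega_{a_k,b}$ rather than an $\varepsilon/2$-approximate one, but this is immaterial.
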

\begin{proof}
Except for the continuity requirement in item~(i), all the other properties of $\Phi$ in items~(i), (ii), (iii) follow directly from the definition of $\Phi$. We only need to show that $\lim_{a\to 0+}\Phi(a,\,b) = 0$ holds for every $b \geq 0$. 

Suppose on the contrary that there exist some $\delta > 0$ and a sequence $\{a_k\} \subseteq [0,\infty)$ converging to $0$ such that $\Phi(a_k,\, b) \ge \delta$ holds for every $k$. We then see from \eqref{def_best_fun_t} that $\Omega _{ a_k,b} \neq \emptyset$. By the continuity of each $L_i$, we have that $\Omega _{ a_k,b}$ is compact. Since the distance function to $\bigcap_{i=1}^n\F\,L_i$ is continuous, for each $a_k$, there exists $y^k \in \Omega _{a_k,b}$ such that 
\begin{equation*}
\dist\Big(y^k,\, \bigcap_{i=1}^n\F\,L_i\Big) = \Phi(a_k,\,b) \ge \delta.
\end{equation*}
Note that $\{y^k\}$ is bounded. Then there exists a subsequence $\{y^{k_j}\}$ which converges to some limit $\bar{y}$. Since $a_{k} \to 0$, we see from $\max\limits_{1\le i\le n}\norm{y^k- L_i(y^k)} \le a_k$ that $\bar{y}\in \bigcap_{i=1}^n\F\,L_i$. Consequently,
 $\dist(y^{k_j},\, \bigcap_{i=1}^n\F\,L_i) \to \dist(\bar{y},\, \bigcap_{i=1}^n\F\,L_i) = 0$, which contradicts the fact that $\dist(y^{k_j},\, \bigcap_{i=1}^n\F\,L_i) = \Phi(a_{k_j},\,b) \geq \delta $ should hold for every $j$. This proves the continuity requirement in item~(i) and completes the proof.
\end{proof}

Next, we state formally the monotonicity lemma, which, in particular, ensures that a real definable function defined over an interval cannot oscillate infinitely over its domain. 
\begin{lemma}[Monotonicity lemma, {\cite[Chapter~3]{Dri98}}]\label{lem:mon}
Let $f:(a,\,b) \to \R$ be a definable function in some $o$-minimal structure. Then, there exists a finite subdivision $a = a_0 < a_1 < \ldots < a_k = b$ such that on each subinterval $(a_{i},\,a_{i+1})$, $f$ is continuous and either constant or strictly monotone (i.e., increasing or decreasing).
\end{lemma}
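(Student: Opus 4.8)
The statement is a classical fact of o-minimal geometry, so in the paper it suffices to invoke \cite[Chapter~3]{Dri98}. For the record, here is the route I would follow to prove it from the axioms in Definition~\ref{def:omin}.

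The plan is to first establish a \emph{local} form: for every subinterval $I\subseteq(a,\,b)$ there is a subinterval $J\subseteq I$ on which $f$ is continuous and either constant or strictly monotone. Granting this, the global conclusion follows by a bootstrap that uses only axiom~$(iv)$ (definable subsets of $\R$ are finite unions of points and intervals). Let $U$ be the set of points of $(a,\,b)$ having an open neighbourhood on which $f$ is continuous and, respectively, constant, strictly increasing, or strictly decreasing; each of these three conditions is a first-order property, so $U$ is definable. The complement $(a,\,b)\setminus U$ cannot contain an interval $I$, since the local form applied to $I$ would yield $J\subseteq I$ with $J\subseteq U$; hence $(a,\,b)\setminus U$ is finite. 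Cutting $(a,\,b)$ at these points, on each resulting open piece the three definable sets ``locally constant'', ``locally strictly increasing'', ``locally strictly decreasing'' partition the piece and are again finite unions of intervals and points; cutting further at their endpoints gives the desired subdivision, because on a connected interval local continuity is continuity, and being locally constant (resp.~locally strictly monotone) throughout forces being globally constant (resp.~globally strictly monotone).

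For the local form I would argue in two stages. \emph{Continuity.} First, the set of $x\in I$ near which $f$ is bounded is cofinite in $I$: otherwise it omits an interval $I'$, and $I'=\bigcup_{n\in\N}\bigl(I'\cap f^{-1}([-n,\,n])\bigr)$ displays $I'$ as a countable union of definable sets none of which can contain a subinterval (such a subinterval would be a neighbourhood of boundedness), hence each is finite by axiom~$(iv)$, so $I'$ is countable — impossible. Shrinking and using compactness of a closed subinterval, we may assume $f$ is bounded on $I$; then $f^{+}(x):=\limsup_{y\to x}f(y)$ and $f^{-}(x):=\liminf_{y\to x}f(y)$ are definable, $f$ is discontinuous at $x$ exactly on the definable set $\{x: f^{-}(x)\neq f^{+}(x)\}\cup\{x: f^{-}(x)=f^{+}(x)\neq f(x)\}$, and one shows this set contains no interval, hence is finite; restricting $J$ to avoid it makes $f$ continuous. \emph{Monotonicity.} On an interval where $f$ is continuous, either $f$ is constant on a subinterval (done), or $f$ is non-constant on every subinterval, in which case every fibre $f^{-1}(f(x))$ has empty interior, hence is finite; a short o-minimality argument then yields a subinterval on which $f$ is injective, and a continuous injective function on an interval is strictly monotone by the intermediate value theorem. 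Feeding this into the bootstrap above completes the argument.

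The delicate point — and the one I expect to be the real obstacle — is the claim that a bounded definable function on an interval is continuous off a finite set, i.e.\ that $\{f^{-}<f^{+}\}$ contains no interval. One cannot simply quote ``definable functions have continuity points'' here, as that is essentially what is being proved; the honest argument assumes the jump set contains an interval, notes that the positive definable function $f^{+}-f^{-}$ is bounded below by some $c>0$ on a subinterval (axiom~$(iv)$ once more), and then derives a contradiction by a selection/pigeonhole argument on the graph of $f$ — or, as in \cite{Dri98}, by first setting up a rudimentary cell decomposition. Everything else is bookkeeping with the one-dimensional o-minimality axiom.
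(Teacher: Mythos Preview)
The paper does not prove this lemma at all; it simply states it and cites \cite[Chapter~3]{Dri98}. You correctly identify this in your first sentence, so your proposal is already aligned with what the paper does. The remainder of your write-up is a bonus sketch of the standard o-minimal proof (local form via boundedness/continuity/injectivity, then a global bootstrap using axiom~$(iv)$), which is indeed the route taken in the cited reference; it is not needed here, but nothing in it is wrong as a roadmap.
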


Before we move on to the main theorem of this section, we recall some basic properties of definable sets and functions, see \cite[Section~2.1]{DM96}, \cite[Section~1.3]{Coste00} and \cite{Io09} for more details. 
In summary, the class of definable functions/sets is remarkably stable: if $f,\,g$ are definable functions in some $o$-minimal structure, then whenever the functions $f \pm g$, $f\circ g$, $f^{-1}$, $f/g$ are well-defined, they must be definable over the same $o$-minimal structure. In addition, finite unions and intersections of definable sets are definable as well.
Naturally, inverse images of definable sets through definable functions are definable as well.

A very powerful technique to show that a given set $A$ is definable is to express $A$ as a solution of a ``first-order formula quantified over definable sets and functions''. A detailed proof is given in \cite[Theorem~1.13]{Coste00}, but this principle is referenced throughout the literature, e.g., \cite[Appendix~A]{DM96} and \cite[Section~2]{Io09}. A simple application  is that if $A_i$ are definable sets, $Q_i \in \{\exists,\, \forall \}$ are quantifiers, $\Delta \in \{<,\,\leq,\, =,\, \neq \}$ and $f$ is a definable function  then the set of $x$ satisfying 
\begin{equation}\label{eq:1storder}
Q_1 y_1 \in A_1, \ Q_2 y_2 \in A_2, \ldots, Q_m y_m \in A_m, \qquad f(x,\,y_1,\,\ldots, y_m) \Delta 0 \text{ holds.}
\end{equation}
is definable.
This principle gives, for instance, an easy proof of the fact that the set $C_f$ of continuity points of a definable function $f$ is definable as well.
After all, $C_f$ is exactly the set of solutions of the formula ``$\forall\, \epsilon > 0,\, \exists\, \delta > 0, \,\forall\, y \in \{z\mid \norm{x-z} \leq \delta \}, \qquad \norm{f(x)-f(y)} - \epsilon \leq 0$ holds''. 
This principle also leads to an easy proof of the following well-known lemma regarding partial minimization.
\begin{lemma}[Partial minimization preserves definability]\label{lem:partial_min}
Let $A \subset \R^n$ be a definable set and let $f : \R^n \times \R^m \to \R \cup \{+\infty\}$ be a definable function, all over the same $o$-minimal structure. Then, the function $\varphi:\R^n \to \R \cup \{+\infty\}$
given by $\varphi(x) = \inf_{y\in A} f(x,\,y)$ is definable.
\end{lemma}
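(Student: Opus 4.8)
The plan is to express the epigraph of $\varphi$ as the solution set of a first-order formula quantified over definable sets and functions, and then invoke the general principle (described just above the statement, cf.\ \eqref{eq:1storder}) that such sets are definable. This reduces the lemma to a routine but careful bookkeeping of quantifiers.

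First I would recall that a function into $\R \cup \{+\infty\}$ is definable precisely when its graph is definable, and that it suffices instead to show the \emph{epigraph} $\operatorname{epi}\varphi := \{(x,\,\alpha) \in \R^n \times \R \mid \varphi(x) \leq \alpha\}$ is definable; indeed, once the epigraph is known to be definable, the strict epigraph $\{(x,\,\alpha) \mid \varphi(x) < \alpha\}$ is obtained by the formula ``$\exists\, \beta,\ \beta < \alpha$ and $(x,\,\beta) \in \operatorname{epi}\varphi$'', hence definable, and the graph is the set-difference $\operatorname{epi}\varphi \setminus \{(x,\,\alpha) \mid \varphi(x) < \alpha\}$ together with the handling of the value $+\infty$, which occurs exactly on the definable set $\{x \mid \forall\, \alpha,\ (x,\,\alpha) \notin \operatorname{epi}\varphi\}$. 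So the core task is the epigraph.

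Next I would write down the defining formula for $\operatorname{epi}\varphi$. By definition of infimum, $\varphi(x) = \inf_{y \in A} f(x,\,y) \leq \alpha$ holds if and only if for every $\epsilon > 0$ there exists $y \in A$ with $f(x,\,y) \leq \alpha + \epsilon$ (and, separately, $\varphi(x) = +\infty$ iff $A$ is empty or $f(x,\,\cdot) \equiv +\infty$ on $A$, which is itself a first-order condition). Thus
\[
(x,\,\alpha) \in \operatorname{epi}\varphi \quad \Longleftrightarrow \quad \forall\, \epsilon \in (0,\,\infty),\ \exists\, y \in A,\ \ f(x,\,y) - \alpha - \epsilon \leq 0 .
\]
Here $A$ is definable by hypothesis, $(0,\,\infty)$ is definable (item~$(iv)$ of Definition~\ref{def:omin}), and the map $(x,\,\alpha,\,\epsilon,\,y) \mapsto f(x,\,y) - \alpha - \epsilon$ is definable since $f$ is definable and addition/subtraction preserve definability. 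Therefore the right-hand side is exactly a formula of the shape \eqref{eq:1storder} (with quantifiers $Q_1 = \forall$ ranging over the definable set $(0,\infty)$ and $Q_2 = \exists$ ranging over $A$, and $\Delta$ taken to be $\leq$), so its solution set is definable. One subtlety to handle explicitly is the extended-real value $+\infty$ of $f$: since $f : \R^n \times \R^m \to \R \cup \{+\infty\}$ is definable as an extended-valued function, the inequality $f(x,\,y) \leq \alpha + \epsilon$ should be read so that it fails whenever $f(x,\,y) = +\infty$; this is again first-order expressible using the definable set $\{(x,\,y) \mid f(x,\,y) < +\infty\} = \operatorname{dom} f$ and the restriction of $f$ to it, which is definable.

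The only real obstacle here is purely expository rather than mathematical: one must be scrupulous about the $+\infty$ value so that the quantifier manipulation of $\inf$ as ``$\forall \epsilon\, \exists y$'' remains valid when $f(x,\,\cdot)$ takes the value $+\infty$ on part of $A$, and one must be careful that the quantifier ranges ($(0,\infty)$ and $A$) are themselves definable so that the cited principle genuinely applies. Once these points are dispatched, the lemma follows immediately from the first-order definability principle, exactly as in the continuity-points example preceding the statement.
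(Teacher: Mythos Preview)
Your proposal is correct and rests on the same core idea as the paper: express the relevant set as the solution of a first-order formula over definable data and invoke \eqref{eq:1storder}. The only difference is that the paper characterizes the \emph{graph} of $\varphi$ directly, as the intersection of the solution sets of the two formulae
\[
\forall\, y \in A,\ f(x,\,y) \geq \alpha \qquad \text{and} \qquad \forall\, \epsilon > 0,\ \exists\, y \in A,\ f(x,\,y) < \alpha + \epsilon,
\]
whereas you first capture the epigraph via (essentially) the second formula alone and then back out the graph by a further first-order manipulation. Both routes are valid; the paper's is one step shorter and avoids the epigraph-to-graph bookkeeping you flag.
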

\begin{proof}
The graph of $\varphi$ is the intersection of the sets of solutions $(x,\,\alpha)$ of two first-order formulae:
\begin{equation*}
\forall\, y \in A,\, f(x,\,y) \geq \alpha\quad \text{ and }\quad \forall\, \epsilon > 0,\, \exists\, y \in A,\,  f(x,\,y) < \alpha + \epsilon.
\end{equation*}
Due  to \eqref{eq:1storder},  these two sets are definable. Consequently, the graph of $\varphi$ is definable and $\varphi$ is definable. This completes the proof.
\end{proof}

We move on to the main result of this section and we recall that an operator $L:\E\to \E$ is said to be \emph{quasi-nonexpansive} if $\norm{Lx-y} \leq \norm{x-y}$ holds for every $x \in \E$ and $y \in \F\,L$.

\begin{theorem}\label{theo:cvo}
	Let $L_i: \E\rightarrow\E$ $(i = 1,\ldots,n)$ be continuous operators such that $\bigcap_{i=1}^n\F\,L_i\neq\emptyset$.   If all the $L_i$ are quasi-nonexpansive and definable in some $o$-minimal structure, then $L_1,\ldots,L_n$ are jointly Karamata regular  (\emph{resp.} Karamata regular when $n = 1$) over any bounded set  $B\subseteq\E$.
	In particular,  the corresponding regularity function $\psi_B$ can be taken to be $\Phi(\cdot,\,r)$ defined in Lemma~\ref{lem:op} with sufficiently large $r$. Furthermore, $\Phi(\cdot,\, r) \in \RVz_{\rho}$ with index $\rho \in [0,\,1]$ when restricted to $(0,\,1]$ ($\rho$ may depend on $r$).
		
\end{theorem}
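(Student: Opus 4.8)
The plan is to apply Lemma~\ref{lem:op} to the operators $L_i$ in order to obtain the two-parameter function $\Phi$, and then to verify that $\psi_B(\cdot)\coloneqq\Phi(\cdot,\,r)$ satisfies the three conditions of Definition~\ref{def_jcrv} for any bounded set $B$ whenever $r$ is large enough that $B\subseteq\B_r$. First I would fix a bounded set $B$ and pick $r>0$ with $B\subseteq\B_r$. Items~\ref{def_jcrv:1} and \ref{def_jcrv:2} of Definition~\ref{def_jcrv} then follow almost immediately: item~(iii) of Lemma~\ref{lem:op} gives, for $x\in B$, that $\dist(x,\,C)\le\Phi(\max_i\norm{x-L_i(x)},\,\norm{x})\le\Phi(\max_i\norm{x-L_i(x)},\,r)=\psi_B(\max_i\norm{x-L_i(x)})$, where the middle inequality uses $\norm{x}\le r$ together with the monotonicity of $\Phi$ in its second argument (Lemma~\ref{lem:op}(ii)) and the fact that $\F\,L_i=\F\,L_i$ since each $L_i$ is quasi-nonexpansive and continuous hence has a closed fixed-point set. (Here $C\coloneqq\bigcap_{i=1}^n\F\,L_i$, which is nonempty by hypothesis.) Item~\ref{def_jcrv:2}, i.e.\ that $\psi_B$ is nondecreasing with $\lim_{t\to0_+}\psi_B(t)=\psi_B(0)=0$, is exactly Lemma~\ref{lem:op}(i) specialized to the second argument $r$.

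The substantive part is item~\ref{def_jcrv:3}: showing that $\Phi(\cdot,\,r)$, restricted to some interval $(0,\,a]$, lies in $\RVz_\rho$ for some $\rho\in[0,\,1]$. The key observation is that $\Phi(\cdot,\,r)$ is a \emph{definable} function of one real variable. To see this I would express the graph of $a\mapsto\Phi(a,\,r)$ via a first-order formula quantified over definable sets and functions, in the spirit of \eqref{eq:1storder} and Lemma~\ref{lem:partial_min}: the set $\Omega_{a,r}=\{y\mid\max_i\norm{y-L_i(y)}\le a,\ \norm{y}\le r\}$ is definable (it is cut out by definable inequalities involving the definable operators $L_i$), the distance function $y\mapsto\dist(y,\,C)$ is definable since $C$ is definable, and the partial \emph{maximization} $a\mapsto\max_{y\in\Omega_{a,r}}\dist(y,\,C)$ is definable by the maximization analogue of Lemma~\ref{lem:partial_min} (or directly: the graph is the intersection of the solution sets of $\forall y\in\Omega_{a,r},\ \dist(y,\,C)\le\alpha$ and $\forall\epsilon>0,\ \exists y\in\Omega_{a,r},\ \dist(y,\,C)>\alpha-\epsilon$, which is valid on the set where $\Omega_{a,r}\ne\emptyset$; the case $\Omega_{a,r}=\emptyset$, where $\Phi=0$, contributes a definable piece as well). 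Hence $\Phi(\cdot,\,r)$ is definable.

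Now I would invoke the monotonicity lemma (Lemma~\ref{lem:mon}): a definable function of one variable on an interval, near $0$, is eventually either identically $0$ or continuous and strictly monotone. Since $\Phi(\cdot,\,r)$ is nondecreasing with $\Phi(0,\,r)=0$ and $\lim_{t\to0_+}\Phi(t,\,r)=0$, and since a function that is identically $0$ near $0$ is not admissible as an $\RVz$ function but in that degenerate case the $L_i$ would already satisfy a trivial error bound (one can then replace $\psi_B$ by, say, the identity, or handle it separately), the relevant case is that there is $a>0$ on which $\Phi(\cdot,\,r)$ is continuous, positive on $(0,\,a]$, and strictly increasing. The final step is to show such a definable, positive, monotone function near $0$ is regularly varying with some index $\rho\in[0,\,1]$. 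I would reduce to regular variation at infinity via \eqref{eq:rv_rvz} by passing to $g(t)\coloneqq1/\Phi(1/t,\,r)$, which is definable; a standard fact about definable functions (their growth at infinity is ``polynomial-like'': $\lim_{t\to\infty}g(\lambda t)/g(t)$ exists for each $\lambda>0$ and defines a multiplicative homomorphism $\lambda\mapsto\lambda^\rho$, possibly with $\rho=\pm\infty$) yields $g\in\RV_{\rho'}$, hence $\Phi(\cdot,\,r)\in\RVz_\rho$; monotonicity of $\Phi$ forces $\rho\ge0$, and the quasi-nonexpansiveness/nonexpansiveness-type bound $\Phi(t,\,r)\ge\tfrac{t}{2}$ (or a similar linear lower bound, as derived in the proof of Theorem~\ref{theo:rv_old}(iii) using $\norm{x-L_i(x)}\le2\dist(x,\,C)$) forces $\rho\le1$.

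The main obstacle I anticipate is the last reduction: rigorously justifying that a definable function is automatically regularly varying (with index in $[0,\,1]$ after the sign/upper-bound constraints). This is where one must be careful to either cite the appropriate ``definable functions have polynomial-like growth'' statement (related to the monotonicity lemma and the theory of Hardy fields / asymptotics of definable functions) or to reconstruct it: the limit $\lim_{t\to0_+}\Phi(\lambda t,\,r)/\Phi(t,\,r)$ exists for each fixed $\lambda$ because the function $t\mapsto\Phi(\lambda t,\,r)/\Phi(t,\,r)$ is definable and monotone-eventually hence has a (possibly infinite) limit, and the functional equation of limits forces the $\lambda^\rho$ form. Handling the boundary values $\rho=0$ and $\rho=1$ and the rapidly-varying possibilities $\rho=\pm\infty$ (which must be excluded by the sandwiching $\tfrac{t}{2}\le\Phi(t,\,r)\le$ [something sublinear or at worst linear, from continuity and $\Phi(0,r)=0$]) is the delicate bookkeeping, but conceptually it is exactly the same trichotomy already exploited in Theorem~\ref{theo:rv}.
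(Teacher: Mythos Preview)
Your outline matches the paper's proof closely, and you correctly identify the crux and its obstacle. Two places where the paper sharpens what you leave vague.

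First, positivity: rather than handling a possibly-zero $\psi_B$ by replacing it (which would not yield the stated conclusion that $\Phi(\cdot,\,r)$ itself lies in $\RVz_\rho$), the paper simply chooses $r$ large enough that $\B_r$ contains both $P_C(0)$ and some fixed $\hat x\notin C$ (assuming $C\ne\E$; the case $C=\E$ is disposed of with $\psi_B=$ identity). The intermediate value theorem applied to $d(x)=\max_i\norm{x-L_i(x)}$ along the segment $[P_C(0),\,\hat x]\subset\B_r$ then gives $\psi_B(t)>0$ for every $t>0$, so the ``identically zero near $0$'' case never arises.

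Second, the ``functional equation'' step is carried out exactly as you sketch in your obstacle paragraph, but with one additional o-minimal trick you do not spell out. Having applied the monotonicity lemma to the definable ratio $t\mapsto\psi_B(\mu t)/\psi_B(t)$ to get existence of the limit in $[0,1]$ for each $\mu\in(0,1)$ (the bound $[0,1]$ from monotonicity of $\psi_B$), the paper observes that the set $\Lambda_>\subseteq[1,\infty)$ of $\lambda$ for which the limit is strictly positive is itself definable, hence a finite union of intervals and points. Either $\Lambda_>$ contains an interval, and the Characterisation Theorem \cite[Theorem~1.4.1(ii)]{BGT87} gives $\psi_B\in\RVz_\rho$; or $\Lambda_>=\{1\}$ (since $\lambda\in\Lambda_>$ implies $\lambda^k\in\Lambda_>$ for all $k$), forcing $\psi_B\in\RVz_\infty$, which is then excluded by your lower bound $t/2\le\psi_B(t)$ via the $\KRV$ machinery \eqref{eq:krv_mon}--\eqref{eq:krv_power}. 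This dichotomy is the precise content of the ``standard fact'' you invoke.

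Finally, the upper bound you allude to is not needed: monotonicity alone gives $\rho\ge0$ (and rules out $\rho=-\infty$), while the lower bound $t/2\le\psi_B(t)$ alone yields both $\rho\le1$ (via Potter's bounds, as the paper does) and $\rho\ne\infty$.
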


\begin{proof}
	For simplicity, let $C \coloneqq \bigcap_{i=1}^n\F\, L_i$.
	First, we get rid of a trivial case. 
	If $C = \E$ and $B$ is an arbitrary bounded set, the conclusion holds if we take $\psi_B$ to be the identity map restricted to the interval $[0,\infty)$. 
	Henceforth, we next suppose that $C \neq\E$.
	
	Fix any bounded set $B\subseteq\E$.	
	 Then there exists some $r_B > 0$ such that $B\subseteq \B_{r_B}$, where we recall that $\B_{r_B}$ is the  closed ball centered on the origin with radius $r_B$. We choose some $\widehat{x}\notin C$ and let $r$ be such that \[r \geq \max\{r_B,\, \dist(0,\,C),\,\|\widehat{x}\|\}.\]
	Let $\Phi$ be defined as in \eqref{def_best_fun_t} and let 
	\begin{equation}\label{eq:psib:def}
	\psi_{B}(t):=\Phi(t,\,r),\ \ \  t\ge 0.
	\end{equation} We then know from Lemma~\ref{lem:op}~(ii) and (iii) that for any $x\in \B_r$ (in particular, $x\in B$),
	\begin{equation}\label{eb_ineq_t}
	\begin{aligned}
	\dist(x,\,C)  &\le \Phi\Big(\max_{1\le i\le n}\norm{x-L_i(x)},\, \|x\|\Big) \\ 
	&\le  \Phi\Big(\max_{1\le i\le n}\norm{x-L_i(x)},\, r\Big) = \psi_{B}\Big(\max_{1\le i\le n}\norm{x-L_i(x)}\Big).
	\end{aligned}
	\end{equation}
	One can see from Lemma~\ref{lem:op}~(i) that $\psi_B$ is  nondecreasing and satisfies
	\begin{equation*}
	\psi_{B}(0) = \lim_{t\to 0_+}\psi_{B}(t) = \lim_{t\to 0_+}\Phi(t,\,r) = 0.
	\end{equation*}
	It then remains to show that $\psi_{B}|_{(0,\,1]}\in\RVz_{\rho}$ with $\rho\in[0,\,1]$.
	
	First, we show that $\psi_B(t) >0$ holds for any $t > 0$.
	Let $d(x):= \max\limits_{1\le i\le n}\norm{x- L_i(x)}$.
	Notice that $d(P_C(0)) = 0$ and $d(\widehat{x}) > 0$.
	Since all the $L_i$ are continuous, $d$ is continuous as well.  By the intermediate value theorem, for any $t > 0$, there exists some $x_t = \alpha P_C(0) + (1 - \alpha)\widehat{x}$ with $\alpha\in[0,\,1)$ such that $d(x_t) = \min\{t,\,d(\widehat{x})\}$. Moreover, we have
	\begin{equation*}
	\|x_t\| = \left\|\alpha P_C(0) + (1 - \alpha)\widehat{x}\right\| \le \alpha\|P_C(0)\| + (1 - \alpha)\|\widehat{x}\| = \alpha\,\dist(0,\,C) + (1 - \alpha)\|\widehat{x}\| \le r,
	\end{equation*}
	which together with $0 < d(x_t) \le t$ implies that $x_t\not\in C$ and  $x_t\in\Omega_{t,r}$, where $\Omega_{t,r}$ is defined as in Lemma~\ref{lem:op}. As a result, for any $t > 0$ we have
	\begin{equation}\label{1st_form_t}
	\psi_B(t) = \max_{x\in\Omega_{t,r}}\,\dist(x,\, C) > 0.
	\end{equation}
	Next, we  prove $\psi_{B}\in\RVz$. Let
	\begin{equation*}
	h(x,\,t) := \dist(x,\,C) + \delta_{\Omega}(x,\,t)\ \ \ {\rm with}\ \ \ \Omega:=\Big\{(x,\,t)\mid \max_{1\le i\le n}\norm{x-L_i(x)} \le t,\,\|x\| \le r\Big\},
	\end{equation*}
	where $\delta_{\Omega}$ is the indicator function of $\Omega$ and we recall that $r$ is fixed.
	Since all the $L_i$ are definable,  each $\F\,L_i$ is definable\footnote{Note that $\F\,L_i$ is the inverse image of $\{0\}$ of the definable map $L_i-I$, where $I$ is the identity map. Therefore, it is definable, thanks to \cite[B.3]{DM96}.} and thus $C = \bigcap_{i=1}^n\F\,L_i$ is definable. By Lemma~\ref{lem:partial_min} applied to $\norm{x-y}$ and $C$   we see that $\dist(x,\,C)$ is definable, see also \cite[Proposition~2.8]{Hoang16}.
	
	Note that $\Omega$ can be written as the intersection of $n +1$ sets,  each of which is definable\footnote{Again, this can be proved in multiple ways. For example, by observing that the set of $x$ satisfying $\norm{x - L_i(x)} \leq t$ is the inverse image of the inverval $(-\infty,\,0]$ by the definable map $\norm{x - L_i(x)} -t$. }. 
        Consequently, $\Omega$ is definable and $h(x,\,t)$ is definable as well, since it is the sum of two definable functions. From \eqref{1st_form_t} we have $\psi_B(t) = \max_xh(x,\,t)$. Then $\psi_B(t)$ is definable by Lemma~\ref{lem:partial_min}. 
    
    In particular, the restriction $\psi_B$ to the interval $(0,\,1)$ is a definable positive function. 
   	Next, for 
   	every  $\mu\in(0,1)$, we define the function \[\psi_B^{\mu}(t): = \frac{\psi_B(\mu t)}{\psi_B(t)},\] for $t \in (0,\,1)$. 
   	Because compositions and quotients of definable functions are definable, we have  that $\psi_B^{\mu}(t)$ is  definable. By Lemma~\ref{lem:mon}, there is an interval $(0,\,c) \subseteq (0,\,1)$ over which  $\psi_B^{\mu}(t)$ is either strictly monotone  or constant, in particular 
	\begin{equation}\label{key_limit_t}
	\lim_{t\to 0_+}\psi_B^{\mu}(t)  \ \ \mbox{exists in } \R\cup\{-\infty,\, \infty\},
	\end{equation} 
	see also  \cite[Exercise~2.3]{Coste00}.
	Now, we let $f(x):= \psi_B(1/x)$, $x\in [1,\, \infty)$. 
	Then we see from the monotonicity of $\psi_B$ (recall \eqref{eq:psib:def} and Lemma~\ref{lem:op}) that $f$ is  nonincreasing and therefore measurable.  Moreover, for all  $x\ge 1$ and $\lambda \geq 1$ we have
	\begin{equation}\label{key_mono_t}
	0 < \frac{f(\lambda x)}{f(x)} \le 1.
	\end{equation}
	On the other hand, using \eqref{key_limit_t} we obtain for all $\lambda \in (1,\infty)$,
	\begin{equation*}
	\lim_{x\to\infty}\frac{f(\lambda x)}{f(x)}  = \lim_{x\to\infty}\frac{\psi_B\left(1/(\lambda x)\right)}{\psi_B(1/x)} = \lim_{t\to 0_+}\frac{\psi_B\left(\frac1{\lambda} t\right)}{\psi_B(t)} = \lim_{t\to 0_+}\psi_B^{1/\lambda}(t) \ \ \mbox{exists in } \R\cup\{-\infty,\, \infty\}.
	\end{equation*}
	This together with \eqref{key_mono_t} implies that  for all $\lambda\geq 1$, $\lim_{x\to\infty}\frac{f(\lambda x)}{f(x)}$ exists in $\R$.
	Now let $\Lambda_0$ be the set of $\lambda \geq 1$ for which $\lim_{x\to\infty}\frac{f(\lambda x)}{f(x)} = 0$ holds. 	$\Lambda_0$ must be definable because it is the set of $\lambda \geq 1$ satisfying 
	\[
	\forall \epsilon > 0, \exists M > 0, \forall x \geq M \qquad \left|{\frac{f(\lambda x)}{f(x)}}\right| \leq \epsilon.
	\]
	Let $\Lambda_{>}$ be the complement of $\Lambda_0$ intersected with $[1,\,\infty)$. 
	With that, $\Lambda_{>}$ corresponds to the $\lambda \geq 1$ for which $\lim_{x\to\infty}\frac{f(\lambda x)}{f(x)} > 0$ holds. Since definability is preserved by complements and intersections, $\Lambda_{>}$ is definable and, in view of Definition~\ref{def:omin}, must be a finite union of intervals and points.
	
	Next we consider two cases. If $\Lambda_{>}$ contains an interval, then $\lim_{x\to\infty}\frac{f(\lambda x)}{f(x)}$ exists, is finite and positive for all $\lambda$ in a set 
	of positive measure. Since $f$ is measurable, 
	this is enough to conclude 
	that $f\in\RV$, see \cite[Theorem~1.4.1~(ii)]{BGT87}. In view of \eqref{eq:rv_rvz} and the definition of $f$, we 
	conclude that $\psi_B|_{(0,\,1]}\in\RVz_{\rho}$ with some $\rho\in\R$.
	
	If $\Lambda_{>}$ does not contain an interval, then it must be a union of finitely many points. However, one may verify that if $\lambda \in \Lambda_{>}$ then $\lambda^n \in \Lambda_{>}$ for all $n \in \mathbb{N}$, e.g., see \cite[Section~1.4]{BGT87} or this footnote\footnote{It can also be proved by induction by observing that letting $g_{\lambda} \coloneqq \lim _{x \to \infty} \frac{f(\lambda x)}{f(x)}$ and assuming that $g_{\lambda} \in (0,\,\infty)$, we have $\frac{f(\lambda^2 x)}{f(x)} = \frac{f(\lambda^2 x)}{f(\lambda x)} \frac{f(\lambda x)}{f(x)} \to  g_{\lambda}^2$.}.
	So the only way that $\Lambda_{>}$ can be a finite union of points is if $\Lambda_{>} = \{1\}$ holds.
	In this case, we have $\Lambda_0 = (1,\,\infty)$ so $f \in \RV_{-\infty}$ and 
	$\psi_B$ belongs to $\RVz_{\infty}$.

	Overall, we conclude that $\psi_B \in \RVz \cup \RVz_{\infty}$. 
	Our next step is to show that $\psi_B$ cannot be in $\RVz_{\infty}$.
	For that, given any $x$ and every $i$, we let $y \coloneqq P_{C}(x)$ and we see from the quasi-nonexpansiveness of $L_i$ that
	\begin{equation*}
	\norm{x-L_i(x)} = \norm{x - y +y - L_i(x)} \leq 2\norm{x-y} = 2\,\dist(x,\,C).
	\end{equation*}
	This together with \eqref{eb_ineq_t} and $d(x)\coloneqq \max\limits_{1\le i\le n}\norm{x-L_i(x)}$ implies that for all $x\in\B_r$,
	\begin{equation}\label{ex_set_t2}
	\frac{d(x)}{2}\le \dist(x,\,C) \le \psi_B(d(x)).
	\end{equation}
	Recall that $P_C(0)$ and $\widehat{x}$ are in $\B_r$ and they satisfy $d(P_C(0)) = 0$ and $d(\widehat{x}) > 0$, respectively. 
	Since $d(\cdot)$ is a continuous function, $d(\cdot)$ takes all values between 0 and $d(\widehat{x})$ over the ball $\B_r$. This together with \eqref{ex_set_t2} implies that for sufficiently small $t$ we have
	\begin{equation}\label{eq:no_rapid}
	t/2 \le \psi_B(t).
	\end{equation}
	That is, $1/2 \leq \psi_B(t)/t$ holds for all sufficiently small $t$. Or, put 
	otherwise, 
	\[
	\frac{1}{2} \leq \frac{\psi_B(1/x)}{1/x} = \frac{f(x)}{x^{-1}}
	\]
	holds for all sufficiently large $x > 0$.
	This implies that $x^{-1}\frac{1}{f(x)} \leq 2$ for $x$ sufficiently large. 
	For the sake of obtaining a contradiction, suppose that $\psi_B \in \RVz_{\infty}$.
	Then $1/f \in \RV_{\infty}$ and is nondecreasing, since $f$ is nonincreasing. With 
	that $1/f \in \KRV$ by  \eqref{eq:krv_mon}. 
	Therefore, the function that maps $x$ to $x^{-1}\frac{1}{f(x)}$ is in $\RV_{\infty}$, by \eqref{eq:krv} and \eqref{eq:krv_power}. However this implies that $x^{-1}\frac{1}{f(x)} \to \infty $ as 
	$x\to \infty$, e.g., see \eqref{eq:potter_rapid2}.
	This is a contradiction and it shows that $\psi_B$ cannot be rapidly varying, 
	so it must be in $\RVz_{\rho}$ for some $\rho \in \R$.

	For the last part, we will prove that  $\rho\in[0,\,1]$ holds. First, for $\lambda > 1$, we see from the monotonicity of $\psi_B$ that $\lambda^{\rho} = \lim_{t\to 0_+}\psi_B(\lambda t)/\psi_B(t) \ge 1$, which proves that $\rho \ge 0$.
	
	It remains to show that $\rho \le 1$. Suppose to the contrary that $\rho > 1$. We let $\delta \in(0,\,\rho - 1)$ and conclude from Potter's bound \eqref{eq:potter} (set $A = 2$) that for sufficiently small $t$ and $s$,
	\begin{equation}\label{potter_use_t}
	\psi_B(t) \le 2\psi_B(s)\max\Big\{\Big(\frac{t}{s}\Big)^{\rho - \delta},\, \Big(\frac{t}{s}\Big)^{\rho + \delta}\Big\}.
	\end{equation}
	In view of \eqref{eq:no_rapid} and \eqref{potter_use_t}, the following inequalities hold for sufficiently small $t$ and $s$,
	\begin{equation*}
	\frac{1}{2}\le \frac{\psi_B(t)}{t} \le 2\psi_B(s)\max\Big\{t^{\rho - \delta - 1}\Big(\frac{1}{s}\Big)^{\rho - \delta},\, t^{\rho + \delta -1}\Big(\frac{1}{s}\Big)^{\rho + \delta}\Big\}.
	\end{equation*}
	If we fix $s$ and let $t\to 0_+$, the right-hand side will converge to 0 (because $\rho - \delta - 1 > 0$), which gives a contradiction. Consequently, we must have $\rho \le 1$. This completes the proof.
\end{proof}
Theorem~\ref{theo:cvo} has some useful corollaries that we will discuss next.
The first is that in the setting of quasi-cyclic algorithms  as in \eqref{alg_iter}, we can always obtain joint Karamata regularity.
\begin{corollary}\label{col:quasi}
Let $T_i:\E \to \E$ $(i = 1,\ldots,m)$ be $\alpha$-averaged operators as in problem \eqref{fcp_prob}.
If $T_i$ are definable in some {o-minimal structure}, then they  are jointly Karamata regular (\emph{resp.}  Karamata regular when $m = 1$) over any bounded set $B\subseteq\E$.
\end{corollary}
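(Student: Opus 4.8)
The plan is to recognize that Corollary~\ref{col:quasi} is an immediate specialization of Theorem~\ref{theo:cvo}, so the only real work is verifying that the operators $T_i$ from problem~\eqref{fcp_prob} meet the hypotheses of that theorem, namely: continuity, quasi-nonexpansiveness, nonemptiness of the common fixed point set, and definability.

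First I would recall that, by the standing assumptions on \eqref{fcp_prob}, each $T_i$ ($i=1,\ldots,m$) is $\alpha$-averaged for some $\alpha\in(0,\,1)$, so that $T_i=(1-\alpha)I+\alpha R_i$ for a nonexpansive operator $R_i$. As already observed right after \eqref{fcp_prob} (see \cite[Remark~4.24]{BC11}), this makes each $T_i$ nonexpansive, hence Lipschitz continuous. Next, nonexpansiveness yields quasi-nonexpansiveness with essentially no argument: for any $x\in\E$ and $y\in\F\,T_i$ we have $\norm{T_i(x)-y}=\norm{T_i(x)-T_i(y)}\le\norm{x-y}$. Finally, $F=\bigcap_{i=1}^m\F\,T_i$ is nonempty by the hypothesis imposed on \eqref{fcp_prob}, and the $T_i$ are definable in some $o$-minimal structure by the hypothesis of the corollary.

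Having checked all four conditions, I would invoke Theorem~\ref{theo:cvo} with $L_i\coloneqq T_i$ and $n\coloneqq m$ to conclude that $T_1,\ldots,T_m$ are jointly Karamata regular (resp.\ Karamata regular when $m=1$) over any bounded set $B\subseteq\E$; concretely, the regularity function $\psi_B$ can be taken to be $\Phi(\cdot,\,r)$ from Lemma~\ref{lem:op} for sufficiently large $r$, which restricts to an $\RVz_\rho$ function with $\rho\in[0,\,1]$ on $(0,\,1]$. There is no genuine obstacle here; the only item worth flagging is that Theorem~\ref{theo:cvo} is phrased for quasi-nonexpansive (rather than nonexpansive) operators, which is precisely the mild strengthening that the $\alpha$-averaged structure of the $T_i$ supplies for free.
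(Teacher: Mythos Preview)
Your proposal is correct and follows exactly the same approach as the paper: verify that $\alpha$-averaged operators are continuous and quasi-nonexpansive, note that the common fixed point set is nonempty by assumption, and then invoke Theorem~\ref{theo:cvo} with $L_i\coloneqq T_i$ and $n\coloneqq m$. The paper's proof is essentially a two-line version of what you wrote, omitting the explicit verification that nonexpansiveness implies quasi-nonexpansiveness.
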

\begin{proof}
Each $T_i$ is $\alpha$-averaged, so, in particular, continuous and quasi-nonexpansive. Moreover,  $\bigcap_{i=1}^m\F\, T_i \neq \emptyset$ by assumption. 
Then applying Theorem~\ref{theo:cvo} with $L_i = T_i$ and $n = m$, we conclude that $T_i$ $(i = 1,\ldots,m)$ are jointly Karamata regular (\emph{resp.}  Karamata regular when $m = 1$) over any bounded set $B\subseteq\E$.
\end{proof}
As a consequence of Corollary~\ref{col:quasi}, whenever the problem is definable in some $o$-minimal structure, we can in principle use Theorem~\ref{theo:rv} and Theorem~\ref{theorem_int}  to analyze convergence rates of algorithms for solving the common fixed point problem \eqref{fcp_prob}.

Another consequence of Theorem~\ref{theo:cvo} is that the error bounds that describe intersections of definable convex sets can be taken to be regularly varying, which is a result we were not aware when \cite{LL20} was written. In what follows, we will use the notion of consistent error bound see Remark~\ref{rmk_def} and \cite{LL20}.
\begin{corollary}\label{col:consistent}
Let $C_1,\ldots, C_m \subseteq \E$ be closed convex sets definable in some $o$-minimal structure with nonempty intersection. Then, there exists a consistent error bound function $\Phi$ for $C_1,\ldots, C_m$ such that for all sufficiently large $r > 0$, we have $\Phi(\cdot,\, r) \in \RVz_{\rho}$ with index $\rho \in [0,\,1]$ when restricted to $(0,\,1]$ ($\rho$ may depend on $r$). In particular, for every bounded set $B \subseteq \E$, there exists  a nondecreasing function $\psi_B \in \RVz_{\rho}$ with $\rho \in [0,\,1]$ when restricted to $(0,\,1]$ and $\lim_{t\to0_+}\psi_B(t) = 0$ such that 
\begin{equation*}
\dist\Big(x,\,\bigcap_{i=1}^mC_i\Big) \le \psi_B\Big(\max_{1\le i\le m}\dist(x,\,C_i)\Big), \ \ \forall\ x\in B.
\end{equation*}
\end{corollary}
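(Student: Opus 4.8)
The plan is to deduce Corollary~\ref{col:consistent} from Theorem~\ref{theo:cvo} by taking the $L_i$ to be the projection operators onto the $C_i$'s and then relating the regularity function $\psi_B$ produced by that theorem to a genuine consistent error bound function. First I would observe that each $L_i \coloneqq P_{C_i}$ is well-defined, single-valued, continuous (indeed nonexpansive) and quasi-nonexpansive, since the $C_i$ are closed and convex; moreover $\F\,L_i = C_i$, so $\bigcap_{i=1}^m \F\,L_i = \bigcap_{i=1}^m C_i \neq \emptyset$. The remaining hypothesis of Theorem~\ref{theo:cvo} that must be checked is definability of each $P_{C_i}$ in the same $o$-minimal structure. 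This is where Lemma~\ref{lem:partial_min} enters: the projection $P_{C_i}(x)$ is characterized by $z = P_{C_i}(x) \iff z \in C_i$ and $\norm{x-z} = \inf_{y \in C_i}\norm{x-y}$, and since $C_i$ is definable and $(x,y)\mapsto \norm{x-y}$ is definable, the function $x \mapsto \dist(x,C_i)$ is definable by Lemma~\ref{lem:partial_min}; the graph of $P_{C_i}$ is then the definable set $\{(x,z)\mid z\in C_i,\ \norm{x-z}-\dist(x,C_i)=0\}$, expressible via the first-order principle \eqref{eq:1storder}. Hence each $P_{C_i}$ is definable.

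Having verified the hypotheses, Theorem~\ref{theo:cvo} gives, for any bounded $B$, that $P_{C_1},\ldots,P_{C_m}$ are jointly Karamata regular over $B$ with regularity function $\psi_B(\cdot) = \Phi(\cdot,\,r)$, where $\Phi$ is the function from Lemma~\ref{lem:op} associated to the operators $P_{C_i}$ and $r$ is sufficiently large (larger than $r_B$, $\dist(0,C)$, and $\|\widehat x\|$ for some fixed $\widehat x \notin C$); moreover $\Phi(\cdot,r)|_{(0,1]} \in \RVz_\rho$ with $\rho \in [0,1]$. The key point is that this very $\Phi$ from Lemma~\ref{lem:op} \emph{is} a consistent error bound function in the sense of Remark~\ref{rmk_def}~(iii): Lemma~\ref{lem:op}~(i)--(iii) give exactly that $\Phi$ is nondecreasing in each argument, satisfies $\lim_{a\to 0_+}\Phi(a,b) = \Phi(0,b) = 0$ for all $b\ge 0$, and obeys $\dist(x,\bigcap_i C_i) \le \Phi(\max_i \dist(x,C_i),\|x\|)$ for all $x\in\E$ (using $\norm{x-L_i(x)} = \dist(x,C_i)$). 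So the first sentence of the corollary is immediate: this $\Phi$ works, and for every sufficiently large $r$, $\Phi(\cdot,r)|_{(0,1]} \in \RVz_\rho$ with $\rho$ depending on $r$ — note that applying Theorem~\ref{theo:cvo} with different bounded sets $B$ forces us to use different $r$, but the single function $\Phi$ is fixed, and the conclusion about $\Phi(\cdot,r)$ for large $r$ follows because any such $r$ is admissible for some ball $B = \B_{r}$.

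For the ``in particular'' clause: fix a bounded set $B$, pick $r$ large enough that $B \subseteq \B_r$ and $r$ is admissible in the sense of Theorem~\ref{theo:cvo}, and set $\psi_B(\cdot) \coloneqq \Phi(\cdot,r)$. Then $\psi_B$ is nondecreasing and $\lim_{t\to 0_+}\psi_B(t) = \psi_B(0) = 0$ by Lemma~\ref{lem:op}~(i), $\psi_B|_{(0,1]} \in \RVz_\rho$ with $\rho \in [0,1]$ by Theorem~\ref{theo:cvo}, and the error bound
\[
\dist\Big(x,\,\bigcap_{i=1}^m C_i\Big) \le \Phi\Big(\max_{1\le i\le m}\dist(x,C_i),\,\|x\|\Big) \le \Phi\Big(\max_{1\le i\le m}\dist(x,C_i),\,r\Big) = \psi_B\Big(\max_{1\le i\le m}\dist(x,C_i)\Big)
\]
holds for all $x \in B$, using monotonicity of $\Phi$ in its second argument and $\|x\| \le r$ on $B$. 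I do not expect a serious obstacle here; the only thing requiring a little care is the definability of $P_{C_i}$, which must be argued through $\dist(\cdot,C_i)$ and the first-order formula machinery rather than taken for granted, and the bookkeeping of which $r$ is ``sufficiently large'' — this is exactly the subtlety that the phrase ``$\rho$ may depend on $r$'' is flagging, since Theorem~\ref{theo:cvo} does not claim a uniform index across all bounded sets.
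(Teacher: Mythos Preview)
Your proposal is correct and follows essentially the same route as the paper's own proof: apply Theorem~\ref{theo:cvo} with $L_i = P_{C_i}$, observe that the $\Phi$ of Lemma~\ref{lem:op} is then precisely a consistent error bound function for the $C_i$, and read off the $\RVz$ conclusion and the error bound for $\psi_B \coloneqq \Phi(\cdot,r)$. You actually supply more detail than the paper does in verifying that $P_{C_i}$ is definable (via definability of $\dist(\cdot,C_i)$ and the first-order description of the graph), which is a hypothesis the paper's proof tacitly assumes.
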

\begin{proof}
Apply Theorem~\ref{theo:cvo} by taking $m = n$ and $L_i$ to be the projection operator onto $C_i$. With that the $\Phi$ in Lemma~\ref{lem:op} becomes a consistent error bound function for the $C_1, \ldots, C_m$. Then we see from  Theorem~\ref{theo:cvo} that for sufficiently large $r > 0$, we have $\Phi(\cdot,\,r)  \in \RVz_{\rho}$ with $\rho \in [0,\,1]$ when restricted to $(0,\,1]$.
Finally, let $B$ be any bounded set and let $b$ be large enough so that $\sup _{x \in B} \norm{x} \leq b$ and $\Phi(\cdot,\,b)  \in \RVz_{\rho}$ with $\rho \in [0,\,1]$ when restricted to $(0,\,1]$. Define $\psi_B \coloneqq \Phi(\cdot,\, b)$. Then it follows from Lemma~\ref{lem:op}~(i) that $\psi_B$ is nondecreasing and it satisfies $\lim_{t\to0_+}\psi_B(t) = 0$.
Moreover, for every $x \in B$, letting $d(x)  \coloneqq \max_{1\le i\le m}\dist(x,\,C_i)$ we have from Lemma~\ref{lem:op}~(iii) that
\begin{equation*}
\dist\Big(x,\,\bigcap_{i=1}^mC_i\Big) \le \Phi(d(x),\,\norm{x}) \le
\Phi(d(x),\,b) =\psi_B(d(x)).
\end{equation*}
This completes the proof.
\end{proof}
Corollary~\ref{col:consistent} implies that the results of this paper and \cite{LL20} can be used to analyze several different types of algorithms for feasibility problems over definable convex sets.

\paragraph{Polynomially bounded $o$-minimal structures.}
The preceding results show that, under suitable assumptions, the $T_i$'s as in \eqref{fcp_prob}  are jointly Karamata regular and the corresponding regularity function can be taken as described in Theorem~\ref{theo:cvo}.
We also know that the index of regular variation is some $\rho \in [0,1]$. 

In view of Theorem~\ref{theo:rv}, if $\rho = 0$ we are somewhat out of luck, as item~$(i)$ only gives a lower bound to an upper bound. 
In order to extract more information in the $\rho = 0$ case, we need extra assumptions as in item~$(i)$ of Theorem~\ref{theorem_int} which was used to analyze convergence rates under logarithmic error bounds in Section~\ref{sec:previous}.

On the other hand, if $\rho \in (0,1]$ holds then we know from items~(ii) and (iii) of Theorem~\ref{theo:rv} that the convergence rate should be at least sublinear. To conclude this paper, we take a look at conditions ensuring that $\rho \in (0,1]$ holds.

There is a result called \emph{growth dichotomy} that states that a $o$-minimal structure either contains the graph of the exponential function or it is \emph{polynomially bounded}.
Here, \emph{polynomially bounded} means that for every definable function $f:\mathbb{R} \to \mathbb{R}$ there exists some natural number $N$ such that $|f(x)| \leq x^N$ holds for sufficiently large $x$, see \cite{Miller94}, \cite[Section~4.12]{Dri98}.

The $o$-minimal structures associated to semialgebraic sets and globally subanalytic sets are both polynomially bounded, e.g., see  \cite[Proposition~2.6.1]{BCR98} and Sections 5.3 and 5.4 of \cite{DM96} in view of the growth dichotomy, respectively. 
On the other hand, no $o$-minimal structure containing the exponential cone as a definable set is polynomially bounded, again by the growth dichotomy.

For polynomially bounded $o$-minimal structures we have the following lemma.
\begin{lemma}[\cite{Miller94}]\label{lem:dichotomy}
	Let $f: \R \to \R$ be definable in a polynomially bounded $o$-minimal structure. Then either $f(x)$ is zero for all sufficiently large $x$ or there exist $c,r \in \R$, $c\neq 0$ such that $\frac{f(x)}{x^r} \to c$ as $x\to +\infty$. 
\end{lemma}
This  implies the following result.
\begin{lemma}\label{lem:dichotomy2}
Let $g: \R_+ \to \R_+$ be definable in a polynomially bounded $o$-minimal structure. Suppose that $g(t) > 0$ for all sufficiently small positive $t$ and $\lim_{t\to 0_+} g(t) = 0$. Then, there exist $c > 0$ and $r \neq 0$ such that  $\frac{g(t)}{t^r} \to c$ as $t\to 0_+$. 
\end{lemma}
\begin{proof}
Suppose that $g(t) > 0$ for all $0 < t \leq t_0$.
Then, we define $f: \R \to \R$ such that $f(x) \coloneqq g(1/x)$ if $x \geq 1/t_0$ and $f(x) \coloneqq 0$ otherwise.
Since $g$ is definable, $f$ is definable as well\footnote{The union of finitely many definable sets is definable and the graph of $f$ is the union $\{(x,g(1/x)) \mid x \geq 1/t_0 \} \cup \{(x,0) \mid x < 1/t_0\}$. The first set in the union is definable because $g$ is definable.} and Lemma~\ref{lem:dichotomy2} implies the existence of $c,r \in \R$, $c \neq 0$ such that $\frac{f(x)}{x^r} \to c$ as $x\to +\infty$. 
This implies that $\frac{g(t)}{t^{-r}} \to c$ as $t\to 0_+$.

Because $g(t)$ is positive for small $t$, we have $c > 0$. Finally, $\lim_{t\to 0_+} g(t) = 0$ together with $c > 0$ tells us that $r \neq 0$.
\end{proof}

We can now refine Theorem~\ref{theo:cvo} in the polynomially bounded case.
\begin{theorem}\label{theo:krpoly}
Consider the setting of Theorem~\ref{theo:cvo} and suppose that the underlying $o$-minimal structure is polynomially bounded. Then, for sufficiently large $r > 0$, we have $\Phi(t, r) \overset{c}{\sim} t^\rho$ as $t \to 0_+$, with $\rho \in (0,1]$. In particular,  $\Phi(\cdot,\, r)\in \RVz_{\rho}$ with index $\rho \in (0,\,1]$ when restricted to $(0,\,1]$ ($\rho$ may depend on $r$). 
\end{theorem}
\begin{proof}
	We go back to the proof of Theorem~\ref{theo:cvo} and consider the same choice of $r$.
	Then, the function $\psi_B$ is such that $\psi_B(t) = \Phi(t,r)$ holds for $t \geq 0$. 
	Furthermore,
	from Lemma~\ref{lem:op}, we have
$\lim _{t \to 0_+} \psi_B(t) = 0$.
	Later in the proof it is shown that $\psi_B$ is definable and that $\psi_B(t) > 0$ holds for all $t > 0$, see \eqref{1st_form_t}.
	At the end, it is shown that $\psi_B \in \RVz_{\rho}$ with $\rho \in [0,1]$.
	
	Since the $o$-minimal structure is polynomially bounded, we apply Lemma~\ref{lem:dichotomy2} and conclude that there exists $c > 0$ and $\alpha \neq 0$ such that  
	$\Phi(t, r) = \psi_B(t)\overset{c}{\sim} t^\alpha$ as $t \to 0_+$, which implies that the index of regular variation of $\Phi(\cdot,r)$ is $\alpha$. Therefore, $\rho = \alpha$ and $\rho \neq 0$.
\end{proof}
Essentially, Theorem~\ref{theo:krpoly} tells us that if we are in a polynomially bounded $o$-minimal structure, the regularity functions can be assumed to be H\"olderian. 
This ensures that convergence rates are at least sublinear.

\begin{corollary}\label{col:poly_conv}
Under the setting and assumptions of Theorem~\ref{theo:rate}, suppose in addition that the $T_i$'s are all definable in a polynomially bounded $o$-minimal structure.
Then, either $\{x^k\}$ converges to some $x^*\in\bigcap_{i=1}^m\F\,T_i$ finitely or there exists some $r > 0$, $\kappa > 0$ such that convergence rate is given by 
\begin{equation*}
\dist(x^k,\,F) \leq \kappa k^{-r}  \ \ {\rm as}\ \ k\to+\infty.
\end{equation*}
\end{corollary}
\begin{proof}
By Theorem~\ref{theo:krpoly}, the regularity function in item~$(a)$ of Theorem~\ref{theo:rate} can be taken to satisfy $\psi_B(t) \overset{c}{\sim} t^\rho$ as $t \to 0_+$ with $\rho \in (0,1]$. 
Following the same arguments  that lead to 
\eqref{eq:holder_rate}, we conclude that $\phi$ in Theorem~\ref{theo:rate} also satisfies $\phi(t) \overset{c}{\sim} t^\rho$ as $t \to 0_+$.
Furthermore if $\rho \in (0,1)$, we have
\begin{equation}
\sqrt{\Phi_{\phi}^{-1}(k)} \overset{c}{\sim} k^{-\frac{\rho}{2(1-\rho)}} \ \ {\rm as}\ \ k\to\infty,
\end{equation}
which, in view of \eqref{gener_rate}, implies the desired result. If $\rho = 1$, we have a linear rate which, similarly, also implies the result.%
\end{proof}
The results in \cite[Section~4]{BLT17} imply that if $C,D$ are two convex semialgebraic sets with nonempty intersection, then the Douglas-Rachford algorithm converges at least sublinearly, see Corollary~4.1 therein.
Corollary~\ref{col:poly_conv} allow us to extend this to the polynomially bounded case.

\begin{corollary}
Suppose that $C,D \subseteq \E$ are convex sets definable in a polynomially bounded $o$-minimal structure such that $C\cap D \neq \emptyset$. Let $\{x^k\}$ be a sequence generated by the DR algorithm.  Then either $x^k$ converges to some $x^*\in \F\,T_{C,D}$ finitely or  there exist $ r > 0$, $\kappa > 0$ such that
\begin{equation*}
\dist(x^k,\F\,T_{C,D}) \leq \kappa k^{-r}  \ \ {\rm as}\ \ k\to\infty.
\end{equation*}
\end{corollary}
\begin{proof}
If $C,D$ are definable, then the projections $P_C, P_D$ are also definable in the same $o$-minimal structure. This implies that the DR operator is also definable. 
As discussed in Section~\ref{sec:dr}, the DR algorithm can be obtained from the quasi-cyclic iteration described in  \eqref{alg_iter} by  letting $m = 1$, $T_1 = T_{C,D}$ and $w_i^k \equiv 1$ (thus $\nu = 1$). With that, item $(b)$ of Theorem~\ref{theo:rate} is satisfied with $s = 1$. 
We can then apply Corollary~\ref{col:poly_conv} to obtain the result.
\end{proof}

Due to the growth dichotomy, the price to pay for imposing polynomial boundedness is that we can no longer capture certain phenomena involving exponentials and logarithms as in Sections~\ref{sec:ap}, \ref{sec:dr} and \cite{LL20,LLP20,LLLP24}. 
In the presence of exponentials and related functions, as discussed in Section~\ref{sec:examples}, we may still use Theorem~\ref{theorem_int} if we have enough information about the underlying regularity function or Theorem~\ref{theo:rv_old} if we are able to deduce through other considerations that the index is nonzero.

\appendix
\section{Conic representation of $C_1$ in \eqref{def_C1C2}}\label{app:rep}

First, we observe that  $C_1$ satisfies
\begin{equation}\label{C1_rew}
C_1 = \left\{(x,\,\mu) \mid \gamma(x) \leq \min\{\mu,\, \gamma(0.5)\}\right\}.
\end{equation}
Let $\widehat{C}_1$ be defined as
\begin{equation}\label{eq:wideC1}
\begin{split}
\widehat{C}_1 \coloneqq & \left\{(x,\,\mu,\,s,\,v,\,t)\mid |x|\le t,\, t/2 \le -v\ln(v),\, 0 < v \le \sqrt{s},\, 0 < s \le \min\{\mu,\, \gamma(0.5)\} \right\}\\
& \cup \left\{(x,\,\mu,\,s,\,v,\,t)\mid x = 0,\, v = 0,\, t = 0,\, 0 \le s \le \min\{\mu,\, \gamma(0.5)\}  \right\}.
\end{split}
\end{equation}
The following lemma establishes the connection between $C_1$ and $\widehat{C}_1$.
\begin{lemma}\label{lem:reform}
	$(x,\,\mu)\in C_1$ if and only if there exist $s,v,t\in\R$ such that  $(x,\,\mu,\,s,\, v,\, t)\in \widehat{C}_1$.
\end{lemma}

\begin{proof}
	Suppose that $(x,\,\mu)\in C_1$. We let $s\coloneqq \min\{\mu,\,\gamma(0.5)\}$, $v\coloneqq \sqrt{s}$ and $t\coloneqq |x|$. In case of $\mu = 0$, we have $s =0$ and see from $\gamma(x) \le \min\{\mu,\, \gamma(0.5)\} = 0$ that $x = 0$. Thus, $v = 0$ and $t = 0$. Consequently, we have $(x,\,\mu,\,s,\, v,\, t)\in \widehat{C}_1$. In case of $\mu > 0$, we have $0 < s = \min\{\mu,\, \gamma(0.5)\} $ and $0 < v = \sqrt{s}$. Moreover, note that 
	\begin{equation*}
	\gamma(|x|) = \gamma(x) \le \min\{\mu,\, \gamma(0.5)\} = s \le \gamma(0.5).
	\end{equation*}
	This further implies that
	\begin{equation*}
	t = |x| \le \gamma^{-1}(s) = -\sqrt{s}\ln(s) = -2v\ln(v),
	\end{equation*}
	which proves that $(x,\,\mu,\,s,\, v,\, t)\in \widehat{C}_1$.
	
	Conversely, suppose that $(x,\,\mu,\,s,\, v,\, t)\in \widehat{C}_1$. In case of $v = 0$, then $(x,\,\mu,\,s,\, v,\, t)$ must fall into the second piece of $\widehat{C}_1$ and thus we have $\gamma(x) = \gamma(0) = 0 \le s \le \min\{\mu,\, \gamma(0.5)\}$, which proves that $(x,\,\mu)\in C_1$. In case of $v > 0$, then $(x,\,\mu,\,s,\, v,\, t)$ must fall into the first piece of $\widehat{C}_1$. Consequently, we have 
	\begin{equation*}
	t\le f(v)\coloneqq -2v\ln(v),\, 0 < v\le \sqrt{s}  \le \sqrt{\gamma(0.5)} < e^{-1}.
	\end{equation*}
	Note that $f$ is monotone increasing on $(0,\,e^{-1})$. Therefore, we have 
	\begin{equation*}
	|x|\le t\le f(\sqrt{s}) = -\sqrt{s}\ln(s) = \gamma^{-1}(s),
	\end{equation*}
	which further implies that
	\begin{equation*}
	\gamma(x) = \gamma(|x|) \le s \le \min\{\mu,\, \gamma(0.5)\}. 
	\end{equation*}
	This proved that $(x,\,\mu)\in C_1$ and completes the proof.
\end{proof}

\begin{proof}[Proof of Proposition~\ref{prop:conic_rep}]
	In view of Lemma~\ref{lem:reform} it is enough to show that 
	$(x,\mu,s,v,t) \in \widehat C_1$ if and only if $x,\mu,s,v,t$ satisfy \eqref{eq:conic_rep}.
	
	First, we assume that $(x,\mu,s,v,t) \in \widehat C_1$.
	Suppose that $(x,\mu,s,v,t) $ belongs to the first piece of $\widehat{C}_1$ in \eqref{eq:wideC1}. 
	Then, $0<v \leq \sqrt{s}$ implies that $(0.5,s,v) \in \mathcal{Q}_r^3$, while $t/2 \leq -v\ln(v)$ together with $v > 0$ imply that $(t/2,v,1) \in \expCone$. 
	Overall, $(x,\mu,s,v,t) $ satisfies \eqref{eq:conic_rep}.
	Next, suppose that the  $(x,\mu,s,v,t) $ belongs to the second piece of $\widehat{C}_1$ in \eqref{eq:wideC1}, so that 
	$x =  v = t = 0$. Since 
	$(0,0,1) \in \expCone$ and $(0.5,s,0) \in \mathcal{Q}_r^3$ for $s \geq 0$, we conclude that 
	$(x,\mu,s,v,t) $ satisfies \eqref{eq:conic_rep}.
	
	Conversely, suppose that $(x,\mu,s,v,t) $ satisfies \eqref{eq:conic_rep}.
	Then $(t/2,v,1) \in \expCone$ implies that either $v > 0$ and
	$t/2 \leq -v\ln(v)$ hold or 
	$v = 0$ and $t \leq 0$ hold. 
	In the former case, $(x,\mu,s,v,t) $ belongs to the first piece of $\widehat{C}_1$. 
	In the latter case, in view of the other constraints in \eqref{eq:conic_rep} we have 
	$x= v= t= 0$ and 	$(x,\mu,s,v,t) $ belongs to the 
	second piece of $\widehat{C}_1$.
\end{proof}


{\small{
		\subsection*{Acknowledgements}
		We thank the referees and the editors for their  comments, which helped to improve the paper.
}}

\bibliographystyle{abbrvurl}
\bibliography{bib_plain}
%
%
%
%
%
%
%
%
%

\end{document}